\theoremstyle{plain}
\newtheorem{Main}{Theorem}
\newtheorem{theorem}{Theorem}[section]
\newtheorem{proposition}[theorem]{Proposition}
\newtheorem{lemma}[theorem]{Lemma}
\newtheorem*{claim*}{Claim}
\theoremstyle{remark}
\newtheorem{remark}[theorem]{Remark}
\newtheorem{definition}[theorem]{Definition}
\numberwithin{equation}{section}
\def\eg{{\em e.g.\ }}
\newfont\bbf{msbm10 at 12pt}
\def\c{{\bf C}}
\def\eps{\varepsilon}
\def\R{{\mathbb R}}
\def\N{{\mathbb N}}
\def\Z{{\mathbb Z}}
\DeclareMathOperator{\leb}{Leb}
\def\lip{\mbox{Lip}}
\def\graph{\mbox{graph}}
\def\loc{\mbox{\tiny loc}}
\def\qand{\quad\mbox{and}\quad}
\def\supp{\mbox{\rm supp}}
\def\diam{\mbox{\rm diam} }
\def\le{\leqslant}
\def\ge{\geqslant}
\newdimen\AAdi%
\newbox\AAbo%
\def\AArm{\fam0 }
\def\AAk#1#2{\setbox\AAbo=\hbox{#2}\AAdi=\wd\AAbo\kern#1\AAdi{}}%
\def\AAr#1#2#3{\setbox\AAbo=\hbox{#2}\AAdi=\ht\AAbo\raise#1\AAdi\hbox{#3}}%
\def\BBone{{\AArm 1\AAk{-.8}{I}I}}%
\newcommand {\CA}{{\mathcal A}}
\newcommand {\CC}{{\mathcal C}}
\newcommand {\CF}{{\mathcal F}}
\newcommand {\CR}{{\mathcal R}}
\newcommand {\CS}{{\mathcal S}}
\newcommand {\CT}{{\mathcal T}}
\newcommand{\disp}{\displaystyle}
\newcommand{\8}{\infty}
\def\ju{J^u}
\def\lu{\lambda^u}
\def\ls{\lambda^s}
\def\logju{\log\ju}
\def\m1{{-1}}
\newcommand{\ninf}{{n\rightarrow\8}}
\newcommand{\ol}{\overline}
\def\S{\Sigma}
\def\s{\sigma}
\def\supp{\mbox{supp}\,}
\newcommand{\wh}{\widehat}
\newcommand{\pardef}{\stackrel{def}{=}}
\newcommand{\ul}[1]{\underline{#1}}
\newcommand{\w}{\mathbf{w}}
\begin{document}
\synctex=1
\floatplacement{figure}{H}
\title{SRB measures for almost Axiom A diffeomorphisms}
\author{Jos\'e F. Alves}
\address{Jos\'e F. Alves\\ Departamento de Matem\'atica, Faculdade de Ci\^encias da Universidade do Porto\\
Rua do Campo Alegre 687, 4169-007 Porto, Portugal}
\email{jfalves@fc.up.pt} 
\urladdr{http://www.fc.up.pt/cmup/jfalves}

\author{Renaud Leplaideur}
\address{Renaud Leplaideur\\ LMBA UMR 6205,
Universit\'e de Brest\\
6 Av. Victor Le Gorgeu, C.S. 93837, 29238 Brest cedex 3,
France}
\email{Renaud.Leplaideur@univ-brest.fr}
\urladdr{http://www.lmba-math.fr/perso/renaud.leplaideur}

\date{\today}

\thanks{JFA was partly supported by Funda\c c\~ao Calouste Gulbenkian, by CMUP, by the European Regional Development Fund through the Programme COMPETE and by  FCT under the projects PTDC/MAT/099493/2008 and PEst-C/MAT/UI0144/2011. RL was partly supported by ANR DynNonHyp and Research in Pair by CIRM}

\subjclass[2010]{37D25, 37D30, 37C40}

\keywords{Almost Axiom A, SRB measure}

\maketitle

\begin{abstract}  We consider a diffeomorphism $f$ of a compact manifold $M$ which is  {\it
Almost Axiom A}, i.e. $f$ is hyperbolic in a neighborhood
of some compact $f$-invariant set, except in some
singular set of neutral points.
We  prove that if there exists some
$f$-invariant set of hyperbolic points with
positive {\it unstable}-Lebesgue measure
such that for every point in this set the stable and unstable leaves are
``long enough'', then $f$
admits  a probability  SRB measure.
\end{abstract}

\tableofcontents

\section{Introduction}
\subsection{Background}
The goal of this paper is to improve results from \cite{leplaideur-aaa}. It addresses the question of SRB measures for non-uniformly hyperbolic dynamical systems. 
We remind that SRB measures are special \emph{physical} measures, which are measures with observable generic points:

For a smooth dynamical system $(M,f)$, meaning that $M$ is a
compact smooth Riemannian manifold  and $f$ is a $C^{1+}$
diffeomorphism acting on $M$,  we recall that the set
$G_\mu$ of generic points for a $f$-invariant ergodic
probability measure on $M$ $\mu$, is the set of points $x$ such
that for every continuous function $\phi$,
\begin{equation}
\lim_\ninf\frac{1}{n}\sum_{k=0}^{n-1}\phi\circ f^k(x)=\int
\phi\,d\mu. \label{equ1}
\end{equation}
This set has full $\mu$ measure. Though holding for a big set of points in terms of $\mu$ measure, this convergence can be actually observed only when this set $G_\mu$ has strictly positive measure with respect to the volume of the manifold, that is with respect to the Lebesgue measure on $M$. This volume measure will be denoted by $\leb_M$. A measure $\mu$  is said to be  \emph{physical} if $\leb_{M}(G_{\mu})>0$.

The conditions yielding the existence of physical measures for non-uniformly hyperbolic systems has been studied a lot since the 90's (see \eg \cite{young-benedicks,Hu,Viana-Bonatti,alves-bonatti-viana}) but still remains not entirely solved. 
The main reason for that is that physical measure are usually produced under the form of Sinai-Ruelle-Bowen (SRB) measures and 
there is no general theory to construct the particular Gibbs states that are these SRB measures.  One of the explanation of the lack of general theory is probably the large number of ways 
to degenerate the uniform hyperbolicity. 

In   \cite{leplaideur-aaa}, the author studied a way where the loss of hyperbolicity was due to a critical set $S$ where there was no expansion and no contraction even if the general ``hyperbolic'' splitting  with good angles was still existing on this critical set. 
Moreover, the non-uniform hyperbolic hypotheses was inspired by the definition of Axiom-A (where there is forward contraction in the stable direction and backward contraction in the unstable direction) but asked for contraction with a $\limsup$. The main goal of this paper is to prove that this assumption can be released and replaced by a $\liminf$. Such a result is optimal because this is the weakest possible assumption in the ``hyperbolic world''.

We also emphasize a noticeable  second improvement: we prove that the SRB measure constructed is actually a probability measure. 


As this paper is an improvement of \cite{leplaideur-aaa}, the present paper has a very similar structure to \cite{leplaideur-aaa} and the statements are also similar.

\subsection{Statement of results}
We start by defining the class of dynamical systems that we are going to consider.

\begin{definition}\label{def-aaa}
Given  $f:M\to M$ a $C^{1+}$ diffeomorphism and $\Omega\subset M$ a compact $f$-invariant set, we say that $f$ is \emph{Almost Axiom A on $\Omega$}
 if there exists an open set $U\supset\Omega$ such that:
 \begin{enumerate}
   \item for every  $x\in U$ there is a $df$-invariant
splitting (invariant where it makes sense) of the tangent space
$T_xM=E^u(x)\oplus E^s(x)$ with
$x\mapsto E^u(x)$ and $x\mapsto E^s(x)$ H\"older continuous
(with uniformly bounded H\"older constant);
   \item there exist
continuous nonnegative real functions
$x\mapsto k^u(x)$ and $x\mapsto k^s(x)$ such that (for some choice of a
Riemannian norm $\|\ \|$ on $M$)
 for all $x\in U$
 \begin{enumerate}
   \item $\|df(x)v\|_{f(x)}\leq e^{-k^s(x)}\|v\|_x,\quad \forall v\in E^s(x)$,
   \item $\|df(x)v\|_{f(x)}\geq e^{k^u(x)}\|v\|_x,\quad \forall v\in E^u(x)$;
 \end{enumerate}
\item the \emph{exceptional set}, $\disp S=\{x\in U,\ k^u(x)=k^s(x)=0\}$, satisfies $f(S)=S$.
 \end{enumerate}
\end{definition}


From here on we assume that $f$ is Almost Axiom A on $\Omega$. The sets $S$ and $U\supset \Omega$ and the functions $k^u$ and $k^s$ are fixed as in the definition, and the splitting $T_xM=E^u(x)\oplus E^s(x)$ is called the \emph{hyperbolic splitting}.



\begin{definition}\label{def-point-integ}
A point $x\in \Omega$ is called a \emph{point of integration} of the hyperbolic splitting if
there exist $\eps>0$ and  $C^1$-disks $D^u_\eps(x)$ and
$D^s_\eps(x)$ of size $\eps$ centered at $x$ such that
 $T_yD^i_\eps(x)=E^i(y)$ for all $y\in D^i_\eps(x)$
and $i=u,s$.
\end{definition}

By definition, the set of points of integration is invariant by $f$. As usual, having the two families of local stable and unstable manifolds defined, we define
$$\CF^u(x)=\bigcup_{n\geq 0}f^nD^u_{\eps(-n)}(f^{-n}(x))\quad \text{and}\quad
\CF^s(x)=\bigcup_{n\geq 0}f^{-n}D^s_{\eps(n)}(f^{n}(x)),$$ where
$\eps(n)$ is the size of the disks associated to $f^n(x)$. They are called the global stable and unstable manifolds, respectively.


Given a point of integration of the hyperbolic splitting $x$, the manifolds
$\CF^u(x)$ and $\CF^s(x)$ are also immersed Riemannian manifolds. We
denote by $d^u$ and $d^s$ the Riemannian metrics, and by
$\leb^u_x$ and $\leb^s_x$ the Riemannian measures, respectively in $\CF^u(x)$ and $\CF^s(x)$. If a measurable partition is subordinated to the unstable foliation $\CF^u$
(see \cite{rohlin1} and \cite{ledrap1}), any $f$-invariant measure
admits a unique system of conditional measures with respect to the
given partition.
\begin{definition}
We say that an invariant  and ergodic probability
having absolutely continuous conditional measures on unstable
leaves $\CF^u(x)$ with respect to $\leb^u_x$ is a
\emph{Sinai-Ruelle-Bowen (SRB) measure}.
\end{definition}

\begin{definition} \label{def.reg}
Given $\lambda>0$, a point $x\in \Omega$ is said to be
\emph{$\lambda$-hyperbolic}   if 
$$\liminf_{n\to+\8} \frac1{n}\log\|df^{-n}(x)_{|E^{u}(x)}\|\le-\lambda\qand\liminf_{n\to+\8} \frac1{n}\log\|df^{n}(x)_{|E^{s}(x)}\|\le-\lambda.$$
\end{definition}

\begin{definition}
Given $\lambda>0$ and $\eps_0>0$, a point $x\in \Omega$ is called \emph{$(\eps_0,\lambda)$-regular} if the following conditions hold:
\begin{enumerate}
  \item $x$ is  $\lambda$-hyperbolic and a point of integration of the hyperbolic splitting;
  \item $\CF^i(x)$ contains a disk $D^i_{\eps_0}(x)$
 of size
$\eps_0$ centered at $x$, for $i=u,s$.
\end{enumerate}
An $f$-invariant compact set  $\Lambda\subset \Omega$ is said to be  an
\emph{$(\eps_0,\lambda)$-regular set} if all points in $\Lambda$ are
$(\eps_0,\lambda)$-regular.
\end{definition}

\begin{Main}\label{Theorem A} Let  $\Lambda$ be an
$(\eps_0,\lambda)$-regular set.  If there exists some point  $x_0\in \Lambda$ such
that
 $\leb_{D^u_{\eps_0}(x_0)}(D^u_{\eps_0}(x_0)\cap\Lambda)>0$,
then $f$ has a SRB measure.
\end{Main}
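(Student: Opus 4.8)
The plan is to build the SRB measure as a weak-$*$ limit of empirical averages of Lebesgue measure pushed along the unstable disk through $x_0$, following the classical Bowen–Ruelle scheme adapted to the almost-Axiom-A setting. First I would consider the reference measure $\mu_0 = \leb_{D^u_{\eps_0}(x_0)}$ restricted to $D^u_{\eps_0}(x_0)\cap\Lambda$, which is a positive finite measure by hypothesis, and normalize it. I would then form the Cesàro averages
\begin{equation}
\mu_n = \frac1n\sum_{k=0}^{n-1} f^k_*\mu_0
\end{equation}
and extract a subsequence converging in the weak-$*$ topology to an $f$-invariant probability measure $\mu$; compactness of $M$ guarantees a limit exists and is a probability, which already delivers the ``second improvement'' (finiteness) claimed in the introduction, provided no mass escapes --- and here there is nothing to escape to since $M$ is compact. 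The real content is to show $\mu$ has absolutely continuous conditional measures on unstable leaves.

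For the absolute continuity I would exploit that every point of $\Lambda$ is $(\eps_0,\lambda)$-regular, so the forward images $f^k(D^u_{\eps_0}(x_0))$ are unstable disks whose internal geometry is controlled: using the $\liminf$ condition in Definition \ref{def.reg} together with the Hölder continuity of $x\mapsto E^u(x)$ from Definition \ref{def-aaa}, one obtains bounded distortion for the Jacobian of $f^k$ along these disks along a suitable sequence of ``hyperbolic times'' (the times $n$ realizing the $\liminf$). The standard argument is then: the densities of the normalized restrictions of $f^k_*\mu_0$ to small unstable plaques, expressed via the inverse branch and the distortion bound, are uniformly bounded above and below on a uniform scale; hence any weak-$*$ limit of the $\mu_n$, disintegrated along a measurable partition subordinate to $\CF^u$, has conditional densities that are themselves bounded above and below with respect to $\leb^u$, and in particular are absolutely continuous. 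Passing distortion bounds through the limit is done by working on a fixed partition into plaques of size comparable to $\eps_0$ and using that the $\liminf$ hypothesis forces a positive-density subset of the iterates $k$ to be ``good'' (genuinely expanding with controlled distortion); averaging over those good iterates is what survives in the limit. To guarantee $\mu$ is not supported on the exceptional set $S$ (where it could fail to be SRB), I would note that the $\lambda$-hyperbolicity of points of $\Lambda$ gives a definite amount of unstable expansion infinitely often, so the mass placed by the $\mu_n$ on a neighborhood of $S$ stays bounded away from $1$; alternatively one restricts attention from the start to the fraction of iterates landing outside a neighbourhood of $S$.

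The main obstacle --- and the essential difference from \cite{leplaideur-aaa} --- is precisely that one only has a $\liminf$, not a $\limsup$, in the hyperbolicity assumption. With a $\limsup$ one could choose, for each point, a frequency-one sequence of hyperbolic times and run Pliss-type arguments cleanly; with only a $\liminf$ the set of hyperbolic times may have density zero, so the distortion control must be extracted more carefully, e.g. by a covering/stopping-time argument that selects, inside each iterate $f^k(D^u_{\eps_0}(x_0))$, the sub-plaques that have already experienced enough expansion, and by showing that the total $\mu_0$-mass of points never realizing such expansion is null (this is where $\lambda$-hyperbolicity of \emph{every} point of $\Lambda$, and the positivity of $\leb_{D^u_{\eps_0}(x_0)}(D^u_{\eps_0}(x_0)\cap\Lambda)$, are used). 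Once the uniform two-sided density bounds are established along the relevant sub-collection of iterates, the disintegration argument and the verification that $\mu$ is SRB proceed as in the $\limsup$ case. I expect the bulk of the work, and the technical heart of the paper, to be this replacement of frequency-one hyperbolic times by a stopping-time construction that still yields bounded distortion in the limit.
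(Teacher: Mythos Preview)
Your proposal has a genuine gap at exactly the point you identify as the ``main obstacle,'' and the gap stems from a misdiagnosis of that obstacle.

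You write that under a $\liminf$ hypothesis ``the set of hyperbolic times may have density zero,'' and you propose to compensate with an unspecified stopping-time construction. But in the Almost Axiom A setting this is false, and seeing why is the actual heart of the argument. The point is the coupling built into Definition~\ref{def-aaa}: the functions $k^u$ and $k^s$ vanish \emph{simultaneously} on the exceptional set $S$. The $\liminf$ condition on $\|df^n|_{E^s(x)}\|$ gives infinitely many $n$ with $\log\|df^n|_{E^s(x)}\|\le -\lambda n/2$; such strong stable contraction is impossible unless the forward orbit spends a positive \emph{lower} density of its time in a region $\Omega_3$ bounded away from $S$ (near $S$ there is essentially no contraction). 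But on $\Omega_3$ one also has definite unstable expansion, because $k^u$ and $k^s$ vanish together. Hence
\[
\liminf_{n\to\infty}\frac1n\sum_{j=0}^{n-1}\log\|df^{-1}_{|E^u(f^j(x))}\|<-\zeta
\]
for a uniform $\zeta>0$, and now the standard Pliss argument \emph{does} yield a positive density of $r_0$-hyperbolic times. This is the paper's Lemma~\ref{lem-hyperliminfmedida}, and it is what replaces the $\limsup$ hypothesis of \cite{leplaideur-aaa}. Your proposal bypasses this mechanism entirely and instead tries to invent a workaround for a difficulty that does not exist; without the coupling argument, the vague stopping-time idea has no teeth.

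Secondarily, the architecture is quite different. The paper does not take direct Ces\`aro averages of $f^k_*\leb^u$ on $M$. It builds a finite Markov family of rectangles $\CR$ in a region $\CS_0$ away from $S$, defines an induced map $g=f^\tau$ whose return times are hyperbolic times, constructs a $g$-invariant SRB measure $\mu$ by averaging $g^i_*\leb^u$ (with bounded distortion coming from Lemmas~\ref{lem-jaco-born-2}--\ref{lem-jaco-born-3}), and then spreads $\mu$ to an $f$-invariant measure $\widehat m=\sum_i\sum_{j<i}f^j_*(\mu|\{\tau=i\})$. In this scheme the ``second improvement'' --- that the SRB measure is a probability --- is \emph{not} automatic from compactness of $M$: it is exactly the statement that $\tau\in L^1(\mu)$, proved in Proposition~\ref{pr.a-d}, and the proof uses precisely the positive density of hyperbolic times obtained above. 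Your remark that compactness alone delivers finiteness conflates two different constructions; in your direct scheme the limit is indeed a probability, but the work then relocates to showing it is not supported on $S$ and has a.c.\ conditionals, and for that you again need the positive-density statement you dismissed.
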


Note that the hypothesis of Theorem~\ref{Theorem A} is very
weak. If there exists some probability SRB measure, $\mu$, then,
there exists some $(\eps_0,\lambda)$-regular set
$\Lambda$ of full $\mu$ measure such that for $\mu$-a.e. $x$ in
$\Lambda$, $\leb^u_{x}(D^u_{\eps_0}(x)\cap\Lambda)>0$. On the other hand, a
work due to M. Herman (\cite{Herman}) proves that there exist some
dynamical systems on the circle such that Lebesgue-almost-every point
is ``hyperbolic'' but there is no  SRB measure (even
$\sigma$-finite).

The existence of stable and unstable leaves is crucial to define the  SRB measures. These existences are equivalent to the existence of integration points for a  hyperbolic splitting. This is well known when the hyperbolic splitting is dominated, because it yields the existence of an uniform spectral gap for the derivative. A very close result is also well known in the Pesin Theory, but only on a set of full measure, and then, the precise topological characterization of the set of points of integration given by Pesin Theory depends on the choice of the invariant measure.

In our case the splitting is not dominated and we have no given invariant measure to use Pesin theory. Nevertheless, using the graph transform, we prove integrability even in the presence of indifferent points for a set of points whose precise characterization does not depend on the ergodic properties of some invariant probability measure which would be given {\it a priori}.

\begin{definition}  
A  $\lambda$-hyperbolic point $x\in \Omega$ is said to be of \emph{bounded type} if 
there exist two increasing sequences of integers $(s_{k})$ and $(t_{k})$ with
$$ \limsup_{k\to+\8} \frac{s_{k+1}}{s_{k}}<+\8\qand \disp \limsup_{k\to+\8} \frac{t_{k+1}}{t_{k}}<+\8$$
such that
$$\disp\lim_{k\to+\8} \frac1{s_{k}}\log\|df^{-s_{k}}(x)_{|E^{u}(x)}\|\le-\lambda\qand\lim_{k\to+\8} \frac1{t_{k}}\log\|df^{t_{k}}(x)_{|E^{s}(x)}\|\le-\lambda.$$
\end{definition}
Observe that the last requirement on these sequences is an immediate consequence  Definition~\ref{def.reg}. We  emphasize that the assumption of being of bounded type is weak. Unless the sequence $(s_{k})$ is factorial it is of bounded type. An exponential sequence for instance is of bounded type. Thus, this assumption does not yield that the sequence $(s_{k})$ has positive density. 

\begin{Main}\label{Theorem B} Every $\lambda$-hyperbolic
point  of bounded type is a point of integration of the hyperbolic splitting.
\end{Main}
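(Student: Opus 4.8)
\medskip

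The plan is to produce the local unstable disk $D^u_\eps(x)$ by the classical graph transform (Hadamard--Perron) method, run along the backward orbit of $x$, and symmetrically for the stable disk along the forward orbit; by the $df$-invariance of the hyperbolic splitting and of the set of integration points, it suffices to construct $D^u_\eps(x)$. First I would fix adapted coordinates: since $x\mapsto E^u(x)$ and $x\mapsto E^s(x)$ are H\"older with uniformly bounded H\"older constant on $U$, I can choose a uniform $\rho>0$ and, at each point $y$ of the orbit, identify a $\rho$-ball around $y$ with a neighborhood of $0$ in $E^u(y)\oplus E^s(y)$ so that $df$ (and its inverse) reads in these charts as a small perturbation of the block-diagonal map $(\xi,\eta)\mapsto(A_y\xi,B_y\eta)$, where $\|A_y^{-1}\|\le e^{-k^u(y)}\le 1$ and $\|B_y\|\le e^{-k^s(y)}\le 1$. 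The off-diagonal terms and the nonlinearity are controlled by the H\"older exponent of the splitting together with the $C^{1+}$ modulus of $f$; crucially, the contraction/expansion rates $k^u,k^s$ are only \emph{nonnegative}, so along pieces of the orbit near the exceptional set $S$ the cone field is merely \emph{weakly} invariant (non-strictly) and the graph transform is merely non-expanding rather than a contraction.

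\medskip

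Next I would set up the graph transform $\Gamma_n$ that sends a Lipschitz graph over the unstable subspace $E^u(f^{-n}(x))$ (with Lipschitz constant $\le 1$, say, defined on a disk of a fixed size) to its image under $df(f^{-n-1}(x))$, reparametrized as a graph over $E^u(f^{-n-1}(x))$. Because each $k^u,k^s\ge 0$ and the perturbation terms are uniformly small, each $\Gamma_n$ maps the space of admissible graphs into itself (the cone condition is preserved), and the composed transform $\Gamma_n\circ\cdots\circ\Gamma_1$ contracts distances between graphs at rate bounded by $\prod_{j=1}^n e^{-k^u(f^{-j}(x))}\cdot(1+\text{small})^{n}$ --- but I cannot afford the factor $(1+\text{small})^n$ to beat an $a\ priori$ unknown product of the $e^{-k^u}$. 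This is where being a $\lambda$-hyperbolic point of \emph{bounded type} enters. Along the subsequence $(s_k)$ one has $\|df^{-s_k}(x)_{|E^u(x)}\|\le e^{-\lambda s_k/2}$ for $k$ large; hence on the block of indices $(s_k,s_{k+1}]$ the accumulated genuine expansion in the unstable direction is at least $e^{\lambda s_k/2}$ (roughly), while the accumulated parasitic factor is at most $C^{\,s_{k+1}-s_k}$ for a constant $C=1+\text{small}$ close to $1$; by choosing the size of the charts (hence "small", hence $C$) so that $\log C < \lambda/(4\sup_k s_{k+1}/s_k)$ --- possible precisely because $\limsup s_{k+1}/s_k<\infty$ --- the product over each block is bounded by $e^{-\lambda s_k/8}$, so the total contraction along $(s_k)$ is summable and $\Gamma_n\circ\cdots\circ\Gamma_1$ is eventually a genuine contraction. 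The fixed-point/limit argument then gives a Lipschitz graph, and a standard bootstrap (using that the graph transform improves the Lipschitz constant wherever there is strict expansion, along the subsequence $(s_k)$) upgrades it to a $C^1$ disk whose tangent space at every point $y$ is exactly $E^u(y)$, i.e. a genuine $D^u_\eps(x)$.

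\medskip

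The symmetric construction along $(t_k)$ and the forward orbit yields $D^s_\eps(x)$, so $x$ is a point of integration. The main obstacle is exactly the non-dominated, indifferent-point feature of the setting: the cone field is only non-strictly invariant near $S$, so there is no uniform spectral gap and no a priori control of the accumulated perturbation over long stretches where $k^u\approx 0$; the bounded-type hypothesis is what lets me amortize those stretches against the definite expansion guaranteed on the sparse-but-not-too-sparse subsequence, by keeping the chart size small enough that the per-step parasitic constant $C$ is beaten by the geometric ratio $s_{k+1}/s_k$. A secondary technical point is ensuring the size $\eps$ of the resulting disk stays bounded below uniformly (it should, since the domain sizes of the admissible graphs contract by at most the same $C^{s_{k+1}-s_k}$ per block and then recover), which is needed for the conclusion as stated in Definition~\ref{def-point-integ}.
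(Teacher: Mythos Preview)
Your overall architecture is right---graph transform along the backward orbit, with the bounded-type hypothesis controlling the bad stretches---but you have inverted the difficulty: what you dismiss as a ``secondary technical point'' (keeping the size of the graph bounded away from zero) is in fact \emph{the} obstacle, and your heuristic for it is incorrect.

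Concretely: your claim that ``each $\Gamma_n$ maps the space of admissible graphs into itself'' on a fixed-size ball fails near $S$. The spectral gap $k^u(y)+k^s(y)$ of $d\wh f_y(0)$ tends to $0$ as $y\to S$, so the Lipschitz bound on the nonlinearity $\wh f_y-d\wh f_y(0)$ needed to preserve the Lip-$1$ cone can only be enforced on a ball whose radius shrinks to $0$ near $S$. The paper circumvents this by \emph{bundling} an entire excursion of length $m_k$ inside $B(S,\eps_1)$ into one $m_k$-step transform $\wh f^{m_k}$: this composite map has a uniform spectral gap (its endpoints lie in $\Omega_0$), but Proposition~\ref{prop-prA-kergo} only guarantees the graph-transform hypotheses on a ball of radius $Ce^{-9m_k\lambda/(2K)}$. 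The graph must therefore be \emph{truncated} to that size before crossing the excursion. Your estimate that the domain shrinks by at most $C^{\,s_{k+1}-s_k}$ per block, with $C$ close to $1$ by choice of chart, is thus wrong on two counts: the shrinkage is governed by the excursion length $m_k$, not by $s_{k+1}-s_k$, and it is not a per-step $(1+\text{small})$ factor that chart size can kill.

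The actual mechanism (Lemmas~\ref{lem-nique-nuh-ze1}--\ref{lem-nique-nuh-ze2} and Proposition~\ref{prop-key-paststab}) is combinatorial rather than perturbative. One shows that if the expansion accumulated by $df^{q_k}$ along the unstable direction \emph{fails} to compensate the truncation $e^{-9m_k\lambda/(2K)}$, then $m_k$ is forced to be huge compared to $q_k$ (roughly $m_k\gtrsim Kq_k$). But then on the long interval $[q_k,p_k]$ the orbit sits in $B(S,\eps_1)$ where $\|df_{|E^u}\|\le e^{\lambda/K}$, and a short computation shows no $s_n$ can fall in most of that interval; the resulting gap forces $s_{n+1}/s_n$ to exceed any prescribed $L$, contradicting bounded type once $K$ is chosen large relative to $L$. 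This is where the hypothesis $\limsup s_{k+1}/s_k<\infty$ is really used---to bound excursion lengths against return times, not to amortize a per-step constant.
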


\subsection{Overview} The rest of this paper proceeds as follows: in Section \ref{sec-rec}
we construct three different generations of rectangles satisfying some Markov property. The last generation is a covering of a ``good zone'' sufficiently far away from the critical zone, and points there have good hyperbolic behaviors. 

In Section \ref{sec-proofthA}, we prove Theorem A. Namely we construct a measure for the return into the cover of rectangles of third generation, and then we prove that this measure can be \emph{opened out } to a $f$-invariant  SRB measure.


In Section \ref{sec-proofthB} we prove Theorem B, that is the existence of stable and unstable manifolds.

The proofs are all based on the same key point: the estimates are all uniformly
hyperbolic outside some fixed bad
neighborhood $B(S,\eps_1)$  of $S$. We show that a $\lambda$-hyperbolic
point cannot stay too long in this fixed neighborhood (see e.g. Lemmas \ref{lem-hyperliminfmedida} and \ref{lem-prA-n+-fini}). Then, an incursion in $B(S,\eps_1)$
cannot spoil too much the (uniformly) hyperbolic estimates of
contractions or expansions.

Obviously, all the constants appearing  are strongly correlated, and special care is taken in Section~\ref{sec-rec} in choosing them in the right
order.



\section{Markov rectangles}\label{sec-rec}

\subsection{Neighborhood of critical zone}

Let $\Lambda$ be some fixed $(\eps_0,\lambda)$-regular set
satisfying the hypothesis of Theorem~\ref{Theorem A}. The
goal of this section is to construct Markov rectangles covering a large part of $\Lambda$ and then to use Young's method; see e.g. \cite{Young-Hu}.
This type of construction has already been implemented  in \cite{leplaideur-aaa}. We adapt it here and focus on the steps where the new assumption (namely $\liminf$ instead of $\limsup$) produces some changes. 

To control the lack of hyperbolicity close to the critical set $S$ we will introduce several constants. Some are directly related to the map $f$, some are related to the set $\Lambda$ and others depend on previous ones. Their dependence will be established in Subsection~\ref{subset-rect3}.

Given
$\eps_1>0$ we define $$B(S,\eps_1)= \{y\in M, d(S,y)<\eps_1\},$$
and
 $$\Omega_0= \Omega\setminus
B(S,\eps_1),\quad
\Omega_1=\Omega_0\cap f(\Omega_0)\cap f^{-1}(\Omega_0)\quad\text{and}
\quad\Omega_2=\Omega_0\setminus\Omega_1.$$

\begin{figure}
\begin{center}
\includegraphics[scale=0.55]{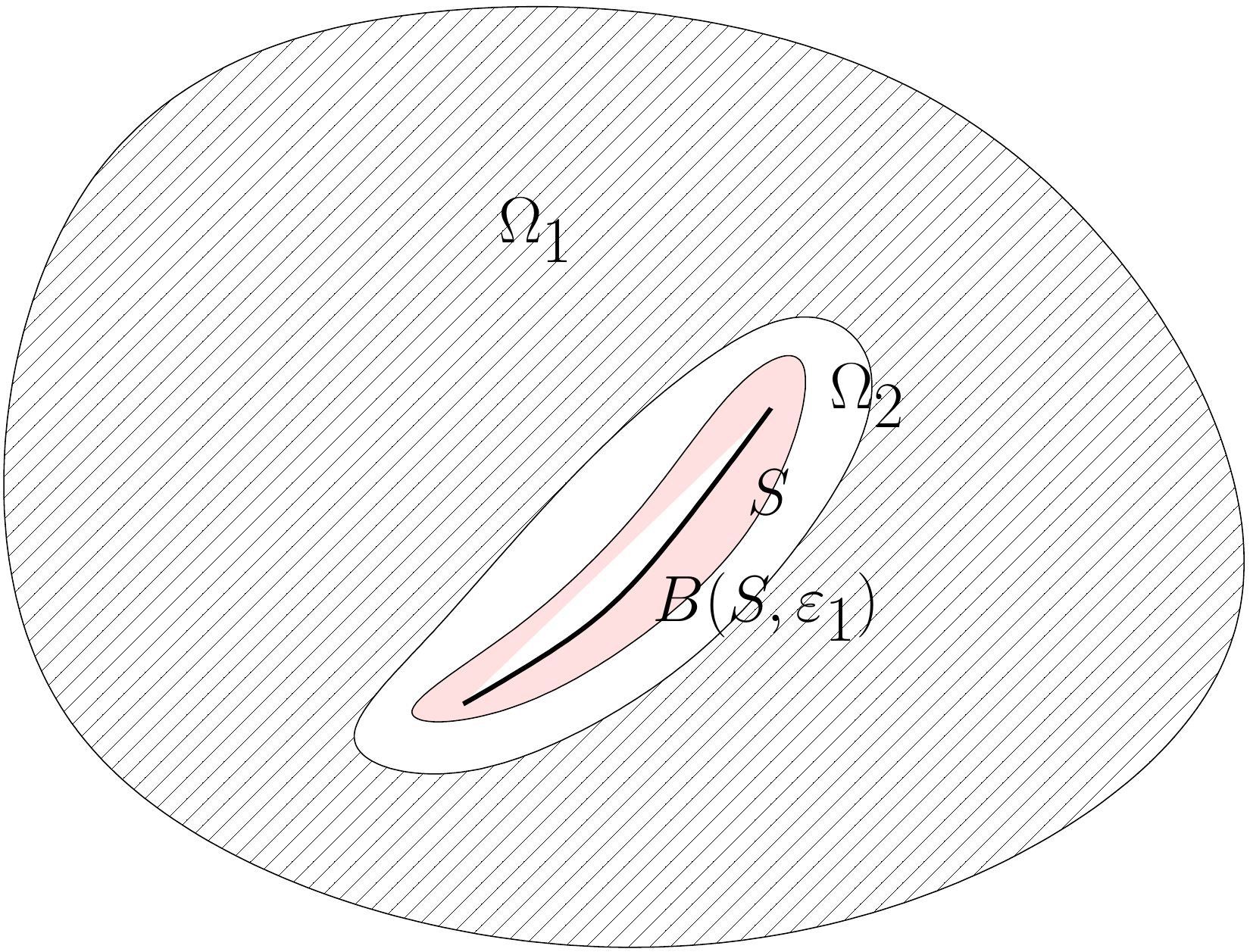}
\caption{The sets $\Omega_{i}$}
\label{fig-omegai}
\end{center}
\end{figure}

The main idea is to consider a new dynamical system, which is in spirit equal to the first return to $\Omega_{0}$. This system will be obtained as the projection of a subshift with countable alphabet, this countable shift being obtained via a shadowing lemma.  This will define rectangles of first generation. To deal with these countably many rectangles we will need to do some extra-work to be able to ``cut''  them. 
This will be done by defining new rectangles qualified of second and third generation.

We assume that
$\eps_1>0$ has been chosen sufficiently  small so that for every
$x$ in $\Omega_2$,
 $$
1\leq\|df(x)_{|E^u(x)}\|<e^{2\frac\lambda3}\qand\disp
1\leq\|df^{-1}(x)_{|E^s(x)}\|<e^{2\frac\lambda3}.$$
Then we fix $0<\eps_2<1$  and define  $\Omega_3=\Omega_3(\eps_2)$ as the set of
points
$x\in\Omega$ such that
\begin{equation}
\label{equ-omega3}
 \min\left( \log \|df_{|E^u(x)}\|,\, -\log
\|df^{-1}_{|E^u(x)}\|,\, -\log \|df_{E^s(x)}\|, \,\log
\|df^{-1}_{|E^s(x)}\|\right) \geq
\eps_2\lambda.
\end{equation}

\begin{definition}\label{def-n+}
Let $x$ in $\Omega_0$.
If $f(x)\in B(S,\eps_1)$, we define the \emph{forward length of stay} of $x$ in $B(S,\eps_1)$ as
$$\disp n^+(x)=\sup\left\{n \in \N:\,
 f^k(x)\in B(S,\eps_1),\quad \text{for all } 0<k<n\right\}.$$
If $f^{-1}(x)\in B(S,\eps_1)$, we define the \emph{backward length of stay} of $x$ in $B(S,\eps_1)$ as
$$\disp n^-(x)=\sup\left\{n \in \N:\,
 f^{-k}(x)\in B(S,\eps_1),\quad \text{for all } 0<k<n\right\}.$$
 Observe that any of these numbers may be equal to $+\infty$.
\end{definition}

\subsection{First generation of rectangles}
A first class of rectangles is constructed by adapting the classical Shadowing Lemma 
 to our case. 
 As mentioned above,  this was done in~\cite[Section 3]{leplaideur-aaa}; see Proposition 3.4. It  just needs local properties of $f$ and the dominated splitting $E^{u}\oplus E^{s}$, not requiring any other hyperbolic properties than the local stable and unstable leaves. Therefore, the new assumption  with $\liminf$ instead of $\limsup$ in the definition of $\lambda$-hyperbolic point does not produce any change  in that construction.  We summarize here the essential properties for these rectangles. 
 
 There is a subshift  of finite type $(\S_{0},\sigma)$  with a countable alphabet $\CA:=\{a_{0},a_{1},\ldots\}$
and a map $\Theta:\S_{0}\to M$ such that the following holds: 

\begin{enumerate}
\item $\Theta(\S_{0})$ contains $\Lambda\cap \Omega_{0}$;
\item the dynamical system $(\S_{0},\sigma)$ induces a dynamical system $(\Theta(\S_{0}),F)$ commuting the diagram
 $$
\begin{array}{rcl}
 \Sigma_0&\stackrel{\sigma}{\longrightarrow}&\Sigma_0\\
\Theta\downarrow&&\downarrow\Theta\\
\Theta(\S_{0})&\stackrel{F}{\longrightarrow}&\Theta(\S_{0});
\end{array}
$$
\item for each  $x\in\Omega_{0}$ we have: $F(x)=f(x)$ if $x\in \Omega_{1}$; $F(x)=f^{n^{+}(x)}(x)$ if $x\in \Omega_{2}\cap f^{-1}(B(S,\eps_{1}))$; and $F^{-1}(x)=f^{-n^{-}(x)}(x)$ if $x\in \Omega_{2}\cap f(B(S,\eps_{1}))$. 
\end{enumerate}

Roughly speaking,  $F$ is the first return map into $\Omega_{0}$ (actually it is defined on the larger set $\Theta(\S_{0})$) and $\s$ is a symbolic representation of this dynamics. 

We define the \emph{rectangles of first generation} as the sets $T_{a_{n}}=\Theta([a_{n}])$, where $[a_{n}]$ is the cylinder in $\S_{0}$ associated to the symbol $a_{n}$, i.e. $[a_{n}]$ is the set of all sequences $\ul x=(x({k}))_{k\in\Z}$ in $\S_{0}$ with $x({0})=a_{n}$.

We remind that $[\ul y,\ul z]$ denotes in $\S_{0}$ the sequence with the same past than $\ul y$ and the same future than $\ul z$. And $[y,z]$ denotes the intersection of  $W^{u}_{loc}(y)$ and $W^{s}_{loc}(z)$. Rectangle means that for $y$ and $z$ in $T_{i}$, $[y,z]$ is also in $T_{i}$.

We may take the rectangles  of first generation with size as small as want, say $\delta>0$, and the following additional properties:
\begin{enumerate}
\item they respect the local product structures in $M$ (at least for the points in $\Lambda$) and in $\S_{0}$: if  we set $W^{u,s}(x,T_{a_{n}})=D^{u,s}_{2\rho}(x)\cap T_{a_{n}}$ for each $x\in T_{a_{n}}$, then  for all $\ul y,\ul z\in [a_{n}]$, 
$$\Theta([\ul y,\ul z])=[\Theta(\ul y),\Theta(\ul z)].$$

\item they are \emph{almost Markov}:
if $x=\Theta(\ul x)$ with $\ul x=x(0)x(1)x(2)\ldots$ and $x(i)$ in the alphabet $\CA$, then for every $j\ge 0$ we have
$$F^{j}(W^{u}(x,T_{x(0)}))\subset W^{u}(F^{j}(x),T_{x(j)})$$
and 
$$
F^{j}(W^{s}(x,T_{x(0)}))\subset W^{s}(F^{j}(x),T_{x(j)}).$$
\end{enumerate}

This last item does not give a full Markov property because it depends on the existence of some code $\ul x$ connecting the two rectangles $T_{x(0)}\ni x$ and $T_{x(j)}\ni F^{j}(x)$.

\subsection{Second generation rectangles}
To get a full Markov property we need to cut the rectangles of first generation as in \cite{bowen}. Due to the non-uniformly hyperbolic settings, we first need to reduce the rectangles to a set of good points with good hyperbolic properties. 

Roughly speaking, the second generation of rectangles we construct here is based  on the following process: we select points in the rectangles $T_{j}$ which are 
\begin{itemize}
\item $\lambda$-hyperbolic,
\item $\leb^{u}$ density points with this property,
\end{itemize}
We show  in Lemma \ref{lem-hyperliminfmedida} that these points satisfy some good property related to the use of the Pliss lemma. 
Then, this set of points is ``saturated'' with respect to the local product structure.

The goal of the next lemma is to use Pliss Lemma. A similar version was already stated in \cite{leplaideur-aaa} but here, exchanging the assumption $\limsup$ by $\liminf$ plays a role.

\begin{lemma}
\label{lem-hyperliminfmedida}
There exists $\zeta>0$ such that for every $\lambda$-hyperbolic point $x$
\begin{equation}
\liminf_\ninf
\frac1n\sum_{j=0}^{n-1}\log\|df^{-1}_{|E^u(f^j(x))}\|<
-\zeta,\label{hypoliminf-u-theoB}
\end{equation}
\end{lemma}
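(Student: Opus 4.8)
The plan is to convert the two-sided $\lambda$-hyperbolicity hypothesis into a one-sided Cesàro-type statement about the unstable cocycle $\log\|df^{-1}_{|E^u}\|$ along the forward orbit. The starting point is the definition of $\lambda$-hyperbolic: $\liminf_{n\to\infty}\frac1n\log\|df^{-n}(x)_{|E^u(x)}\|\le -\lambda$. The natural first move would be to try to relate $\frac1n\log\|df^{-n}(x)_{|E^u(x)}\|$ to $\frac1n\sum_{j=0}^{n-1}\log\|df^{-1}_{|E^u(f^{-j}(x))}\|$ or to $\frac1n\sum_{j=0}^{n-1}\log\|df^{-1}_{|E^u(f^{j}(x))}\|$ — but these are backward-orbit sums, not the forward-orbit sum appearing in \eqref{hypoliminf-u-theoB}, so some care is needed. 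Since $E^u$ is one-dimensional along leaves is not assumed, in general $\|df^{-n}(x)_{|E^u(x)}\|$ is only submultiplicative, so one only gets $\frac1n\log\|df^{-n}(x)_{|E^u(x)}\|\le \frac1n\sum_{j=1}^{n}\log\|df^{-1}_{|E^u(f^{-(j-1)}(x))}\|$ along the backward orbit. To reach a statement along the \emph{forward} orbit, I would instead argue as follows: the quantity $\log\|df^{-1}_{|E^u(y)}\|$ is a continuous function on $U$ (by H\"older continuity of $E^u$ and $C^1$-ness of $f$) which is $\le 0$ everywhere on $U$ (by condition (2)(b) of Definition~\ref{def-aaa}, $\|df(x)v\|\ge e^{k^u(x)}\|v\|\ge\|v\|$, hence $\|df^{-1}_{|E^u}\|\le 1$), and it is strictly negative precisely off a neighborhood of the exceptional set $S$.

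The key mechanism is that a $\lambda$-hyperbolic point cannot spend asymptotically all of its time near $S$, combined with the uniform estimate that away from $B(S,\eps_1)$ one has genuine expansion bounded below. Concretely: choose $\eps_1$ small enough that on $\Omega\setminus B(S,\eps_1)$ one has $\log\|df^{-1}_{|E^u}\|\le -c$ for some fixed $c>0$ (possible by continuity and compactness, since this function vanishes only on $S$). Pick $\zeta\in(0,c)$ to be specified. I claim that if \eqref{hypoliminf-u-theoB} failed, i.e. if $\liminf_n \frac1n\sum_{j=0}^{n-1}\log\|df^{-1}_{|E^u(f^j(x))}\|\ge -\zeta$, then along a subsequence the forward orbit of $x$ would have asymptotic frequency at least $1-\zeta/c$ of its iterates inside $B(S,\eps_1)$ — and by invariance/recurrence this would force the backward-orbit expansion rate $\frac1n\log\|df^{-n}(x)_{|E^u(x)}\|$ to be close to $0$, contradicting $\lambda$-hyperbolicity once $\zeta$ is chosen small relative to $\lambda$ and the uniform bound $e^{2\lambda/3}$ for the distortion inside $\Omega_2$. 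The quantitative bookkeeping uses precisely the structure already set up: on $\Omega_2\subset\Omega_0$ one has $1\le\|df^{-1}(x)_{|E^s(x)}\|<e^{2\lambda/3}$ and the analogous unstable bound, so incursions into $B(S,\eps_1)$ cost at most a controlled amount, while on $\Omega_1$ one has full hyperbolicity.

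The main obstacle I expect is the passage between the backward-orbit information in the definition of $\lambda$-hyperbolic and the forward-orbit sum in the conclusion; this is not a formal manipulation because the expansion of $df^{-n}$ on $E^u(x)$ reads the derivative along $f^{-1}(x),\dots,f^{-n}(x)$, whereas \eqref{hypoliminf-u-theoB} concerns $f(x),\dots,f^{n-1}(x)$. The resolution is that $\lambda$-hyperbolicity is a property of the orbit, not of a distinguished endpoint: applying the definition at the point $f^{n}(x)$ (or exploiting that the hypothesis, being about a $\liminf$ over all large $n$, yields infinitely many ``good'' windows $[f^{-n}(x),x]$ whose translates by $f^n$ are windows $[x,f^n(x)]$ along the forward orbit) lets one transfer the estimate. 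Then the Pliss lemma (to be invoked in the subsequent construction, not here) will harvest from \eqref{hypoliminf-u-theoB} a positive-density set of ``hyperbolic times'' for the forward orbit. The remaining steps — choosing $\zeta$ explicitly in terms of $\lambda$, $c$, and $\eps_1$, and verifying the elementary inequality chain — are routine and I would not belabor them.
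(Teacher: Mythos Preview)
Your proposal has a genuine gap at exactly the point you flag as ``the main obstacle.'' The unstable part of the $\lambda$-hyperbolicity hypothesis, $\liminf_{n\to\infty}\frac1n\log\|df^{-n}(x)_{|E^u(x)}\|\le -\lambda$, is a statement about the \emph{backward} orbit $f^{-1}(x),f^{-2}(x),\dots$, and your suggested fixes do not transfer it to the forward orbit. Applying the definition at $f^n(x)$ does not help: the $\liminf$ there is taken as the time parameter goes to $+\infty$, so it is again governed by the far past of the orbit, not by the finite segment $[x,f^n(x)]$. ``Translating windows'' likewise fails, because a window of good contraction for $df^{-n}$ on $E^u$ along $f^{-n}(x),\dots,x$ carries no information whatsoever about $df^{-1}$ on $E^u$ along $x,f(x),\dots,f^{n-1}(x)$. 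There is simply no reason the forward and backward orbits of a single point should have comparable visit statistics to $B(S,\eps_1)$, absent an invariant measure or some recurrence input you have not supplied.

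The idea you are missing, and which the paper uses, is to exploit the \emph{other} half of the $\lambda$-hyperbolicity hypothesis: the stable condition $\liminf_{n\to\infty}\frac1n\log\|df^n(x)_{|E^s(x)}\|\le -\lambda$ is already a forward-orbit statement. Along infinitely many $n$ one has $\|df^n_{|E^s(x)}\|\le e^{-\lambda n/2}$, and accumulating this much contraction forces the forward orbit to spend a definite proportion of time in the set $\Omega_3$ where the stable contraction rate is at least $\eps_2\lambda$. The punchline is then the structural feature of Almost Axiom A that $k^u$ and $k^s$ vanish together on $S$: for $\eps_2$ small, membership in $\Omega_3$ simultaneously guarantees $\log\|df^{-1}_{|E^u}\|\le -\eps_2\lambda$. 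So the forward-orbit frequency bound obtained from the stable side converts directly into the desired Ces\`aro estimate on the unstable side. Your frequency-of-visits framework is the right container; you just need to fill it using the stable hypothesis rather than trying to reverse time on the unstable one.
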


\begin{proof}

Recall that for $0<\eps_2<1$ we have defined $\Omega_3$ as the set of
points
$x\in\Omega$ such that
$$ \min( \log \|df_{|E^u(x)}\|, -\log
\|df^{-1}_{|E^u(x)}\|, -\log \|df_{E^s(x)}\|, \log
\|df^{-1}_{|E^s(x)}\|) \geq
\eps_2\lambda.$$
Let $x$ be a $\lambda$-hyperbolic point. 
We set
 $$
 \delta_{\eps_2}= \disp \liminf\frac1n\#\{0\leq k<n,\
f^k(x)\in \Omega_3\},
$$ 
where $\#A$ stands for the number of elements in a finite set $A$. As $x$ is $\lambda$-hyperbolic, there exists infinitely many values of $n$  such that 
$$\log \| df^{n}_{|E^{s}(x)}\|\le -\frac\lambda2 n.$$
For such an $n$, $\#\{0\leq k<n,\
f^k(x)\in \Omega_3\}$ must be big enough to get the contraction. To be more precise
\begin{equation}\label{equ-preu-delta-2'}
\delta_{\eps_2}\geq
\frac{1-\eps_2}{\frac{\log\kappa}{\lambda}-\eps_2}>0.
\end{equation}
This implies that
$$\liminf_\ninf
\frac1n\sum_{j=0}^{n-1}\log\|df^{-1}_{|E^u(f^j(x))}\|<
-\delta_{\eps_2} \eps_2\lambda.$$
\end{proof}

\begin{remark}
\label{rem-Ws-boncontract}
We emphasize that if \eqref{hypoliminf-u-theoB} holds for $x$, then it holds for every $y\in\CF^{s}(x)$. 
\end{remark}

The main idea of this part is to restrict the $F$-invariant set
$\bigcup_{i\in
\N}T_{a_{i}}$ ($=\Theta(\Sigma_0)$) to some subset satisfying some good
properties. Using Lemma~\ref{lem-hyperliminfmedida} we arrive to the same conclusion of  \cite[Proposition~3.8]{leplaideur-aaa}].

\begin{proposition}\label{prop-omega3}
There exists some $\lambda$-hyperbolic set $\Delta\subset
\Lambda$ such that
\begin{enumerate}
\item $\leb^u(\Delta)>0$;
\item every $x\in\Delta$ is a density point of $\Delta$
for $\leb^u_x$;
\item there exist some $\zeta>0$ such that for every $x\in\Delta$
\begin{equation}
\liminf_\ninf
\frac1n\sum_{j=0}^{n-1}\log\|df^{-1}_{|E^u(f^j(x))})\|<
-\zeta.\label{hypo-u-theoB}
\end{equation}

\end{enumerate}
 \end{proposition}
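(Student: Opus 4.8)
The plan is to start from the hypothesis of Theorem~\ref{Theorem A}: there is a point $x_0\in\Lambda$ with $\leb_{D^u_{\eps_0}(x_0)}(D^u_{\eps_0}(x_0)\cap\Lambda)>0$. Set $A_0=D^u_{\eps_0}(x_0)\cap\Lambda$, a set of positive $\leb^u_{x_0}$-measure consisting of $\lambda$-hyperbolic points of integration. Since $\Lambda$ is $(\eps_0,\lambda)$-regular, every point of $A_0$ carries an unstable disk of size $\eps_0$ inside $\CF^u$. The first step is to pass to a density version of $A_0$: by the Lebesgue density theorem applied to $\leb^u_{x_0}$, the set $A_0'$ of points of $A_0$ that are density points of $A_0$ still has full measure in $A_0$, hence positive measure, and in particular is nonempty and itself $\lambda$-hyperbolic. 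This already gives properties (1) and (2) of the proposition with $\Delta$ a subset of $A_0'$, but we must be careful that the density property survives the subsequent thinning.

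Next, I would bring in Lemma~\ref{lem-hyperliminfmedida}: it produces a single $\zeta>0$ (depending only on $f$, $\lambda$, $\eps_1$, $\eps_2$, $\kappa$, not on the point) such that \emph{every} $\lambda$-hyperbolic point $x$ satisfies $\liminf_n \frac1n\sum_{j=0}^{n-1}\log\|df^{-1}_{|E^u(f^j(x))}\|<-\zeta$. Because this $\zeta$ is uniform over all $\lambda$-hyperbolic points, inequality \eqref{hypo-u-theoB} holds automatically at every point of $A_0'$ with no further restriction of the set — so property (3) comes essentially for free once (1) and (2) are arranged. The only subtlety is that the statement wants $\Delta\subset\Lambda$ to be a set (not a subset of a single disk); so I would take $\Delta$ to be the $f$-orbit saturation, or more precisely the union over the relevant iterates, of $A_0'$, intersected back with $\Lambda$. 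One should check that $\leb^u$-positivity and the density-point property are preserved under the local dynamics and under taking this saturation, using absolute continuity of $f$ along unstable leaves (the $C^{1+}$ regularity gives bounded distortion of $f|_{\CF^u}$, so $f$ maps density points to density points and positive-measure sets to positive-measure sets). Since all points of $\Delta$ remain $\lambda$-hyperbolic (that condition is $f$-invariant), the uniform $\zeta$ from the lemma still applies.

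Concretely the steps are: (i) fix $A_0=D^u_{\eps_0}(x_0)\cap\Lambda$ with $\leb^u_{x_0}(A_0)>0$; (ii) restrict to density points $A_0'$, still of positive measure, invoking the Lebesgue density theorem on the Riemannian measure $\leb^u_{x_0}$; (iii) invoke Lemma~\ref{lem-hyperliminfmedida} to get the uniform $\zeta$, so \eqref{hypo-u-theoB} holds on all of $A_0'$; (iv) define $\Delta$ as an appropriate $F$- or $f$-invariant enlargement of $A_0'$ staying inside $\Lambda$, and verify via bounded distortion of $f$ along $\CF^u$ that positivity of $\leb^u$, the density-point property, and $\lambda$-hyperbolicity all persist; (v) conclude. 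The main obstacle — and the place where Remark~\ref{rem-Ws-boncontract} is relevant — is step (iv): reconciling ``$\Delta$ is a subset of one unstable disk'' (which is what density points naturally live in) with ``$\Delta$ is an invariant set'', and checking that the saturation does not destroy the Lebesgue density property. This is handled by the standard absolute-continuity/bounded-distortion argument for $C^{1+}$ diffeomorphisms along smooth invariant foliations, exactly as in \cite[Proposition~3.8]{leplaideur-aaa}, to which the structure of the argument is identical; the only input that changed from that reference is Lemma~\ref{lem-hyperliminfmedida}, whose $\liminf$ conclusion is precisely what makes property (3) go through.
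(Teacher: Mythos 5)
Your steps (i)--(iii) match the paper: Lemma~\ref{lem-hyperliminfmedida} indeed gives a single $\zeta$ valid for all $\lambda$-hyperbolic points, so item (3) is free, and the Lebesgue density theorem handles the passage to density points. The gap is in your step (iv), which you correctly identify as the main obstacle but then resolve with the wrong construction and the wrong tool. The paper does \emph{not} take the $f$-orbit saturation of a positive-measure subset of the single disk $D^u_{\eps_0}(x_0)$; such a set lies in a countable union of unstable leaves and is not stable under the bracket $[\cdot,\cdot]$, whereas the whole point of the proposition is to produce a $\Delta$ such that $S_{a_i}=T_{a_i}\cap\Delta$ is again a \emph{rectangle} (if $x,y\in\Delta$ then $[x,y]\in\Delta$). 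The paper instead defines
$$\Delta_0=\bigcup_{i}\{x\in T_{a_i}:\ \exists\,(y,z)\in\Lambda^2 \text{ with } x\in\CF^u(y)\cap\CF^s(z)\},$$
i.e.\ the local-product saturation of $\Lambda$ inside the first-generation rectangles. Property (3) on this larger set is not ``automatic from the lemma'' as in your argument: points of $\Delta_0$ need not be in $\Lambda$, and one needs Remark~\ref{rem-Ws-boncontract} (the $\liminf$ condition \eqref{hypo-u-theoB} transfers from $z\in\Lambda$ to every point of $\CF^s(z)$) — this is the actual role of that remark, not the one you assign to it.

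The second missing ingredient is the tool that makes the density-point property survive the saturation: it is the absolute continuity of the \emph{stable holonomy} between distinct unstable leaves, not bounded distortion of $f$ restricted to a single unstable leaf. Because the stable holonomy is absolutely continuous, if $x$ is a $\leb^u_x$-density point of $\Delta_0$ then every $y\in\CF^s(x)\cap\Delta_0$ is a $\leb^u_y$-density point of $\Delta_0$; hence the set $\Delta$ of density points of $\Delta_0$ is itself $F$-invariant and stable under intersections of stable and unstable leaves, and (having full relative $\leb^u$-measure in $\Delta_0$ on each leaf) every point of $\Delta$ is a density point of $\Delta$. Without the holonomy argument you cannot propagate the density property from the one leaf $D^u_{\eps_0}(x_0)$, where you established it, to the other unstable leaves that the product saturation necessarily introduces. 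So while your $\Delta$ would satisfy the literal wording of items (1)--(3), it would not support the construction of the second-generation rectangles that the proposition exists to enable.
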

\begin{proof}
Lemma \ref{lem-hyperliminfmedida}   gives a value $\zeta$  which works for every point in $\Lambda$. 
Defining 
$$\Delta_0=
\bigcup_{i\in\N}\{x\in T_{a_{i}}: \exists (y,z)\in
\Lambda^2\ \text{ such that } x\in \CF^u(y)\cap \CF^s(z)\}.$$
we have that $\Delta_0$ is an $F$-invariant set of $\lambda$-hyperbolic
points in $\bigcup_{i\in\N} T_{a_{i}}$ such that \eqref{hypo-u-theoB} holds for every $x\in \Delta_0$ (
remind Remark \ref{rem-Ws-boncontract}). Moreover, $\leb^u(\Delta_0)>0$.

Now, we recall  that the stable holonomy is
absolutely continuous with respect to Lebesgue measure on the unstable manifolds.
Therefore, if $x$ is a density point of $\Delta_0$ for $\leb^u_x$, every $y$ in $\CF^s(x)\cap \Delta_0$ is also a density
point of $\Delta_0$ for $\leb^u_y$. 
Let $\disp \Delta$ be the set of density points in
$\Delta_0$ for $\leb^u$. This set is $F$-invariant and stable by
intersections of stable and unstable leaves. Hence, every point 
$y\in \Delta$ is also a density point of $\Delta$ for $\leb^u_y$.
\end{proof}

For each $i\in \N$, let $S_{a_{i}}$ be the restriction of the
rectangle $T_{a_{i}}$ to $\Delta$, i.e. $$S_{a_{i}}=T_{a_{i}}\cap\Delta.$$
Given $x\in S_{a_{i}}$ we set 
$$W^u(x,S_{a_{i}})= W^u(x,T_{a_{i}})\cap \Delta\quad\text{and}\quad W^s(x,S_{a_{i}})= W^s(x,T_{a_{i}})\cap \Delta.$$ By construction of
$\Delta$, if $x$ and $y$ are in $S_{a_{i}}$, then $[x,y]$ is also in
$S_{a_{i}}$. As before we have
$$\{[x,y]\}=W^s(x,S_{a_{i}})\cap W^u(y,S_{a_{i}})=D^s_{2\beta(\delta)}(x)\cap
D^u_{2\beta(\delta)}(y).$$
These sets $S_{a_{i}}$ are called \emph{rectangles of second generation} and
their collection is denoted by $\CS$.

\subsection{Third generation of rectangles}\label{subset-rect3}

From the beginning we have introduced several constants. It is
time now to fix some of them. For that purpose we  want first to
summarize how do these constants depend on one another.

\subsubsection{The constants}The set $\Lambda$ is fixed and so the
two constants
$\eps_0$ and
$\lambda$ are also fixed. Note that $\eps_{0}$ imposes a constraint to define the local product structure for points in $\Lambda$. 

We fix $\zeta$ as in \eqref{hypo-u-theoB} and pick  $\eps>0$ small compared to $\lambda$ and $\zeta$, namely $\eps<\zeta/10$.

We  choose $\eps_2>0$ sufficiently small such that every $x\in\Delta$ with $\disp
\log\|df^{-1}_{|E^u(x)}\|<-{\zeta}/{3}$ necessarily belongs to $\Omega_3$.
This is possible because  the exponential  expansion or contraction in the unstable and stable directions $k^u$ and $k^s$ vanishe at the same time.

We take $\eps_1>0$ sufficiently small such that  $\Omega_3$ is
a closed subset of
$\Omega$ not intersecting $\overline{\Omega_2\cup
B(S,\eps_1)}$. Therefore, there exists some $\rho>0$ such that
$d(\Omega_3,\overline{\Omega_2\cup B(S,\eps_1)} )>\rho$. 
Then, $\eps_1$ being fixed,  we choose
the size $\delta>0$ for rectangles of first generation as small as wanted. 
In particular, we assume that 
$2\delta<{\rho}/{10}.$

Each symbol $a_{i}$ is associated to a point $\xi_{a_{i}}\in \Lambda\cap \Omega_{0}$ (used for the pseudo-orbit); 
if $\xi_{a_{i}}\in \Omega_{1}$, then we say that the rectangle has \emph{order} 0; if $\xi_{a_{i}}\in \Omega_{2}$ and $n^{+}(\xi_{a_{i}})=n>1$ or  $n^{-}(\xi_{a_{i}})=n>1$, then we say that the rectangle has  \emph{order}~$n$.

\begin{remark}
\label{rem-type00}
We observe that for each $n\ge 0$, there are only finitely many rectangles of order $n$; the   union of rectangles of order 0 covers $\Lambda\cap \Omega_{1}$ and the union of rectangles of orders $>1$ cover $\Lambda\cap\Omega_{2}$. 
The union of rectangles of order 0 can be included in a neighborhood of size $2\delta$ of~$\Omega_{1}$; the union of rectangles of higher order can be included into a neighborhood of $\Omega_{2}$ of size $2\delta$. 
\end{remark}

\subsubsection{The rectangles}
Remind that each rectangle of second generation $S_{a_{i}}\in \CS$ is obtained as a thinner rectangle of $T_{a_{i}}$. It thus inherits the order of $T_{a_{i}}$. 
As $\delta>0$ is taken small, none
of the rectangles of second generation and of order 0 which
intersects  $\Omega_3$ can intersect some rectangle of order
$n>1$. We say that a rectangle of order 0 that intersects
$\Omega_3$ is of order 00.

Therefore, each rectangle of order 00
intersects only with a finite number of other rectangles. Hence,
we can cut them as in
\cite{bowen}: let $S_{a_{i}}$ be a rectangle of order 00 and $S_{a_{j}}$ be
any other rectangle such that $S_{a_{i}}\cap S_{a_{j}}\not=\emptyset$. We set
\begin{eqnarray*}
S_{{a_{i}}{a_{j}}}^1&=&\{x\in S_{a_{i}}:\; W^u(x,S_{a_{i}})\cap S_{a_{j}}=\emptyset\quad\mbox{and}\quad
W^s(x,S_{a_{i}})\cap S_{a_{j}}=\emptyset\},\\
S_{{a_{i}}{a_{j}}}^2&=&\{x\in S_{a_{i}}:\; W^u(x,S_{a_{i}})\cap S_{a_{j}}\not=\emptyset\quad\mbox{and}\quad W^s(x,S_{a_{i}})\cap S_{a_{j}}=\emptyset\},\\
S_{{a_{i}}{a_{j}}}^3&=&\{x\in S_{a_{i}}:\; W^u(x,S_{a_{i}})\cap S_{a_{j}}\not=\emptyset\quad\mbox{and}\quad W^s(x,S_{a_{i}})\cap S_{a_{j}}\not=\emptyset\},\\
S_{{a_{i}}{a_{j}}}^4&=&\{x\in S_{a_{i}}:\; W^u(x,S_{a_{i}})\cap S_{a_{j}}=\emptyset\quad\mbox{and}\quad
W^s(x,S_{a_{i}})\cap S_{a_{j}}\not=\emptyset\}.
\end{eqnarray*}
Define $\CS_0$ as the set of points in $M$ belonging to some $S_{a_{i}}$ of order 00.  For $x\in\CS_0$ we set
$$\CR(x)=\{y\in M, \forall i,j\in \N, x\in
S_{{a_{i}}{a_{j}}}^k\Rightarrow y\in S_{{a_{i}}{a_{j}}}^k\}.$$
This defines a partition $\CR$ of $\CS_0$. By construction this partition is
finite and each of its elements  is stable under the map $[\,.\,,\,.\,]$. In other words, each element of $\CR$  is a rectangle.  These sets are
called \emph{rectangles of third generation}. If $x\in R_i\in \CR$, we set
$$W^u(x,R_i)= D^u_{\eps_{0}}(x)\cap R_i\quad \text{and}\quad
W^s(x,R_i)= D^s_{\eps_{0}}(x)\cap R_i.$$
 By construction,  if $x$ is a point in a rectangle of second generation $S_{a_{i_{1}}},\ldots S_{a_{i_{p}}}$, then
$$W^{u,s}(x,R_i)=W^{u,s}(x,S_{a_{i_{k}}})\cap R_i\quad\text{for $1\leq k\leq p$.}$$
Moreover, the 
next result follows as in \cite[Proposition 3.9]{leplaideur-aaa}, which means that the family $\CR$ is a Markov partition of $\CS_0$.
\begin{proposition}\label{prop-rec-mark-3}
Let $R_i$ and $R_j$ be  rectangles of third generation, $n\ge 1$ be
some integer and
$x\in R_i\cap f^{-n}(R_j)$. Then,
$$
f^n(W^u(x,R_i)) \supset  W^u(f^n(x),R_j)\quad\text{and}\quad
 f^n(W^s(x,R_i))\subset W^s(f^n(x),R_j).
$$
\end{proposition}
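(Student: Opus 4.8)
The plan is to reduce the statement to the almost-Markov property of the rectangles of first generation (the second displayed item in the list following the construction of the $T_{a_n}$) together with the definition of the third-generation partition $\CR$ as a refinement of the $S_{a_i}$'s by the sets $S^k_{a_ia_j}$. First I would observe that it suffices to prove the claim for $n=1$ and then iterate: if $x\in R_i\cap f^{-1}(R_{j_1})\cap\cdots\cap f^{-n}(R_{j_n})$ with $R_{j_k}\in\CR$, then applying the $n=1$ case successively along the orbit $x,f(x),\dots,f^{n-1}(x)$ gives the inclusions $f^n(W^u(x,R_i))\supset W^u(f^n(x),R_{j_n})$ and $f^n(W^s(x,R_i))\subset W^s(f^n(x),R_{j_n})$, using that the local unstable (resp. stable) leaves inside a rectangle of third generation are exactly $D^u_{\eps_0}(\cdot)\cap R$ (resp. $D^s_{\eps_0}(\cdot)\cap R$), and that $f$ expands along $E^u$ and contracts along $E^s$ so the images stay inside the relevant $\eps_0$-disks (this is where the $(\eps_0,\lambda)$-regularity and the smallness of the rectangle size $\delta$ compared to $\rho$ and $\eps_0$ are used, exactly as in the choice of constants in Subsection~\ref{subset-rect3}).

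Second, for the $n=1$ step I would proceed in two stages, first at the level of second-generation rectangles and then descend to third generation. At the level of the $S_{a_i}$, the almost-Markov property of the first-generation rectangles says that whenever $x=\Theta(\ul x)$ with $\ul x = x(0)x(1)\cdots$, one has $F(W^u(x,T_{x(0)}))\subset W^u(F(x),T_{x(1)})$ and dually for the stable leaves; restricting to $\Delta$ (which is $F$-invariant) turns this into the corresponding statement for $W^{u,s}(x,S_{a_i})$. One then upgrades the ``$\subset$'' on the unstable side to ``$\supset$'' by the standard Bowen argument: because the rectangles respect the local product structure and $F$ is (on $\Theta(\Sigma_0)$) essentially the first return map, the image $F(W^u(x,S_{a_i}))$ is a full unstable slice of $S_{a_j}$ through $F(x)$, hence equals $W^u(F(x),S_{a_j})$; combined with the other inclusion this gives the Markov property at second generation. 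This is precisely the content invoked as ``\cite[Proposition~3.9]{leplaideur-aaa}'', and I would either cite it or reproduce the short holonomy/product-structure argument.

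Third, I would descend to the third generation. A rectangle $R_i\in\CR$ is, by construction, an intersection $\bigcap S_{a_{i_k}}$ of second-generation rectangles further cut by the defining condition of $\CR$ (the class of $x$ under the relation ``$x\in S^k_{a_ia_j}\Rightarrow y\in S^k_{a_ia_j}$ for all $i,j$''), and one has the identity $W^{u,s}(x,R_i)=W^{u,s}(x,S_{a_{i_k}})\cap R_i$ recorded just before the statement. So it remains to check that $f^n$ maps the ``$\CR$-label'' of $x$ to the $\CR$-label of $f^n(x)$ consistently along unstable leaves: if $y\in W^u(f^n(x),R_j)$, I must produce $z\in W^u(x,R_i)$ with $f^n(z)=y$. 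Pull $y$ back inside the second-generation rectangle using the second-generation Markov property to get a candidate $z$; then one checks that $z$ lies in the same element of $\CR$ as $x$, which follows because the cutting sets $S^k_{a_pa_q}$ are themselves subrectangles (stable under $[\cdot,\cdot]$) and the Markov property at second generation, applied to each pair $(a_p,a_q)$ appearing, forces $z$ and $x$ to fall in the same piece $S^k_{a_pa_q}$. The stable side is the mirror image, with ``$\subset$'' throughout and no need for surjectivity.

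The main obstacle I expect is this last bookkeeping step: verifying that the third-generation cut is genuinely ``compatible with the dynamics'', i.e. that the finitely many cutting conditions defining $\CR(x)$ are preserved (in the appropriate one-sided sense, $\supset$ for unstable and $\subset$ for stable) when one applies $f^n$. This is exactly the delicate point in Bowen's original refinement argument, and here it is slightly more subtle because the ambient ``rectangles'' $S_{a_i}$ are only defined on the non-uniformly hyperbolic set $\Delta$ and the first-generation rectangles satisfy only the weaker ``almost Markov'' property tied to the existence of a connecting code; one must make sure that the codes needed to witness the relevant inclusions are available, which is ultimately guaranteed by $F$ acting as a genuine subshift on $\Sigma_0$ and by $\Theta(\Sigma_0)\supset\Lambda\cap\Omega_0\supset\Delta$. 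Once that compatibility is in hand, Proposition~\ref{prop-rec-mark-3} follows formally, and since the argument is structurally identical to that of \cite[Proposition~3.9]{leplaideur-aaa} — the $\liminf$-versus-$\limsup$ change having already been absorbed into the construction of $\Delta$ via Lemma~\ref{lem-hyperliminfmedida} and Proposition~\ref{prop-omega3} — the proof reduces to citing that reference and indicating the (unchanged) key points above.
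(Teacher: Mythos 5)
Your overall route --- almost-Markov property of the first-generation rectangles, restriction to $\Delta$ to get the second generation, then the Bowen cutting argument for $\CR$ --- is the right one, and it is what the paper itself does by deferring to \cite[Proposition~3.9]{leplaideur-aaa}. But your very first step, the reduction to $n=1$, has a genuine gap. You assume $x\in R_i\cap f^{-1}(R_{j_1})\cap\cdots\cap f^{-n}(R_{j_n})$ with every intermediate iterate $f^k(x)$ lying in some third-generation rectangle $R_{j_k}$. The hypothesis of the proposition gives no such thing: it only says $x\in R_i$ and $f^n(x)\in R_j$. The intermediate iterates typically do \emph{not} lie in $\CS_{0}$ --- they may be inside $B(S,\eps_1)$, where there are no rectangles at all, or in $\Omega_0$ but in rectangles of order $>1$ or outside $\Omega_3$, hence not of order $00$. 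So the chain of $n=1$ steps you propose cannot be set up, and the induction collapses exactly in the situation the whole construction is designed to handle (long excursions near $S$).

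The correct reduction is over the induced map $F$, not over $f$. Since $x$ and $f^n(x)$ both lie in $\Theta(\Sigma_0)\cap\Omega_0$, one has $f^n(x)=F^m(x)$ for some $m$, where each $F$-step either is a single $f$-step inside $\Omega_1$ or swallows a whole excursion through $B(S,\eps_1)$ in one go; the almost-Markov property of the first-generation rectangles is stated precisely for $F^j$ along a code $\ul x\in\Sigma_0$, so it applies directly to this decomposition. From there your second and third paragraphs go through essentially unchanged: backward contraction on $E^u$ upgrades the unstable inclusion to $\supset$ at the level of the $S_{a_i}$, and the compatibility of the cutting sets $S^k_{a_pa_q}$ with $[\cdot,\cdot]$ and with the second-generation Markov property gives the statement for $\CR$. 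One further small caution: at the third-generation level you should only claim $f^n(W^u(x,R_i))\supset W^u(f^n(x),R_j)$, not equality, since the cutting may remove part of the image slice. With the induction rerouted through $F$ and the connecting codes (available because $\Delta\subset\Lambda\cap\Theta(\Sigma_0)$), the argument is the one intended by the citation to \cite[Proposition~3.9]{leplaideur-aaa}.
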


We also emphasize that, by construction, every point in a rectangle of third generation $\CR$ is a density point for $\CR$ and with respect to the unstable Lebesgue measure $Leb^{u}$.

\section{Proof of Theorem \ref{Theorem A}}\label{sec-proofthA}
In this section we prove the existence of a finite  SRB measure. In the first step we define hyperbolic times to be able to control de distortion of $\logju$ along orbits. These hyperbolic times define an induction into $\CS_{0}$ and we construct an invariant measure for this induction map in the second step of the proof. Here we also need to extend the construction to $\ol{\CS_{0}}$. In the last step, we prove that the return time is integrable and this allow to define the $f$-invariant  SRB measure. 
\subsection{Hyperbolic times}
Observe that every point in $ \CS_0$  is a  $\lambda$-hyperbolic
point and then, it  returns infinitely many often in $\Omega_3$.
Hence, every point in $\CS_0$ returns infinitely many often in
$\CS_0$.

Let  $x\in \CS_0$ and $n\in\N^*$ be such that $f^n(x)\in \CS_0$. Then, there exist two rectangles of third generation $R_l$
and $R_k$ such that $x \in R_l$ and $f^n(x)\in R_k$.
Thus, the Markov property given by Proposition~\ref{prop-rec-mark-3}
implies
$$f^{-n}(W^u(f^n(x),R_k))\subset W^u(x,R_l).$$
To achieve our goal we 
need some uniform distortion bound on
$$\prod_{k=0}^{n-1}\frac{J^u(f^k(x))}{J^u(f^k(y))},$$
where $y\in f^{-n}(W^u(f^n(x),R_k))$ and $J^u(z)$ is the
unstable Jacobian $\det df_{|E^u(z)}(z)$.
%
%
To obtain distortion bounds we will use the notion of hyperbolic
times introduced in~\cite{alves}.

\begin{definition}
Given $0<r<1$, we say that $n$ is a \emph{$r$-hyperbolic time}
for
$x$ if for every $1\leq k\leq n$
$$\prod_{i=n-k+1}^{n}\|df^{-1}_{|E^u(f^i(x))}\|\leq r^k.$$
\end{definition}

It follows from \cite[Corollary 3.2]{alves-bonatti-viana} that if
$$\liminf_\ninf
\frac1n\sum_{j=0}^{n-1}\log\|df^{-1}_{|E^u(f^j(x))}(f^j(x))\|<
2\log r,$$
then there exist infinitely  many $r$-hyperbolic times for
$x$.
Therefore, by construction of
$\Delta$, taking $r_0=e^{-\frac13\zeta}$ we have that for every $x$ in $\CS_0$,
there exist infinitely many
$r_0$-hyperbolic times for $x$.

\begin{remark}
\label{rem-densitehyperbotimes}
Another important consequence we shall use later is that there is a set with positive density of $r_{0}$-hyperbolic times. 
\end{remark}

\begin{lemma}\label{lem-hyper-hyperperto}
There exists $\delta'>0$ such that, if $\delta<\delta'$, then , for every $x$ in $\CS_0$, for every $r_0$-hyperbolic times for $x$, $n$, and for every  $y$ in $B_{n+1}(x,4\delta)$, the integer $n$ is also a $\sqrt r_0$-hyperbolic time for $y$.
\end{lemma}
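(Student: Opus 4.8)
The plan is to transfer the $r_0$-hyperbolic time inequalities from $x$ to a nearby point $y$ by controlling, at each step $f^i(x)$ versus $f^i(y)$, the ratio of the norms $\|df^{-1}_{|E^u(f^i(y))}\|$ and $\|df^{-1}_{|E^u(f^i(x))}\|$. First I would recall that, since $x$ and $y$ lie in the same rectangle of first (hence third) generation and $y\in B_{n+1}(x,4\delta)$, the whole orbit segments $f^i(x)$ and $f^i(y)$ for $0\le i\le n$ stay $4\delta$-close, and in particular they remain in the fixed open set $U$ where the hyperbolic splitting $E^u\oplus E^s$ is defined and Hölder continuous with uniformly bounded Hölder constant. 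Using the Hölder continuity of $x\mapsto E^u(x)$ together with the uniform continuity of $df$ on the compact manifold (and $f\in C^{1+}$, so $df$ is Hölder), I would produce a modulus-of-continuity estimate: there is a function $\omega(\delta)\to 0$ as $\delta\to 0$ such that
$$\left|\log\|df^{-1}_{|E^u(f^i(y))}\|-\log\|df^{-1}_{|E^u(f^i(x))}\|\right|\le\omega(4\delta)$$
whenever $d(f^i(x),f^i(y))\le 4\delta$.

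Next I would fix $\delta'>0$ small enough that $\omega(4\delta')\le\tfrac16\zeta$; recall $r_0=e^{-\zeta/3}$, so $\sqrt{r_0}=e^{-\zeta/6}$ and $r_0\,e^{\omega(4\delta)}\le e^{-\zeta/3}e^{\zeta/6}=e^{-\zeta/6}=\sqrt{r_0}$. Then, for any $r_0$-hyperbolic time $n$ for $x$ and any $1\le k\le n$,
$$\prod_{i=n-k+1}^{n}\|df^{-1}_{|E^u(f^i(y))}\|\le e^{k\,\omega(4\delta)}\prod_{i=n-k+1}^{n}\|df^{-1}_{|E^u(f^i(x))}\|\le e^{k\,\omega(4\delta)}r_0^{k}\le(\sqrt{r_0})^{k},$$
which is exactly the statement that $n$ is a $\sqrt{r_0}$-hyperbolic time for $y$. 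The constant $\delta'$ depends only on $\zeta$ and on the modulus $\omega$, i.e. only on $f$ and $\Lambda$, not on $x$, $y$ or $n$, as required; I should also make sure $\delta'$ is compatible with the earlier smallness constraints on $\delta$ (e.g. $2\delta<\rho/10$), taking the minimum.

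The main obstacle is the first step: getting a clean, orbit-length-independent bound of the form $|\log\|df^{-1}_{|E^u(f^i(y))}\|-\log\|df^{-1}_{|E^u(f^i(x))}\||\le\omega(4\delta)$ that is uniform in the point and in the iterate $i$. This requires combining the uniform Hölder bound on the unstable distribution (so that $E^u(f^i(y))$ is close to $E^u(f^i(x))$ as a subspace, with a bound depending only on $d(f^i(x),f^i(y))\le 4\delta$ and the fixed Hölder constant) with the Hölder continuity of $df$, and then checking that the operator norm of the restriction of $df^{-1}$ to a nearby subspace differs by at most a modulus of continuity of the distance between the subspaces — here one uses that these norms are bounded below away from $0$ on the compact invariant set, so the logarithm is Lipschitz in the norm. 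Once this uniform distortion-type estimate is in hand, the rest is the short telescoping computation above.
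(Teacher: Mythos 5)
Your proposal is correct and follows essentially the same route as the paper: use (uniform) continuity of $df$ and of $E^u$ to bound the ratio $\|df^{-1}_{|E^u(f^i(y))}\|/\|df^{-1}_{|E^u(f^i(x))}\|$ by $e^{\eps}$ for points $4\delta'$-close, note that $y\in B_{n+1}(x,4\delta)$ keeps the orbit segments close, and multiply over the window $n-k+1,\dots,n$, with $\eps\le\zeta/6$ turning $r_0$ into $\sqrt{r_0}$. Your version merely makes explicit the threshold that the paper leaves as ``$\eps$ very small compared to $\zeta$''.
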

\begin{proof}
Pick $\eps>0$  small compared to $\zeta$. 
By continuity of $df$ and $E^u$, there exists some $\delta'>0$ such that for all $x\in U$ and all $y\in B(x,4\delta')$, then
$$e^{-\eps}<\frac{\|df^{-1}_{|E^u(x)}(x)\|}{\|df^{-1}_{|E^u(y)}(y)\|}<e^{\eps}.$$
  Let us assume that $\delta<\delta'$. Take $x_0\in \CS_0$ and $n$ an $r_0$-hyperbolic time for $x$. Given $y\in B_{n+1}(x,4\delta)$, then for every $0\leq k\leq n$, we have that $f^{k}(y)\in B(f^{k}(x),4\delta)\subset B(f^{k}(x),4\delta')$, which means that for every $0\leq k\leq n$
\begin{equation}\label{equ-patch1.sept03}
e^{-\eps}<\frac{\|df^{-1}_{|E^u(f^k(x))}(f^k(x))\|}{\|df^{-1}_{|E^u(f^k(y))}(f^k(y))\|}<e^{\eps}.
\end{equation}
 The real number $\eps$ is very small compared to $\zeta$, thus (\ref{equ-patch1.sept03}) proves that for every $0\leq k\leq n$,
$$\|df^{k-n}_{E^u(f^{k}(y))}\|<(\sqrt{r_0})^{n-k}.$$
This proves that $n$ is a $\sqrt r_0$-hyperbolic time for every $y$.
\end{proof}

From here on we assume
that $\delta<\delta'$.
\begin{lemma}\label{lem-hyperretur-cs0}
If $x\in\CS_0$ and $n\ge 1$ is an $r_0$-hyperbolic time
for $x$, then $f^n(x)\in\CS_0$.
\end{lemma}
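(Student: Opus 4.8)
The statement to prove is that if $x \in \CS_0$ and $n \ge 1$ is an $r_0$-hyperbolic time for $x$, then $f^n(x) \in \CS_0$. The plan is to unwind what membership in $\CS_0$ means and show it is inherited at hyperbolic times. Recall $\CS_0$ is the union of rectangles of second generation of order $00$, i.e.\ rectangles $S_{a_i}$ that are of order $0$ (their base point $\xi_{a_i}$ lies in $\Omega_1$) and that meet $\Omega_3$. Since $x \in \CS_0$, fix such a rectangle $S_{a_i} \ni x$, so $x \in \Delta$ and in particular $x$ is a $\lambda$-hyperbolic point of integration lying within distance $2\delta$ of $\Omega_1$, hence $f(x),\dots,f^{n}(x)$ stay, via the coding, in the orbit described by the shift $(\Sigma_0,\sigma)$, and the first-return dynamics $F$ is just iteration of $f$ along the $\Omega_1$-part of the orbit.

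First I would use the defining inequality of an $r_0$-hyperbolic time with $k=1$: it gives $\|df^{-1}_{|E^u(f^n(x))}\| \le r_0 = e^{-\zeta/3}$, that is $\log \|df^{-1}_{|E^u(f^n(x))}\| \le -\zeta/3 < -\zeta/3$ up to the (harmless) strictness, so by the choice of $\eps_2$ made in Subsection~\ref{subset-rect3} — namely that every point of $\Delta$ whose backward unstable contraction rate beats $-\zeta/3$ must lie in $\Omega_3$ — we conclude $f^n(x) \in \Omega_3$. (Here one also uses that $f^n(x) \in \Delta$, which holds because $\Delta$ is $F$-invariant and, as noted in the proof of Lemma~\ref{lem-hyperretur-cs0}'s predecessors, every point of $\CS_0$ returns to $\CS_0$ infinitely often, so in particular $f^n(x)$, being a forward iterate landing in $\Omega_3 \subset \Omega_0$, is an $F$-iterate of $x$ and stays in $\Delta$.) Next, since $f^n(x) \in \Omega_3$, it lies in some rectangle of second generation $S_{a_j}$, and because $f^n(x) \in \Omega_3$ forces, via the separation estimate $d(\Omega_3, \overline{\Omega_2 \cup B(S,\eps_1)}) > \rho$ together with $2\delta < \rho/10$, that $S_{a_j}$ cannot be of order $> 1$: a rectangle of order $>1$ sits within $2\delta$ of $\Omega_2$, hence within distance $\rho$ of $\Omega_3$-meeting points, contradicting the gap. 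Therefore $S_{a_j}$ is of order $0$ and meets $\Omega_3$ (it contains $f^n(x)$), so it is of order $00$, and thus $f^n(x) \in \CS_0$.

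The main obstacle, and the step deserving the most care, is the passage from "$n$ is a hyperbolic time" to "$f^n(x) \in \Omega_3$": one must check that the $k=1$ hyperbolic-time inequality really does certify membership in $\Omega_3$ given the way $\eps_2$ was pinned down, and that the strict-versus-nonstrict inequality in the definition of $\Omega_3$ (which uses $\ge \eps_2\lambda$) is compatible with $r_0 = e^{-\zeta/3}$ and the chosen relation $\eps_2\lambda \le \zeta/3$. The remaining ingredient — that the rectangle containing $f^n(x)$ is forced to be of order $00$ rather than of high order — is exactly the geometric separation built into the constants in Subsection~\ref{subset-rect3}, so once the constants are invoked in the right order the argument closes. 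I would also remark that $f^n(x) \in \CS_0$ is consistent with, and indeed a sharpening of, the already-observed fact that points of $\CS_0$ return to $\CS_0$ infinitely often; the new content is that the return happens precisely at $r_0$-hyperbolic times, which is what makes the induced map well defined on $\CS_0$.
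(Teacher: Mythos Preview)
Your proposal is correct and follows the same line as the paper's proof: the key step in both is to apply the $k=1$ case of the hyperbolic-time inequality to get $\|df^{-1}_{|E^u(f^n(x))}\| \le r_0 = e^{-\zeta/3}$, and then invoke the choice of $\eps_2$ to conclude $f^n(x)\in\Omega_3$, whence $f^n(x)\in\CS_0$. The paper's proof is extremely terse (three lines) and leaves the passage from $\Omega_3$ to $\CS_0$ implicit, whereas you spell out why $f^n(x)$ stays in $\Delta$ and why the second-generation rectangle containing it must be of order~$00$ via the separation $d(\Omega_3,\overline{\Omega_2\cup B(S,\eps_1)})>\rho$ and $2\delta<\rho/10$; this is exactly the intended justification, just made explicit.
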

\begin{proof}
By definition of $r_0$-hyperbolic time,
$\disp\|df^{-1}_{|E^u(f^n(x))}\|<r_0=e^{-\frac13\zeta}$, which
implies that $f^n(x)\in \Omega_3$ (by definition of
$\eps_2$). Hence, $f^n(x)\in \CS_0$.
\end{proof}

Conversely, if $x$ and $f(x)$ belong to $\CS_0$, then 1 is a
$r_0$-hyperbolic time for $x$. Therefore, we say that a $r_0$-hyperbolic
time
for $x$ is
a {\it hyperbolic return  in $\CS_0$}.
Moreover, the Markov property of $\CR$ proves that, if
$n$ is a $r_0$-hyperbolic time for $x$, then there exists
$R_k\in \CR$ such that $f^n(x)$ is in $R_k$ and we have
$$f^{-n}(W^u(f^n(x),R_k))\subset \CS_0.$$
\subsection{Itinerary and cylinders}
For our purpose, we need to make precise what itinerary and cylinder mean. 
The rectangles of third generation have been built by intersection some rectangles of second generations (of type 00). Rectangles of second generation are restriction of rectangles of first generation. We remind that the set of rectangles of first generation satisfies an ``almost'' Markov property. 

Then, we say that two pieces of orbits $x,f(x),\ldots, f^{n}(x)$ and $y,f(y),\ldots, f^{n}(y)$ have the same itinerary if $x$ and $y$ have the same codes in $\S_{0}$ up to the time which represent $n$. As the dynamics in $\S_{0}$ is semi-conjugated to the one induced by $F$, this time may be different to $n$. 

The points in the third generation of rectangles having the same itinerary until some return time into $\CS_{0}$ define {\em cylinders}.


\subsection{SRB measure for the induced map}
For $x\in \CS_0$, we set  $\tau(x)=n$  if for some point $y$ in the same cylinder than $x$, $n$ is a $r_{0}$-hyperbolic time for $y$.
This allows us to define a map $g$ from $\CS_0$ to itself by $g(x)=
f^{\tau(x)}(x)$.

\begin{remark} As $\tau$ is not the first return time, the map $g$ is {\it a priori} not one-to-one. \end{remark}

As usual we set $$\tau^{1}(y)=\tau(y)\quad\text{and}\quad\tau^{n+1}(y)=\tau^{n}(y)+\tau(g^{n}(y)).$$ The $n$-cylinder  for $x$ will be the cylinder associated to $\tau^{n}(x)$.

Due to the construction of $\CS_{0}$, we have a finite set of  disjoint rectangles $R_{k}$,  a Markov map 
$$g:\cup R_{k}\to\cup R_{k},$$
Markov in the sense of Proposition \ref{prop-rec-mark-3}, and every point in every $R_{k}$ is a $\leb^{u}$ density point of $R_{k}$. Recall that $\CR(x)$ means $R_{k}$ if $x\in R_{k}$. 

The map $g$ can be extended to some points of $\ol{\CS_{0}}$. Note that $\ol{\CS_{0}}$ still have a structure of rectangle\footnote{A local sequence of (un)stable leaves $W^{u}(x_{n},R_{k})$ converges to a graph.}. More precisely, the extension of $g$ is defined for the closure of 1-cylinders: if $x$ belongs to $\CS_{0}$ and $g(x)=f^{n}(x)$, then we can define $g$ for all points $y$ in $f^{-n}\left(\ol{W^{u}(f^{n}(x),\CR(f^{n}(x)))}\right)\subset \ol{\CS_{0}}$ by 
$$g(y)=f^{n}(y).$$ 
By induction, $g^{n}$ is defined for the closure of $n$-cylinders. These points naturally belong to $\ol{\CS_{0}}$ but $\ol{\CS_{0}}$ is strictly bigger than the closure of $n$-cylinders.

We use the method in \cite{Young-Hu}, which  uses results from Section 6 in \cite{ledrap1}. Let $x_{0}$ be some point in~$\CS_{0}$ and set $\leb^{u}_{0}$ the restriction  and renormalization of $\leb^{u}_{x_{0}}$ to $\ol{W^{u}(x_{0},\CR(x_{0}))}$. We define
$$\mu_{n}= \frac1n\sum_{i=0}^{n-1}g^i_{*}(\leb^u_{0}).$$
We want to consider some accumulation point for $\mu_{n}$. The next lemma shows they are well-defined and $g$-invariant. 
\begin{lemma}
\label{lem-muweakstar-g}
There exists a natural way consider an accumulation point $\mu$ for $\mu_{n}$. It is a $g$-invariant measure.
\end{lemma}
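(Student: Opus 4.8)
\textbf{Proof plan for Lemma~\ref{lem-muweakstar-g}.}
The plan is to extract an accumulation point of the sequence $\mu_n = \frac1n\sum_{i=0}^{n-1}g^i_*(\leb^u_0)$ in a topology suited to the induced dynamics, and then to check $g$-invariance by the usual Krylov--Bogolyubov telescoping, taking care that $g$ fails to be continuous (it is only piecewise defined on cylinders) and that $\ol{\CS_0}$ is not compact for the ambient metric in a way that would automatically give tightness. First I would observe that each measure $g^i_*(\leb^u_0)$ is supported on $\ol{\CS_0}$ and, by the Markov property of Proposition~\ref{prop-rec-mark-3} together with the bounded-distortion estimates coming from $r_0$-hyperbolic times (Lemma~\ref{lem-hyper-hyperperto}), its restriction to each rectangle $R_k$ of third generation is absolutely continuous with respect to $\leb^u$ along unstable leaves with densities that are uniformly bounded above and below on the pieces where they are positive. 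Concretely, $g^i_*(\leb^u_0)$ is a finite convex combination of normalized unstable-Lebesgue measures on images of cylinders, each carrying a density controlled by the distortion constant; hence the family $\{\mu_n\}$ is tight (the total mass is $1$, and no mass escapes because $\ol{\CS_0}$ is a closed subset of the compact manifold $M$).

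Next I would pass to a subsequence $\mu_{n_j}$ converging weakly-$*$ in the space of Borel probability measures on $M$ to some $\mu$; since $\ol{\CS_0}$ is closed, $\mu$ is supported on $\ol{\CS_0}$. The ``natural way'' alluded to in the statement is precisely this: one records not just the measure but also, on each rectangle, the family of conditional densities along unstable leaves, and the uniform distortion bounds guarantee that these pass to the limit, so $\mu$ again disintegrates into unstable conditionals absolutely continuous with respect to $\leb^u_x$. This is the structural content that will be needed downstream (it is what eventually makes the opened-out measure an SRB measure), and it is worth isolating here.

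For $g$-invariance I would argue as follows. For any bounded continuous $\phi:M\to\R$,
$$
\int \phi\circ g\, d\mu_n - \int \phi\, d\mu_n
= \frac1n\Big( \int \phi\circ g^{\,n}\, d\leb^u_0 - \int \phi\, d\leb^u_0\Big),
$$
so the right-hand side tends to $0$. The subtlety is that $\phi\circ g$ is not continuous, so weak-$*$ convergence of $\mu_{n_j}$ does not immediately give $\int\phi\circ g\,d\mu_{n_j}\to\int\phi\circ g\,d\mu$. The standard fix, which I would carry out, is that the discontinuity set of $g$ is contained in the (countable) union of the boundaries of the cylinders, and one checks that $\mu$ gives zero mass to this set: boundaries of cylinders lie in stable/unstable boundaries of the finitely many rectangles $R_k$, and the uniform absolute continuity of the unstable conditionals (no atoms along unstable leaves, and the stable boundary has empty interior in each leaf) forces $\mu(\bd R_k)=0$ for every $k$, hence $\mu$-a.e.\ point is a continuity point of $g$. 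By the portmanteau theorem applied to the $\mu$-continuity set, $\int \phi\circ g\, d\mu_{n_j}\to\int\phi\circ g\, d\mu$, and therefore $\int\phi\circ g\,d\mu=\int\phi\,d\mu$ for all bounded continuous $\phi$, i.e.\ $g_*\mu=\mu$.

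\textbf{Main obstacle.} The only genuinely delicate point is the continuity-set argument: one must verify that accumulated mass does not concentrate on the cylinder boundaries where $g$ jumps. This is where the density-point construction of $\CS_0$ (every point of each $R_k$ is a $\leb^u$-density point of $R_k$) and the absolute continuity of the stable holonomy earn their keep — they are exactly what prevents the $\mu_n$ from piling up unstable mass on the boundary of a rectangle, and hence what makes $\mu(\bd R_k)=0$. Everything else is a routine Krylov--Bogolyubov argument once tightness and this boundary estimate are in hand.
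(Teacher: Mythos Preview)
Your approach is workable but differs from the paper's, and the paper's route sidesteps precisely the delicate point you flag as the main obstacle.

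The paper does not work directly on $M$. Instead it observes that the finitely many third-generation rectangles $R_1,\ldots,R_N$ give each $g$-orbit a code in the compact shift space $\{1,\ldots,N\}^{\Z}$, lifts the sequence $\mu_n$ there, and extracts a weak-$*$ accumulation point in symbolic space. Since the shift is \emph{continuous} on $\{1,\ldots,N\}^{\Z}$, Krylov--Bogolyubov is immediate with no discontinuity issues; the limit is then pushed back to $M$ (its support automatically lies in the intersection of closures of $n$-cylinders for all $n$, where $g$ is defined). This is the ``natural way'' the lemma alludes to.

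Your direct approach on $\ol{\CS_0}$ requires the portmanteau argument for $\phi\circ g$, and here your claim that ``boundaries of cylinders lie in stable/unstable boundaries of the finitely many rectangles $R_k$'' is not accurate: cylinder boundaries are $f$-preimages of rectangle boundaries, and there are countably many of them, not contained in $\bigcup_k\partial R_k$. To make your argument go through you must first establish that $\mu$ has absolutely continuous unstable conditionals (so that each cylinder boundary, being a Lebesgue-null subset of each unstable leaf, has $\mu$-measure zero), and then sum over the countable family. This is doable---it is essentially the content the paper proves \emph{after} this lemma, in the discussion around \eqref{jaco-borne} and Lemma~\ref{lem-mu-cs0}---but it front-loads the SRB-type distortion estimate into the existence proof, whereas the symbolic lift lets one separate the two steps cleanly. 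Your route buys directness; the paper's buys a clean decoupling of ``existence of an invariant measure'' from ``structure of its conditionals''.
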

\begin{proof}

To fix notations, let us assume that there are $N$ rectangles of third generation, $R_{1},\ldots,  R_{N}$. Each point in $\CS_{0}$ produces a code in $\{1,\ldots N\}^{\Z}$ just by considering its trajectory by  iterations of $g$\footnote{Actually the backward orbit of $g$ is not well defined. The projection is not  {\em  a priori} one-to-one, and to get a sequence indexed by $\Z$ we have to consider one inverse branch among the several possible pre-image by $g$.}. Nevertheless, it is not clear that every code  in  $\{1,\ldots N\}^{\Z}$ can be associated to a true-orbit in $M$. This can however be done for codes in the closure of the set of codes produces by $\CS_{0}$. This set has for projection the set of points which are in the closure of $n$-cylinders for every $n$. 

Now, note  that $g_{*}(\leb^{u}_{0})$ has support into the closure of the 1-cylinders. Then, 
the sequence of measures $\mu_{n}$ can be lifted in $\{1,\ldots, N\}^{\Z}$ and we can consider there some accumulation point for the weak* topology. Its support belongs to the set of points belonging to $n$-cylinders for every $n$, and the measure can thus be pushed forward in $M$. It is $g$-invariant. 
\end{proof}

\medskip
In the following we consider an accumulation point $\mu$ for $\mu_{n}$ as defined in Lemma~\ref{lem-muweakstar-g}.

%
%
%
%

We want to prove that $\mu$ is a  SRB measure. 
First, let us make precise what  SRB means for $\mu$. We remind that being  SRB means that the conditional measures are equivalent to the Lebesgue measure $\leb^{u}$ on the unstable leaves. This can be defined for any measure, not necessarily the $f$-invariant ones, and only requires that the partition into pieces of unstable manifolds is measurable (see \cite{rohlin1}).

\medskip
To
prove that $\mu$ is a  SRB measure, it is sufficient (and necessary) to prove
that there exists some constant $\chi$, such that for every integer
$n$, for every $y\in g^n(W^u(x_{0},\CR(x_{0})))$ and $y=f^{m}(x)$ for some $x\in W^{u}(x_{0},\CR(x_{0}))$, then
for every $z\in W^u(y,\CR(y))$
\begin{equation}
e^{-\chi}\leq \frac{\prod_{i=0}^{m-1}J^u(f^{-i}(y))}
{\prod_{i=0}^{m-1}J^u(f^{-i}(z))}
\leq e^{\chi}.
	\label{jaco-borne}
\end{equation}
First, recall that we have chosen the map $g$ in relation
with hyperbolic times. Hence we have some distortion bounds.

\begin{lemma}\label{lem-jaco-born-2}
There is $0<\omega_0<1$ such that for all $0\leq k\leq \tau(y)$,
$y\in \CS_0$ and $z\in f^{-\tau(y)}(W^u(g(y),\CR(g(y))))$ 
$$d^u(f^k(z),f^{k}(y))\leq\omega_0^{\tau(z)-k} d^u(g(z),g(y)).$$
\end{lemma}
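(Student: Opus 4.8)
The plan is to prove the contraction estimate in Lemma~\ref{lem-jaco-born-2} as a consequence of the fact that $\tau(y)$ is (essentially) a $\sqrt r_0$-hyperbolic time for every point in the relevant cylinder, combined with the bounded-geometry/backward-contraction property of the stable holonomy inside a rectangle of third generation. First I would set $n=\tau(y)$ and recall that, by definition of $\tau$, there is a point $w$ in the same cylinder as $y$ for which $n$ is a genuine $r_0$-hyperbolic time; by Lemma~\ref{lem-hyper-hyperperto}, since $y$ and $z$ both lie in the $\delta$-neighbourhood of $w$ along the first $n+1$ iterates (they belong to the same $1$-cylinder, whose diameter is controlled by $\delta$), the integer $n$ is a $\sqrt r_0$-hyperbolic time for both $y$ and $z$. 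In particular, for every $1\le j\le n$,
$$\prod_{i=n-j+1}^{n}\|df^{-1}_{|E^u(f^i(y))}\|\le (\sqrt r_0)^{\,j}\qand\prod_{i=n-j+1}^{n}\|df^{-1}_{|E^u(f^i(z))}\|\le (\sqrt r_0)^{\,j}.$$

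Next I would translate these products into a statement about distances along unstable leaves. Since $z\in f^{-n}(W^u(g(y),\CR(g(y))))$, the points $f^n(z)$ and $f^n(y)=g(y)$ lie on the same local unstable leaf $W^u(g(y),\CR(g(y)))$, so $d^u(f^n(z),f^n(y))$ is defined and bounded by $\diam W^u(g(y),\CR(g(y)))\le 2\eps_0$. Applying the mean value inequality along the unstable leaf (the disks are $C^1$ and the metric $d^u$ is Riemannian), for each $0\le k\le n$ one pulls back from time $n$ to time $k$:
$$d^u(f^k(z),f^k(y))\le \Big(\prod_{i=k+1}^{n}\sup_{E^u}\|df^{-1}\|\Big)\, d^u(f^n(z),f^n(y))\le (\sqrt r_0)^{\,n-k}\, d^u(g(z),g(y)),$$
where in the middle step I use that $n$ is a $\sqrt r_0$-hyperbolic time to bound the product of the norms of $df^{-1}$ restricted to the unstable direction along the backward orbit from $n$ down to $k$. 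Here one has to be slightly careful: a hyperbolic time controls $\prod_{i=n-j+1}^{n}\|df^{-1}_{|E^u(f^i)}\|$ for the tail ending at $n$, which is exactly the product $\prod_{i=k+1}^{n}$ appearing above with $j=n-k$, so the estimate is precisely what the hyperbolic-time definition provides; one also uses that along $W^u$ the derivative of $f^{-1}$ maps the tangent line $E^u$ to $E^u$, so the relevant operator norm is the one appearing in the hyperbolic-time condition up to the uniform distortion factor already absorbed into passing from $r_0$ to $\sqrt r_0$.

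Finally I would set $\omega_0=\sqrt r_0\in(0,1)$ and note that replacing the exponent $n-k=\tau(y)-k$ by $\tau(z)-k$ costs nothing, because $y$ and $z$ are in the same cylinder so $\tau(z)=\tau(y)=n$; this gives exactly the asserted inequality $d^u(f^k(z),f^k(y))\le \omega_0^{\tau(z)-k}\,d^u(g(z),g(y))$ for all $0\le k\le \tau(y)$. I expect the main obstacle to be the bookkeeping in the step that converts the hyperbolic-time product bound into the leafwise distance contraction: one must check that the local unstable leaves $W^u(f^k(y),\cdot)$ are uniformly $C^1$-close (so the "mean value" comparison of $d^u$-distances under $f^{-1}$ along the leaf is legitimate with the stated constant), and that the passage from the generic point $w$ realizing the hyperbolic time to the actual points $y,z$ via Lemma~\ref{lem-hyper-hyperperto} really applies — i.e. that all iterates up to time $n$ stay within $4\delta$, which follows from the definition of a cylinder together with $\delta<\delta'$ and the size bounds on the rectangles. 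Everything else is a routine telescoping estimate.
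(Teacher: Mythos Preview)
Your proposal is correct and follows essentially the same route as the paper's own proof: both argue that $\tau(y)$ is a $\sqrt{r_0}$-hyperbolic time for every point of the cylinder $\CC(y)=f^{-\tau(y)}(W^u(g(y),\CR(g(y))))$ via Lemma~\ref{lem-hyper-hyperperto}, and then convert the hyperbolic-time product bound into the leafwise contraction $d^u(f^k(z),f^k(y))\le (\sqrt{r_0})^{\tau(y)-k}d^u(g(z),g(y))$, taking $\omega_0=\sqrt{r_0}$. The only cosmetic difference is that the paper phrases the last step as ``the map $f^{k-\tau(y)}$ restricted to $W^u(g(y),\CR(g(y)))$ satisfies $\|df^{k-\tau(y)}_{|E^u}\|\le(\sqrt{r_0})^{\tau(y)-k}$'' (using that \emph{every} point of $\CC(y)$, not just $y$ and $z$, enjoys the $\sqrt{r_0}$-hyperbolic time), which is exactly the justification your mean-value step needs and which you correctly anticipate in your final paragraph.
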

\begin{proof}
If $y\in\CS_0\cap f^{-1}(\CS_0)$, then $g(y)=f(y)$ and
$y\in \Omega_3$ (far away from $S$).
If $g(y)=f^{\tau(y)}(y)$ with $\tau(y)>1$, then by definition
of $g$, there exists $y'$ such that 
\begin{enumerate}
\item $\tau(y)$ is a $r_0$-hyperbolic time for $y'$; and
\item $y'$ is in $\disp \CC(y)\pardef
f^{-\tau(y)}(W^u(f^{\tau(y)}(y),\CR(f^{\tau(y)}(y))))$.
\end{enumerate}
The diameter of $\CR(f^{\tau(y)}(y))$ is smaller than $2\beta(\delta)$. Hence, by construction of the third generation of rectangles, $\CC(y)$ is included into $B_{\tau(y)+1}(y,4\beta(\delta))$, which gives  that
 $\tau(y)$ is a
$\sqrt{r_0}$-hyperbolic time for every point in
$\CC(y)$. 
Therefore, the map
$$f^{k-\tau(y)}:W^u(f^{\tau(y)}(y),\CR(f^{\tau(y)}(y)))\rightarrow f^k(\CC(y))$$ is a contraction and it satisfies 
$$\|df^{k-\tau(y)}_{|E^u}\|\leq \sqrt{r_0}^{\tau(y)-k}.$$ Thus,
$$d^u(f^k(z),f^k(y))\leq r_0^{\frac{\tau(y)-k}{2}}d^u(g(z),g(y)).$$
\end{proof}

\begin{lemma}\label{lem-jaco-born-3}
There exist some constants $\chi_1>0$ and  $0<\omega<1$ such that
for every $n\geq 1$, for every $g^{n}(y)$ in $g^{n}(W^u(x,\CR(x)))$, for every $z$
in $f^{-\tau^{n}(y)}(W^u(g^n(y),\CR(y)))$ and for every $m\leq n$, we obtain
$$\sum_{j=0}^{\tau^m(y)-1}\Big|\log (J^u(f^j(z)))-\log(J^u(f^j(y)))
\Big|\leq \chi_1\,\omega^{n-m}.
$$
\end{lemma}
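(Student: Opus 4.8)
The plan is to combine the contraction estimate of Lemma~\ref{lem-jaco-born-2} with the H\"older continuity of $x\mapsto E^u(x)$ (hence of $x\mapsto J^u(x)=\det df_{|E^u(x)}$, which is \hold since $f$ is $C^{1+}$) to control each term $|\log J^u(f^j(z))-\log J^u(f^j(y))|$ by a constant times $d^u(f^j(z),f^j(y))^{\alpha}$ for some H\"older exponent $\alpha\in(0,1]$, and then to sum the resulting geometric-type series. First I would fix, for each $m\le n$, the block of indices $j$ lying in $[\tau^{\ell}(y),\tau^{\ell+1}(y))$ for $0\le \ell<m$; on such a block we are looking at one step of the induced map $g$ starting from $g^{\ell}(y)$, and Lemma~\ref{lem-jaco-born-2} (applied at the point $g^{\ell}(y)$, using that $z$ lies in the appropriate pulled-back unstable rectangle) gives
$$d^u(f^j(z),f^j(y))\le \omega_0^{\,\tau^{\ell+1}(y)-j}\,d^u(g^{\ell+1}(z),g^{\ell+1}(y))\le \omega_0^{\,\tau^{\ell+1}(y)-j}\cdot 2\beta(\delta),$$
since $g^{\ell+1}(z)$ and $g^{\ell+1}(y)$ both lie in the same third-generation rectangle, whose diameter is at most $2\beta(\delta)$.

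Next I would sum. Writing $C_0$ for the \hold constant of $\log J^u$ and $\alpha$ for its exponent, the contribution of the $\ell$-th block is bounded by
$$C_0\sum_{j=\tau^{\ell}(y)}^{\tau^{\ell+1}(y)-1}\big(2\beta(\delta)\big)^{\alpha}\,\omega_0^{\,\alpha(\tau^{\ell+1}(y)-j)}\le C_0\big(2\beta(\delta)\big)^{\alpha}\sum_{i\ge 1}\omega_0^{\,\alpha i}=:C_1,$$
a constant independent of $\ell$, $m$ and $n$. But this only gives a bound $C_1 m$, not the claimed $\chi_1\omega^{n-m}$. To get the exponential gain in $n-m$ one uses that $z\in f^{-\tau^{n}(y)}(W^u(g^n(y),\CR(y)))$, i.e. $z$ and $y$ stay in the same $n$-cylinder, so the orbits of $y$ and $z$ remain $\sqrt{r_0}$-contracted by the induced map all the way up to time $\tau^{n}(y)$; hence for a block with index $\ell<m\le n$ we may propagate the contraction forward through the remaining $n-\ell$ returns and obtain the sharper estimate
$$d^u(g^{\ell+1}(z),g^{\ell+1}(y))\le \omega_0^{\,\tau^{n}(y)-\tau^{\ell+1}(y)}\,d^u(g^{n}(z),g^{n}(y))\le (2\beta(\delta))\,\omega_0^{\,n-\ell-1},$$
using $\tau^{n}(y)-\tau^{\ell+1}(y)\ge n-\ell-1$ (each return takes at least one iterate). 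Feeding this into the H\"older estimate, the $\ell$-th block contributes at most $C_1\,\omega_0^{\,\alpha(n-\ell-1)}$, and summing over $0\le \ell<m$ gives
$$\sum_{\ell=0}^{m-1}C_1\,\omega_0^{\,\alpha(n-\ell-1)}\le C_1\,\omega_0^{\,\alpha(n-m)}\sum_{p\ge 0}\omega_0^{\,\alpha p}=\frac{C_1}{1-\omega_0^{\alpha}}\,\omega_0^{\,\alpha(n-m)},$$
which is the desired bound with $\chi_1=C_1/(1-\omega_0^{\alpha})$ and $\omega=\omega_0^{\alpha}\in(0,1)$.

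The main obstacle is the bookkeeping that justifies propagating the contraction of Lemma~\ref{lem-jaco-born-2} across several consecutive returns simultaneously: one must check that for each $\ell$ the point $z$ genuinely lies in the set $f^{-\tau^{\ell+1}(y)}(W^u(g^{\ell+1}(y),\CR(g^{\ell+1}(y))))$ to which the lemma applies, which follows from the Markov property of $\CR$ (Proposition~\ref{prop-rec-mark-3}) together with the definition of $n$-cylinder, and that the relevant points $y'$ witnessing the $r_0$-hyperbolic times exist at each stage so that $\tau^{\ell+1}(y)-\tau^{\ell}(y)$ really is a $\sqrt{r_0}$-hyperbolic time for the whole local unstable piece (Lemma~\ref{lem-hyper-hyperperto}). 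Once these inclusions are in place the estimate is the geometric summation above; the $\liminf$-versus-$\limsup$ change in the hypotheses plays no further role here, since at this point everything is governed by the fixed contraction rate $r_0=e^{-\zeta/3}$ coming from hyperbolic times.
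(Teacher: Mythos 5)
Your argument is correct and follows essentially the same route as the paper's: H\"older continuity of $\log J^u$ reduces everything to $\sum_j d^u(f^j(z),f^j(y))^\alpha$, and Lemma~\ref{lem-jaco-born-2}, iterated across the returns together with $\tau^n(y)-\tau^m(y)\ge n-m$, yields the geometric bound $\chi_1\omega^{n-m}$ with $\omega=\omega_0^\alpha$. The paper merely organizes the summation slightly differently (one geometric sum up to $\tau^m(y)$ contracted back from $g^m$, then $d^u(g^m(z),g^m(y))\le r_0^{(n-m)/2}\diam(\CR(g^n(y)))$), whereas you split into blocks between consecutive returns; the two bookkeepings are equivalent.
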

\begin{proof} The map $x\mapsto J^u(x)$ is H\"older-continuous because the map $E^u$ is H\"older-continuous; moreover it takes values in $[1,+\8[$.
The map $t\mapsto \log(t)$ is Lipschitz-continuous on  $]1,+\8[$. Thus, there exists some constants $\chi_2$ and $\alpha$, such that
\begin{equation}\label{equ2-patch0903}
\sum_{j=0}^{\tau^m(y)-1}\Big|\log (J^u(f^j(z)))-\log(J^u(f^j(y)))
\Big|\leq \chi_2\sum_{j=0}^{\tau^m(y)-1}(d^u(f^j(z),f^j(y)))^\alpha.
\end{equation}
 Hence, lemma \ref{lem-jaco-born-2} and (\ref{equ2-patch0903}) give
$$\sum_{j=0}^{\tau^m(y)-1}\Big|\log (J^u(f^j(z)))-\log(J^u(f^j(y)))
\Big|\leq \chi_2\left[\sum_{j=0}^{+\8}r_0^{j\alpha/2}\right](d^u(g^m(z),g^m(y)))^\alpha.$$Lemma \ref{lem-jaco-born-2} also yields $\disp d^u(g^m(z),g^m(y))\leq r_0^{(n-m)/2}\diam(\CR(g^{n}(y)))$.
\end{proof}

Lemma \ref{lem-jaco-born-3} gives that (\ref{jaco-borne}) holds for every $n$, for every $y\in g^n(W^u(x_{0},\CR(x_{0})))$ and for every $z\in W^u(y,\CR(y))$, with $\chi:=\chi_{1}+2\log\kappa$.

\begin{lemma}
\label{lem-mu-cs0}
Let $W^{s}_{loc}(\CS_{0})$ denote the set of local unstable leaf of points in $\CS_{0}$. 
The measure $\mu$ can be chosen so that $\mu(W^{s}_{loc}(\CS_{0}))=1$. 
\end{lemma}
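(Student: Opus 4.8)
The plan is to show that the accumulation point $\mu$, a priori a measure on $\ol{\CS_0}$, actually charges only the part of $\ol{\CS_0}$ that sits on local stable leaves of genuine points of $\CS_0$; equivalently, that no mass escapes onto the ``boundary'' rectangle pieces $\ol{\CS_0}\setminus \bigcap_n(\text{closure of }n\text{-cylinders})$. First I would recall the structure established in Lemma~\ref{lem-muweakstar-g}: the measures $\mu_n=\frac1n\sum_{i=0}^{n-1}g^i_*(\leb^u_0)$ were lifted to the symbolic space $\{1,\dots,N\}^{\Z}$, and any weak$^*$ accumulation point there is supported on the set of codes arising as limits of codes of points of $\CS_0$; projecting back, the support of $\mu$ lies in $\bigcap_n C_n$, where $C_n$ denotes the closure of the union of $n$-cylinders. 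So the content of the lemma is to identify $\bigcap_n C_n$ (or at least a full-$\mu$-measure subset of it) with a subset of $W^s_{loc}(\CS_0)$.

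The key geometric point is that $\ol{\CS_0}$ is a rectangle obtained as a limit of (un)stable leaves $W^u(x_n,R_k)$, and that under the contraction along hyperbolic returns (Lemma~\ref{lem-jaco-born-2}, with rate $\omega_0=\sqrt{r_0}<1$) the diameters of the $n$-cylinders in the stable direction shrink geometrically. More precisely, for a point $y$ lying in the closure of $n$-cylinders, the stable coordinate of $y$ is pinned, up to an error $O(r_0^{n/2})$, to the stable coordinate of an actual point $x\in\CS_0$ whose code agrees with that of $y$ for the first $n$ symbols; letting $n\to\infty$ one obtains a point $x_\infty\in\ol{\CS_0}$ whose forward $g$-orbit stays in $\bigcup R_k$ forever and whose code is a genuine limit of codes of $\CS_0$-points. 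One then argues that such $x_\infty$ in fact lies on $\CF^s$ of a point of $\Delta$ (using that $\Delta$, hence $\CS_0$, is saturated by the local product structure, Proposition~\ref{prop-omega3}), so $x_\infty\in W^s_{loc}(\CS_0)$. Thus $\bigcap_n C_n\subset W^s_{loc}(\CS_0)$, and since $\supp\mu\subset\bigcap_n C_n$ we get $\mu(W^s_{loc}(\CS_0))=1$. The phrase ``can be chosen'' in the statement covers the (harmless) ambiguity in the choice of accumulation point and of inverse branches in the symbolic lift; any such choice works.

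Alternatively — and this is probably the cleaner route to write — I would avoid a delicate description of $\bigcap_n C_n$ and instead estimate directly. Since $\mu_n$ is an average of pushforwards $g^i_*\leb^u_0$ and each $g^i_*\leb^u_0$ is supported on $f^{\tau^i}$(closure of an unstable leaf inside some $R_k$), its support lies within distance $O(r_0^{i/2}\diam R_k)$ of $W^s_{loc}(\CS_0)$ in the stable direction — because the leaf $W^u(g^i(\cdot),\CR(g^i(\cdot)))$ is, up to that contraction error, the image of a piece of unstable leaf through $\CS_0$-points. Hence for any $\eta>0$, all but finitely many terms $g^i_*\leb^u_0$ are supported in the $\eta$-neighborhood $V_\eta$ of $W^s_{loc}(\CS_0)$, so $\liminf_n \mu_n(V_\eta)=1$, and passing to the weak$^*$ limit along the subsequence defining $\mu$ (using that $V_\eta$ can be taken open, or applying the portmanteau inequality for closed sets to $\ol{V_\eta}$) gives $\mu(\ol{V_\eta})=1$ for every $\eta>0$; intersecting over a sequence $\eta\to 0$ yields $\mu\big(\bigcap_\eta \ol{V_\eta}\big)=1$, and $\bigcap_\eta\ol{V_\eta}=W^s_{loc}(\CS_0)$ since the latter is closed (it is a finite union of closures of stable-leaf families in the compact rectangles $\ol{R_k}$).

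The main obstacle I anticipate is the bookkeeping needed to make ``the support of $g^i_*\leb^u_0$ is $O(r_0^{i/2})$-close to $W^s_{loc}(\CS_0)$'' precise: one must track that the unstable leaf $W^u(g^i(y),\CR(g^i(y)))$ over which the measure spreads is the image under $f^{\tau^i}$ of a leaf through $\CS_0$, invoke the Markov property (Proposition~\ref{prop-rec-mark-3}) so that this image is again a full unstable leaf of a third-generation rectangle, and then use the hyperbolic-time contraction in the \emph{stable} direction together with the fact that the rectangles have bounded diameter $2\beta(\delta)$. One also has to be slightly careful that $W^s_{loc}(\CS_0)$, as defined, is genuinely closed — this follows because each $R_k$ is a rectangle with $\ol{R_k}$ compact and the stable-leaf family varies continuously (the footnote about local sequences of stable leaves converging to a graph). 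Once these points are in place, the weak$^*$ limit argument is routine.
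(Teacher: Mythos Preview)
There is a genuine gap, and it lies precisely at the step you flag as an ``obstacle'': the set $W^s_{loc}(\CS_0)$ is \emph{not} closed, and your argument collapses once this is acknowledged. Recall that each third-generation rectangle $R_k$ is a subset of $\Delta$, which was constructed as a set of $\leb^u$-density points (Proposition~\ref{prop-omega3}). In the local product coordinates, $R_k$ is (roughly) a product of a Cantor-like set in the unstable direction with a Cantor-like set in the stable direction; its stable saturation $W^s_{loc}(R_k)$ replaces the stable factor by a full disk but leaves the unstable factor untouched. That unstable factor is a set of density points and is typically not closed. The footnote you cite says only that a sequence of leaves $W^u(x_n,R_k)$ converges to a graph --- this gives $\ol{\CS_0}$ a rectangle structure, but the limit graph is the leaf through a point of $\ol{\CS_0}$, not necessarily of $\CS_0$. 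Consequently $\bigcap_\eta \ol{V_\eta}=\ol{W^s_{loc}(\CS_0)}\supsetneq W^s_{loc}(\CS_0)$, and your Portmanteau argument only yields $\mu(\ol{W^s_{loc}(\CS_0)})=1$, which is vacuous since $\ol{W^s_{loc}(\CS_0)}\supset \ol{\CS_0}\supset\supp\mu$ anyway. Your Approach~1 has the same defect: a point in $\bigcap_n C_n$ has a well-defined forward $g$-code, but there is no reason its unstable coordinate should be that of an actual $\CS_0$-point, so the claimed inclusion $\bigcap_n C_n\subset W^s_{loc}(\CS_0)$ is unjustified.

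The entire content of the lemma is precisely to rule out escape of mass to $\ol{\CS_0}\setminus W^s_{loc}(\CS_0)$, and this cannot be done with weak$^*$ convergence of measures alone. The paper's proof supplies the missing ingredient: it uses the bounded distortion estimate~\eqref{jaco-borne} in an essential way. One projects $\mu_n|_{\ol{R_k}}$ along the (absolutely continuous) stable holonomy onto a fixed unstable leaf $\CF_k\subset R_k$; the projection has a density $\varphi_{n,k}$ with respect to $\leb_{\ol{\CF_k}}$, and the bounded distortion gives a \emph{uniform} $L^\infty$ bound on the $\varphi_{n,k}$. One then passes to a subsequence converging weak$^*$ in $L^\infty$ (i.e.\ in duality with $L^1$, not merely with continuous functions), which allows testing against the indicator $\BBone_{\ol{\CF_k}\setminus\CF_k}$. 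Since $\mu_n(\ol{R_k}\setminus R_k)=0$ for every $n$ (the support of each $g^i_*\leb^u_0$ inside $\ol{R_k}$ consists of genuine unstable leaves $W^u(z,R_k)\subset R_k$), the limit integral vanishes as well, giving $\pi_{k*}\mu(\ol{\CF_k}\setminus\CF_k)=0$. This is the step where the phrase ``can be chosen'' earns its keep: one must pass to a further subsequence along which the $L^\infty$ densities converge. You should rewrite the proof around this $L^\infty$-compactness argument; the bounded-variation work in Lemma~\ref{lem-jaco-born-3} is there exactly to make it available.
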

\begin{proof}
Let us pick some rectangle of order 00, say $R_{k}$, having positive $\mu$ measure. Due to the Markov property, $g^{n}_{*}(\leb_{0})_{|R_{k}}$  is given by the unstable Lebesgue measure   supported on  a finite number of unstable leaves of the form $W^{u}(z, R_{k})$. 

Now recall  that stable and unstable leaves in $R_{k}$ are in $\CS_{0}$. 
Recall also that stable and unstable foliations are absolutely continuous. We can use the rectangle property of $R_{k}$ to project all the Lebesgue measures for $\supp g^{n}_{*}\leb_{0}\cap R_{k}$ on a fixed unstable leaf of $R_{k}$, say $\CF_{k}$. All these measure project themselves on a measure absolutely continuous with respect to Lebesgue, which yields that the projection on $\CF_{k}$ of $\mu_{n}$ can be written on the form 
$\varphi_{n,k} d\leb_{\ol{\CF_{k}}}.$
We have to consider the closure $\ol{\CF_{k}}$ to take account the fact that $\mu$ be ``escape'' from~$\CS_{0}$. 

It is a consequence of the bounded variations stated above (see \eqref{jaco-borne}) that all the $\varphi_{n,k}$ are uniformly (in $n$)  bounded $\leb_{\ol{\CF_{k}}}$ almost everywhere. 
In other words, all the $\varphi_{n,k}$ belong to a ball of fixed radius for $L^{\8}(\leb_{\ol{\CF_{k}}})$ and this ball is compact for the weak* topology. 

Therefore, we can consider a converging subsequence (for the weak* topology). As there are only finitely many $R_{k}$'s, we can also assume that the sequence converges for every rectangle $R_{k}$. For simplicity we write $\to_{\ninf}$ instead of along the final subsequence. 

Now, the convergence means that for every $\psi\in L^{1}(\leb(\ol{\CF_{k}}))$, 
$$\int\psi\varphi_{n,k}\,d\leb_{\ol{\CF_{k}}}\to_{\ninf}\int\psi\varphi_{\8,k}\,d\leb_{\ol{\CF_{k}}}.$$
If $\pi_{k}$ denote the projection on $\CF_{k}$
We can choose for $\psi$ the function $\BBone_{\ol{\CF_{k}}\setminus\CF_{k}}$. Then for every $n$, 
$$0\le \int\psi\varphi_{n,k}\,d\leb_{\ol{\CF_{k}}}=\int\psi\varphi_{n,k}\,d\leb_{\CF_{k}}\le\mu_{n}(\ol{R_{k}}\setminus R_{k})=0.$$ 
This shows that $\CF_{k}$ has full $\pi_{k*}\mu$ measure, and  as this holds for every $k$, this shows that $W^{s}_{loc}(\CS_{0})$ has full $\mu$ measure. 
\end{proof}

\begin{remark}
\label{rem-r0hyperreturn}
An important consequence  of Lemma \ref{lem-mu-cs0} is that for $\mu$ almost every point $x$, there exists a point $y\in \CS_{0}$ in its local stable leaf. Note that every $r_{0}$-hyperbolic time for $y$ is a return time for $y$, and then also for $x$. 
\end{remark}

%
%
%
%

\subsection{The SRB measure}
The map $g$ satisfies $g(z)=f^{\tau(z)}(z)$ for every $z$ in $\CS_{0}$.
Let $\CT(i)=\{z\in \CS_{0}\colon  \tau(z)=i\}$ be the set of points in $\CS_{0}$ such that
the first return time equals $i$. We set
$$\wh m=\sum_{i=1}^{+\8}\sum_{j=0}^{i-1}f^j_*(\mu|\CT(i)).$$
Then $\wh m$ is a $\sigma$-finite $f$-invariant measure and it is finite
if and only if $\tau$ is $\mu$-integrable; see~\cite{zweimuller}. In this case, we may
normalize the measure to obtain some probability measure. This will be an SRB measure. 

We define for $n\ge 1$ the set
$$H_n=\left\{x\in M\colon \text{$n$ is a $r_{0}$-hyperbolic time for $y$  in $W^{s}_{loc}(x)$}\right\}.$$
Observe that the following properties hold:
\begin{enumerate}
                \item[(a)] if $x \in H_j$ for $j\in\mathbb N$, then $f^i(x) \in H_m$ for any $1\le i<j$  and
$m=j-i;$
                \item[(b)] there is $\theta > 0$ such that for $\mu$ almost every $x$ in $\CS_{0}$
$$\displaystyle\limsup _{n\to\infty}\frac{1}{n}\#\{1\leq j\leq n:
x\in H_j\} \geq \theta ;$$
                \item[(c)]$H_n \subset  \{\tau \le n\}$ for each $n\ge 1$.
                                \end{enumerate}
                                
Condition (a) is a direct consequence of the definition for $H_{n}$. Condition (b) holds by construction of $\CS_{0}$, by  Pliss Lemma (see e.g. \cite{alves-bonatti-viana}) and the property emphasized in Remark \ref{rem-r0hyperreturn}.    Property (c) holds due to Lemma  \ref{lem-hyperretur-cs0}.


\begin{proposition}\label{pr.a-d}  The inducing time $\tau$ is $\mu$-integrable.
\end{proposition}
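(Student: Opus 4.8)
The plan is to deduce integrability of $\tau$ from the three properties (a), (b), (c) by a standard Kac-type/counting argument. The key point is that (c) lets us bound $\{\tau > n\}$ from above in terms of the sets $H_j$, while (b) forces the orbit to visit the sets $H_j$ with positive frequency, so that long return times become exponentially (or at least summably) rare.

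\begin{proof}
By property (c) we have, for every $n\ge1$,
$$\{\tau>n\}\subset M\setminus\bigcup_{j=1}^{n}H_{j}.$$
Fix $\theta>0$ as in property (b), and set
$$A=\Big\{x\in\CS_{0}\colon \limsup_{n\to\infty}\tfrac1n\#\{1\le j\le n\colon x\in H_{j}\}\ge\theta\Big\},$$
so that $\mu(A)=1$. For $x\in A$ put $N(x)=\min\{n\ge1\colon \#\{1\le j\le n\colon x\in H_{j}\}\ge \tfrac\theta2\, n\}$; this is finite for every $x\in A$, and measurable, hence there is $n_{0}$ with $\mu(A\cap\{N\le n_{0}\})>1-\theta/4$. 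Denote $A_{0}=A\cap\{N\le n_{0}\}$.

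Now use property (a): if $x\in H_{j}$ then $f^{i}(x)\in H_{j-i}$ for $1\le i<j$; in particular, whenever $n$ is a $r_{0}$-hyperbolic time for (a point in the local stable leaf of) $x$, then by Lemma~\ref{lem-hyperretur-cs0} all the intermediate iterates up to the first return are controlled, and $\tau(x)\le n$ by (c). Thus for $x\in A_{0}$ and $n\ge n_{0}$, among the indices $1\le j\le n$ there are at least $\tfrac\theta2 n$ values with $x\in H_{j}$, each of which gives (via (c)) a return of $x$ into $\CS_{0}$ before time $j\le n$; hence the number of returns of $x$ into $\CS_{0}$ before time $n$ is at least $\tfrac\theta2 n$. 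Equivalently, writing $\tau^{k}(x)$ for the $k$-th return time, we get $\tau^{\lceil \theta n/2\rceil}(x)\le n$ for all $n\ge n_{0}$, i.e. $\tau^{k}(x)\le \tfrac{2}{\theta}k$ for all $k$ large. By Birkhoff's ergodic theorem applied to the $g$-invariant measure $\mu$ (decomposing into ergodic components if necessary), $\tfrac1k\tau^{k}(x)\to\int\tau\,d\mu$ for $\mu$-a.e.\ $x$ on $\{\tau\in L^{1}(\mu)\}$ and $\to+\infty$ on its complement; since the left side is bounded by $2/\theta$ on the positive-measure set $A_{0}$, the complement of $\{\tau\in L^1(\mu)\}$ must be $\mu$-null, and moreover $\int\tau\,d\mu\le 2/\theta<\infty$. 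Therefore $\tau$ is $\mu$-integrable.
\end{proof}

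The main obstacle I expect is the passage from ``positive upper density of hyperbolic times'' (property (b), which is only a $\limsup$ statement, consistent with the $\liminf$ hypothesis that is the whole point of the paper) to an honest bound on the growth of the return times $\tau^{k}$. The clean way around it is exactly the stopping-time truncation above: one cannot use the $\limsup$ pointwise and uniformly, but one can choose a level $n_{0}$ capturing a set $A_{0}$ of $\mu$-measure close to $1$ on which the density $\theta/2$ is already achieved from time $n_{0}$ on, and then feed this into the ergodic theorem. One should double check that the ergodic-theorem step is legitimate here, namely that $\tau^{k}/k$ genuinely has the dichotomy limit (finite a.e.\ or $+\infty$ a.e.\ on ergodic components) — this is the standard fact underlying Kac's formula and is where reference~\cite{zweimuller} can be invoked — and that the bound $\tau^{k}(x)\le (2/\theta)k$ on a non-null set rules out the $+\infty$ alternative on every ergodic component meeting $A_0$; a short additional argument (or an appeal to the $F$-invariance and the fact that $\CS_0$ is reached from $\mu$-a.e.\ point, Remark~\ref{rem-r0hyperreturn}) handles components of measure zero intersection with $A_0$.
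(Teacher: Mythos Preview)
Your argument has a genuine gap in the stopping-time step. You define $N(x)$ as the \emph{first} $n$ at which $\#\{1\le j\le n:x\in H_j\}\ge\tfrac\theta2 n$, and then assert that for $x\in A_0=\{N\le n_0\}$ the same inequality holds \emph{for all} $n\ge n_0$. This does not follow: $N(x)\le n_0$ only tells you the density threshold $\theta/2$ is met at the single time $N(x)$; since property~(b) is merely a $\limsup$, the density can (and in general will) dip below $\theta/2$ for arbitrarily long stretches afterwards. Consequently the bound $\tau^{\lceil\theta n/2\rceil}(x)\le n$ for all $n\ge n_0$ is unjustified, and with it the conclusion $\tau^k(x)\le\tfrac2\theta k$ for all large~$k$. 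Your own closing paragraph identifies this $\limsup$ issue as the obstacle, but the stopping-time truncation you propose does not resolve it.

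There is also a secondary looseness: the sentence ``each of which gives a return of $x$ into $\CS_0$ before time $j$'' does not by itself yield \emph{distinct} returns for distinct hyperbolic times; a priori several $H_j$'s could all witness the same first return. What one actually needs (and what the paper proves) is that every $j$ with $x\in H_j$ is itself a $g$-return time: if $j_r<j\le j_{r+1}$ then by~(a) $g^r(x)=f^{j_r}(x)\in H_{j-j_r}$, so by~(c) $j_{r+1}-j_r=\tau(g^r(x))\le j-j_r$, forcing $j=j_{r+1}$. Hence $\#\{j\le n:x\in H_j\}\le r(n)$, where $r(n)$ counts the $g$-returns up to time~$n$.

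Once both points are fixed, the argument becomes the paper's own: from~(b), for $\mu$-a.e.\ $x$ there are infinitely many $n$ with $r(n)\ge\tfrac\theta2 n$, whence $\liminf_k \tau^k(x)/k\le 2/\theta$; Birkhoff gives $\tau^k/k\to\int\tau\,d\mu$ (with value $+\infty$ when $\tau\notin L^1$), so $\int\tau\,d\mu<\infty$. The paper simply phrases this as a contradiction.
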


\begin{proof}
Suppose by contradiction that $\int \tau d\mu=\infty$.
By Birkhoff's Ergodic Theorem we have
$$\frac{1}{i}\sum_{k=0}^{i-1}\tau(g^{k}(x))\rightarrow \int \tau d\mu =\infty, $$
for $\mu$-almost every point $x\in \mathcal S_0$.  
Let $x \in \mathcal S_0$ be a $\mu$-generic point. Define, for every $i\in
\mathbb{N}$,
$$j_{i}=j_{i}(x)= \sum_{k=0}^{i-1}\tau(g^{k}(x)).$$ This means that
$g^{i}(x)= f^{j_{i}}(x)$. We define
$$\mathcal{I}=\mathcal{I}(x)=\{j_{1},j_{2},j_{3},...\}.$$
Given $j\in \mathbb{N}$, there exists a unique integer $r= r(j)\geq
0$ such that $j_r < j \leq j_{r+1}$. Supposing that $x\in H_{j}$,
then $g^{r}(x)\in H_{m}$, where $m=j-j_{r}$, by (a) above. 
%
%
%
%
%
For each $n$ we have
\begin{equation*}\label{integralretorno}
\frac{1}{n}\#\{j\leq n:x \in H_{j}\}\leq \frac{r(n)}{n}.
\end{equation*}
By construction, if $r(n)=i$, that is, $j_{i}<n \leq j_{i+1}$, then
$$
\frac{j_{i}}{i}< \frac{n}{r(n)}\leq
\frac{j_{i+1}}{i+1}\left(1+\frac{1}{i}\right).
$$
Since $$ \frac{j_{i}}{i}= \frac{1}{i}\sum_{k=0}^{i-1}\tau(
g^{k}(x))\rightarrow\infty,\text{ as } i\rightarrow +\infty,
$$
 it follows that
$$
\lim_{n\rightarrow\infty}\frac{1}{n}\#\left\{1\leq j\leq n : x \in
H_{j}\right\}= \lim_{n\to\infty}\frac{r(n)}{n}= 0.
$$
This contradicts item (b), and so one must have that the inducing
time function $\tau$ is $\mu$-integrable.
\end{proof}

\section{Proof of Theorem~\ref{Theorem B}}\label{sec-proofthB}
Let $\lambda>0$ be fixed. We consider a large positive constant $K\gg 2$ whose magnitude shall be adjusted at the end of the proof of Theorem\ref{Theorem B}. Remember that we defined above $$B(S,\eps_1)= \{y\in M:\, d(S,y)<\eps_1\},
\quad\text{and }\quad
 \Omega_0= \Omega\setminus B(S,\eps_1).$$
We pick $\eps_1>0$ (precise conditions will be stated along the
way) small enough so that in particular
$B(S,\eps_1)\subset U$.
By continuity, we can also choose $\eps_1>0$ small enough such that for
every
$x\in B(S,\eps_1)$, 
\begin{equation}\label{eq.epsun}
\disp
1\leq\|df(x)_{|E^u(x)}\|<e^{\frac{\lambda}{{K}}}\quad \text{and} \quad\disp
1\leq\|df^{-1}(x)_{|E^s(x)}\|<e^{\frac{\lambda}{{K}}}.
\end{equation}
Moreover, there exist $\lu>0$  and $\ls>0$
 such that for
every $x\in \Omega\setminus B(S,\eps_1)$ it holds:
\begin{enumerate}
\item for all $v\in E^u(x)\setminus\{0\}$
$$\|df(x)v\|_{f(x)}>e^{\lu} \|v\|_x\qand
\|df^{-1}(x)v\|_{f^{-1}(x)}<e^{-\lu} \|v\|_x;
$$
\item for all $v\in E^s(x)\setminus\{0\}$
$$\|df^{-1}(x)v\|_{f^{-1}(x)}>e^{\ls} \|v\|_x\qand
\|df(x)v\|_{f(x)}<e^{-\ls} \|v\|_x.$$
\end{enumerate}

\subsection{Graph transform}

\subsubsection{Constants to control lack of hyperbolicity}
We denote
by $|\,.\,|$ the Euclidean norm on $\R^N$, where $N=\dim M$. 
By continuity, we can assume that the maps $x\mapsto \dim
E^*(x)$ are constant, for $*=u,s$.
From now on, we will
denote by $\R^u$  the
space $\R^{\dim E^u}\times\{0\}^{\dim E^s}$.
In the same way $\R^s$,
 $B^u(0,\rho)$ and $B^s(0,\rho)$ will denote respectively the spaces $\{0\}^{\dim
E^u}\times\R^{\dim E^s}$, $B^{\dim
E^u}(0,\rho)\times\{0\}^{\dim E^s}$ and 
$\{0\}^{\dim E^u}\times B^{\dim E^s}(0,\rho)$.

\begin{proposition}\label{prop-lyapu-unif}
Let $\eps>0$ be small compared to $\lambda,$ $\lu$ or $\ls$. There are constants $\rho_1>0$, $0<K_1<K_2$, a positive function $\bar\rho$, and a family of embeddings
$\phi_x:B_x(0,\rho_1)\subset \R^N\rightarrow M$  with $\phi_x(0)=x$ such that\footnote{The notation $B_{x}$ is to remind that the ball defined in $R^{N}$ has its image centered at $x$.}
\begin{enumerate}
\item $d\phi_x(0)$ maps  $\R^u$ and $\R^s$ onto $E^u(x)$ and $E^s(x)$ respectively;
\item if $\wh f_x=\phi_{f(x)}^{-1}\circ f\circ\phi_x$ and $\wh f_x^{-1}=\phi_{f^{-1}(x)}^{-1}\circ f^{-1}\circ\phi_x$, then
\begin{enumerate}
\item  if $x\in \Omega\setminus B(S,\eps_1)$, then
\begin{enumerate}
\item for all $ v\in \R^u\setminus\{0\}$
$$|d\wh f_x(0).v|>e^{\lu} |v|\qand 
|d\wh f_x^{-1}(0).v|<e^{-\lu} |v|;$$
\item for all $v\in \R^s\setminus\{0\}$
$$
|d\wh f_x^{-1}(0).v|>e^{\ls} |v|\qand
|d\wh f_x(0).v|<e^{-\ls} |v|;
$$
\end{enumerate}
\item if $x\in B(S,\eps_1)$, then
\begin{enumerate}
\item for all $v\in \R^u\setminus\{0\}, $
$$
\quad\quad |v|\leq |d\wh f_x(0).v|<e^{\frac{\lambda}{{K}}} |v|\qand
|v|\geq |d\wh f_x^{-1}(0).v|>e^{-\frac{\lambda}{{K}}} |v|;
$$
\item for all $ v\in \R^s\setminus\{0\}, $
$$
\quad\quad |v|\leq |d\wh f_x^{-1}(0).v|<e^{\frac{\lambda}{{K}}} |v|\qand
|v|\geq |d\wh f_x(0).v|>e^{-\frac{\lambda}{{K}}} |v|;
$$
\end{enumerate}
\end{enumerate}
\item $0<\bar\rho(x)\leq \rho_1$ for every $x\in B(S,\eps_1)$ and
$\bar\rho(x)=\rho_1$ for every $x\in\Omega\setminus B(S,\eps_1)$;
\item on the ball $B_x(0,\bar\rho(x))$ we have $\lip (\wh f_x-d\wh
f_x(0))<\eps$ and $\lip (\wh f^{-1}_x-d\wh
f^{-1}_x(0))<\eps$;
\item for every $x$ and for every
$z,z'\in B_x(0,\rho_1)$,
$$K_1|z-z'|\leq d(\phi_x(z),\phi_x(z'))\leq K_2|z-z'|.$$
\end{enumerate}
\end{proposition}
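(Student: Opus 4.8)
The plan is to build the charts $\phi_x$ by composing the exponential map of $M$ with a linear change of coordinates that straightens the hyperbolic splitting, and then to read off all the stated estimates from the uniform hyperbolicity outside $B(S,\eps_1)$ together with the $1\le\|df_{|E^u}\|<e^{\lambda/K}$ (resp. $1\le\|df^{-1}_{|E^s}\|<e^{\lambda/K}$) control inside $B(S,\eps_1)$. Concretely, for each $x\in\Omega$ fix an orthonormal basis of $\R^N$ adapted to the product $\R^u\oplus\R^s$, choose a linear isomorphism $L_x:\R^N\to T_xM$ sending $\R^u$ onto $E^u(x)$ and $\R^s$ onto $E^s(x)$ isometrically on each factor, and set $\phi_x=\exp_x\circ L_x$ on a ball $B_x(0,\rho_1)$ where $\rho_1$ is a uniform injectivity radius for $\exp$. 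Since $E^u,E^s$ are H\"older with uniformly bounded H\"older constant and have locally constant dimensions, $L_x$ can be chosen to depend measurably (indeed H\"older-continuously) on $x$ with uniformly bounded norm and co-norm; this immediately gives item (1) (because $d\phi_x(0)=L_x$ by $d\exp_x(0)=\mathrm{id}$) and item (5) with $K_1,K_2$ coming from the uniform bound on $\|L_x^{\pm1}\|$ and the bi-Lipschitz constant of $\exp_x$ on a uniform ball (shrinking $\rho_1$ if necessary, using compactness of $M$).

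For item (2), note $d\wh f_x(0)=L_{f(x)}^{-1}\circ df(x)\circ L_x$, and since $L_x,L_{f(x)}$ are isometries on the respective $E^u,E^s$ factors, $d\wh f_x(0)$ has exactly the same expansion/contraction estimates on $\R^u,\R^s$ as $df(x)$ has on $E^u(x),E^s(x)$ — the only subtlety is that the splitting is not orthogonal, but since we have defined $\R^u,\R^s$ to be the coordinate subspaces and $L_x$ respects them, $d\wh f_x(0)$ is block-diagonal and the norms $|d\wh f_x(0)v|$ for $v\in\R^u$ (resp. $\R^s$) equal $\|df(x)L_xv\|_{f(x)}$ up to the isometry. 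Hence case (a), $x\in\Omega\setminus B(S,\eps_1)$, follows from the hypotheses fixing $\lu,\ls$ stated just before the proposition, and case (b), $x\in B(S,\eps_1)$, follows from \eqref{eq.epsun} together with $\|df^{-1}_{|E^u}\|\le 1$ (which holds because $k^u\ge0$ everywhere in $U$); likewise for $\wh f_x^{-1}$ using $f^{-1}$-invariance of the splitting. For item (4), the map $\wh f_x - d\wh f_x(0)$ is $C^{1+}$ with $d(\wh f_x-d\wh f_x(0))(0)=0$, so its Lipschitz constant on $B_x(0,\bar\rho(x))$ is bounded by the modulus of continuity of $d\wh f_x$; by uniform continuity of $df$ and of $x\mapsto E^u(x),E^s(x)$ on the compact set $\overline U$, there is a single radius $\rho_1$ on which this Lipschitz constant is $<\eps$ for all $x\in\Omega\setminus B(S,\eps_1)$, giving $\bar\rho(x)=\rho_1$ there; inside $B(S,\eps_1)$ one may only be able to guarantee the bound on a smaller, $x$-dependent ball $B_x(0,\bar\rho(x))$ with $0<\bar\rho(x)\le\rho_1$, which is exactly what item (3) allows. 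The same argument applied to $\wh f_x^{-1}$ gives the second inequality in item (4) (take the minimum of the two radii).

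The main obstacle is the regularity and uniformity of the frame field $x\mapsto L_x$: one must check that an adapted basis for $E^u(x)\oplus E^s(x)$ can be chosen with uniformly bounded norm and co-norm, and with the bi-Lipschitz constants $K_1,K_2$ and the radius $\rho_1$ independent of $x$. This is where H\"older continuity of the splitting with uniformly bounded H\"older constant, compactness of $\overline U$, and the local constancy of $\dim E^u,\dim E^s$ are all used; once the frame field is in hand, every estimate in the proposition is a mechanical translation of the already-assumed hyperbolicity bounds through the charts, with the asymmetry between $\Omega\setminus B(S,\eps_1)$ and $B(S,\eps_1)$ handled exactly by the function $\bar\rho$.
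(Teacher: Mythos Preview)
Your proposal is correct and follows exactly the same route as the paper: build $\phi_x$ as $\exp_x$ composed with a linear frame $L_x$ adapted to the splitting, then read off the estimates. The paper's own proof is in fact only a two-sentence remark to this effect, stressing that $K_1,K_2$ depend only on the angle between $E^u$ and $E^s$ and on the injectivity radius (hence are independent of $\eps_1$)---a point you capture implicitly through the uniform bound on $\|L_x^{\pm1}\|$.
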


This is a simple consequence of the map $\exp$ defined for every Riemannian manifold.
However, it is important for the rest of the paper to understand
that
the two constants $K_1$ and $K_2$ do not depend on $\eps_1$. They
result from the distortion due to the angle between the two sub-spaces
$E^u$ and $E^s$, plus the injectivity radius. These quantities are
uniformly bounded.

Moreover, as $\Omega$ is a compact set in
$U$, we can choose $\rho_1>0$ such
that
$B(\Omega,\rho_1)\subset U$. As the maps $x\mapsto E^u(x)$ and
$x\mapsto E^s(x)$ are continuous, we  can also assume that
$\rho_1$ is small enough so that for every $x\in\Omega$ and $y\in\phi_x(B(0,\rho_1))$, the slope of $d\phi_x^{-1}(y)(E^u(y))$ in
$\R^N=\R^u\oplus\R^s$ is smaller than $1/2$.

\subsubsection{General Graph transform}
Here, we recall some well known basic statements for the graph transform. 
Let $E$ be some Banach space and $T:E\rightarrow E$ be a linear
map such that there exists a $T$-invariant splitting $E=E_1\oplus
E_2$. We set $T_i = T_{|E_i}$ and we assume that the norm on $E$
is adapted to the splitting, i.e.
$\|.\|_E=\max(\|.\|_{E_1},\|.\|_{E_2})$. We also assume that there
exist  $\lambda_2<0<\lambda_1$ such that:
\begin{enumerate}
\item $\|T_1^{-1}v\|_{E_1}\leq
e^{-\lambda_1}\|v\|_{E_1}$,\,  for every $v$ in $E_1$;
\item  $\|T_2v\|_{E_2}\leq
e^{\lambda_2}\|v\|_{E_2}$,\,  for every $v$ in $E_2$.
\end{enumerate}

\begin{proposition}\label{prop-transfo-graph}
Let $\rho$ be a positive number, $\eps>0$ be small
compared to $\lambda_1$ and $-\lambda_2$ and $F:E\to E$ be a $C^1$ map
 such that  $F(0)=0$,
 $\lip(F-T)<\eps$ and $\lip(F^{-1}-T^{-1})<\eps$ on the ball $B(0,\rho)$.
Then
\begin{enumerate}
\item the image by $F$ of the graph of any map
$g:B_1(0,\rho)\rightarrow B_2(0,\rho)$ satisfying $g(0)=0$ is a graph
of some map $\Gamma(g):B_1(0,\rho e^{\lambda_1-2\eps})\rightarrow
B_2(0,\rho e^{-\lambda_2+2\eps})$;
\item $F$ induces some operator $\Gamma$ on the set $\lip_1$ of
$1$-Lipschitz continuous maps $g:B_1(0,\rho)\rightarrow B_2(0,\rho)$
with $g(0)=0$;
\item  $\Gamma$ is a contraction on $\lip_1$ (with the standard norm on the space of
Lipschitz continuous maps), and thus it
admits some unique fixed point.
\end{enumerate}
\end{proposition}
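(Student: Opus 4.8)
The final statement to prove is Proposition~\ref{prop-transfo-graph}, the classical graph transform lemma. Let me sketch how I'd prove this.

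The plan is to work with the standard setup: a $1$-Lipschitz map $g:B_1(0,\rho)\to B_2(0,\rho)$ with $g(0)=0$, whose graph is $\{(v,g(v)):v\in B_1(0,\rho)\}$. I want to show $F$ of this graph is again a graph over a slightly smaller (in the $E_1$-direction) or larger (in the $E_2$-direction) ball. Write $F=T+R$ with $\lip(R)<\eps$, $R(0)=0$, and similarly for $F^{-1}$. Given a point $(v,g(v))$ on the graph, $F(v,g(v))=(T_1v+R_1(v,g(v)),\, T_2g(v)+R_2(v,g(v)))$. For the image to be a graph, I need the first coordinate map $v\mapsto T_1v+R_1(v,g(v))$ to be a homeomorphism onto its range. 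Since $g$ is $1$-Lipschitz, $v\mapsto R_1(v,g(v))$ is $\eps\sqrt2$-Lipschitz (or just $\eps$-Lipschitz with the max-norm convention, $\|(v,w)\|=\max$), while $T_1$ is invertible with $\|T_1^{-1}\|\le e^{-\lambda_1}$. So the first-coordinate map is a Lipschitz perturbation of the invertible $T_1$; by the Lipschitz inverse function theorem (Banach fixed point), for $\eps e^{-\lambda_1}<1$ it is a bi-Lipschitz homeomorphism, and its image contains $B_1(0,\rho e^{\lambda_1-2\eps})$ — this last inclusion follows by estimating $\|(T_1v+R_1(v,g(v)))\|\ge (e^{\lambda_1}-2\eps)\|v\|$ on $\|v\|=\rho$ together with a degree/connectedness argument (or directly solving the fixed-point equation). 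Composing the second coordinate with the inverse of the first defines $\Gamma(g)$, and the bound $\|T_2g(v)+R_2(v,g(v))\|\le(e^{\lambda_2}+2\eps)\rho\le\rho e^{-\lambda_2+2\eps}$ gives item (1); here I use $\lambda_2<0$ so the target ball stays inside $B_2(0,\rho)$.

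For item (2), I'd check $\Gamma(g)$ is again $1$-Lipschitz: if $(v_i,g(v_i))$, $i=1,2$, map to $(w_i,\Gamma(g)(w_i))$, then one estimates $\|w_1-w_2\|\ge(e^{\lambda_1}-\eps)\|v_1-v_2\|$ and $\|\Gamma(g)(w_1)-\Gamma(g)(w_2)\|\le(e^{\lambda_2}+\eps)\|v_1-v_2\|$ (using $g$ is $1$-Lipschitz in the cross terms), so the Lipschitz constant of $\Gamma(g)$ is at most $(e^{\lambda_2}+\eps)/(e^{\lambda_1}-\eps)<1$ when $\eps$ is small, in particular $\le 1$. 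Restricting $\Gamma$ to the complete metric space $\lip_1$ (complete under the sup-norm since it is a closed bounded subset of $C^0(B_1(0,\rho),E_2)$) makes sense. For item (3), I'd estimate, for $g,\tilde g\in\lip_1$, the sup-distance $\|\Gamma(g)-\Gamma(\tilde g)\|_\infty$. The standard computation: fix $w$ in the common domain, let $v,\tilde v$ be the $E_1$-preimages under the respective first-coordinate maps built from $g,\tilde g$; then $\|v-\tilde v\|\le e^{-\lambda_1}(1-\eps e^{-\lambda_1})^{-1}\cdot\eps\|g-\tilde g\|_\infty$ and $\|\Gamma(g)(w)-\Gamma(\tilde g)(w)\|\le(e^{\lambda_2}+\eps)\|v-\tilde v\|+\|g(v)-\tilde g(v)\|\le\kappa\|g-\tilde g\|_\infty$ for some $\kappa<1$ once $\eps$ is small compared to $\lambda_1,-\lambda_2$. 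Then the Banach fixed point theorem yields the unique fixed point. I'd remark that the use of $\lip(F^{-1}-T^{-1})<\eps$ is what guarantees $F$ is a diffeomorphism onto its image so that "image of a graph" is unambiguous and the inverse branches used above are well-defined.

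The main obstacle, such as it is, is bookkeeping with the constants: making sure the two factors $2\eps$ in the exponents of item (1) genuinely dominate the errors (the Lipschitz inverse function theorem produces a factor like $e^{-\lambda_1}/(1-\eps e^{-\lambda_1})$ rather than the cleaner $e^{-\lambda_1+2\eps}$, so one needs $\eps$ small enough that $1/(1-\eps e^{-\lambda_1})\le e^{2\eps}$, etc.), and tracking that the perturbation $R$ restricted to the graph is $\eps$-Lipschitz with the chosen adapted max-norm. None of this is deep; it is the standard Hadamard–Perron graph-transform argument, and I would present it at the level of these estimates rather than belaboring each inequality. Since the paper explicitly says "we recall some well known basic statements," a proof at this level of detail — or even a reference to \cite{bowen} or a standard text — is appropriate.
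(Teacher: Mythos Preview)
Your sketch is correct and follows the standard Hadamard--Perron argument; the paper itself does not supply a proof, treating the proposition as a well-known fact and only highlighting the two key mechanisms (the spectral gap of $dF$ giving the contraction on $\lip_1$, and the expansion in $E_1$ making the image graph overflow $B_1(0,\rho)$), both of which your outline captures.
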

For the proof of Theorem~\ref{Theorem B}, it is important to keep in mind the two key
points in the proof of Proposition~\ref{prop-transfo-graph}. On the one
hand, the fact
that $\Gamma$ is a contraction on $\lip_{1}$ is essentially due to the
spectral gap of $dF$. This is obtained by the properties of the
$T_{i}$'s and Lipschitz proximity of $F$ and $T$.
On the other hand, the fact that the image by $F$ of
any graph (from $B_{1}(0,\rho)$ to $B_{2}(0,\rho)$) extends beyond the boundary of the ball
$B(0,\rho)$ is essentially due to expansion on $E_{1}$.

\subsubsection{Adaptations to our case}
We want to use Proposition~\ref{prop-transfo-graph} in the
fibered case of $\wh f_x$.  If
$x$ is in $\Omega_0\cap f^{-1}(\Omega_0)$ the spectral gap of
$d\wh f_{x}(0)$ is uniformly bounded
from below in $B_{x}(0,\rho_{1})$, and so we can apply the result
with $d\wh f_{x}(0)$, $\wh f_{x}$ and $\rho=\rho_1$.  In $B(S,\eps_1)$, the value of
$\bar\rho(x)$ has to decrease to 0 when $x$ tends to $S$: the
spectral gap of $df$ tends to 0 as $x$ tends to $S$ because $k^u(x)+k^s(x)$ tends to 0. The idea
is to apply Proposition~\ref{prop-transfo-graph} for $d\wh f_{x}^n(0)$
 and $\wh f_{x}^n$ for some good $n$. First, we have to check that the
 assumptions of the proposition hold.

\begin{proposition}\label{prop-prA-kergo} 
Let $x\in \Omega_0$ and $n\geq 2$ be such that $f^n(x)\in \Omega_0$ and
 $f^k(x)\in B(S,\eps_1)$ for all $1\le k\le n-1$. 
There exists some constant $C>0$ such that for every $0<r\leq 1$,
Proposition \ref{prop-transfo-graph} holds for $F=\wh f^n_{x}$,
$T=dF(0)$
and $\rho= Cre^{-\frac{9n\lambda}{{2K}}}$, and also for  $F=\wh f^{-n}_{f^n(x)}$, $T=d \wh f^{-n}_{f^n(x)}(0)$ and $\rho=Cre^{-\frac{9n\lambda}{{2K}}}$.

\end{proposition}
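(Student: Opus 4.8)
The plan is to verify that the hypotheses of Proposition~\ref{prop-transfo-graph} hold for the iterated fibered map $\wh f^n_x$ when the scale $\rho$ is chosen proportional to $e^{-9n\lambda/(2K)}$, the point being that this shrinking factor is small enough to absorb both the loss of hyperbolicity along the excursion into $B(S,\eps_1)$ and the accumulated Lipschitz errors, while still leaving a genuine (if tiny) spectral gap. First I would write $\wh f^n_x = \wh f_{f^{n-1}(x)}\circ\cdots\circ \wh f_{f(x)}\circ \wh f_x$ and collect the linear data: by Proposition~\ref{prop-lyapu-unif}(2), the first factor $\wh f_x$ (with $x\in\Omega_0$) and the last factor $\wh f_{f^{n-1}(x)}$ contribute genuine expansion $e^{\lu}$ on $\R^u$ and contraction $e^{-\ls}$ on $\R^s$, whereas the $n-2$ intermediate factors, with base point in $B(S,\eps_1)$, contribute distortion at most $e^{\lambda/K}$ per step in the bad direction and are non-contracting/non-expanding in the good direction. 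Multiplying, $T = d\wh f^n_x(0)$ satisfies $\|T_1^{-1}\| \le e^{-\lu + (n-2)\lambda/K}$ on $\R^u$ and $\|T_2\|\le e^{-\ls+(n-2)\lambda/K}$ on $\R^s$; choosing $K$ large (as announced, $K\gg 2$, adjusted at the end) makes $(n-2)\lambda/K$ a small fraction of $\lu,\ls$ uniformly — here one uses that the number of consecutive bad iterates is governed by the $\lambda$-hyperbolicity, but for \emph{this} proposition $n$ is simply given and the estimate is in terms of $n$. So $T$ has the splitting $\R^N=\R^u\oplus\R^s$ with $\lambda_1 = \lu-(n-2)\lambda/K>0$ and $\lambda_2 = -\ls+(n-2)\lambda/K<0$; the adapted-norm requirement is arranged by Proposition~\ref{prop-lyapu-unif} plus the slope-$<1/2$ condition on the charts.

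Next I would control the nonlinear part, i.e. $\lip(\wh f^n_x - T)$ and $\lip((\wh f^n_x)^{-1}-T^{-1})$ on a ball $B_x(0,\rho)$. The composition rule for Lipschitz constants gives, schematically, that if each $\wh f_{f^k(x)}$ differs from its linear part by $\eps$ on $B_{f^k(x)}(0,\bar\rho(f^k(x)))$ (Proposition~\ref{prop-lyapu-unif}(4)) and if the derivative norms are bounded by $\Lambda_0 := e^{\lambda/K}$ at the bad steps and by $\sup_{\Omega}\|df^{\pm1}\| =: \kappa$ at the two end steps, then $\lip(\wh f^n_x - T) \lesssim n \kappa^2 \Lambda_0^{n}\,\eps$ on a ball small enough that the whole orbit of the ball stays inside the respective domains $B_{f^k(x)}(0,\bar\rho(f^k(x)))$. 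To guarantee the latter I need $\rho$ times the forward Lipschitz constant of $\wh f^k_x$ to be at most $\bar\rho(f^k(x))$ for each $k\le n$; since $\bar\rho$ can degenerate near $S$ but the forward derivative grows by at most $e^{\lambda/K}$ per bad step, requiring $\rho \le C e^{-9n\lambda/(2K)}$ with $C$ depending only on $\rho_1$, $K_1$, $K_2$, $\kappa$ and $\inf\{\bar\rho(y)/\operatorname{(scale)}\}$ suffices — the exponent $9/2$ gives comfortable slack to cover both the forward orbit containment ($\sim e^{n\lambda/K}$), the Lipschitz blow-up ($\sim e^{n\lambda/K}$), and the symmetric backward estimates for $\wh f^{-n}_{f^n(x)}$ simultaneously. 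With $\rho$ thus chosen, $\lip(\wh f^n_x - T) < \eps$ (after replacing $\eps$ by a fixed multiple, absorbed into the "small compared to $\lambda_1,-\lambda_2$" clause), and the same for the inverse by the symmetric argument using Proposition~\ref{prop-lyapu-unif}(2) read for $f^{-1}$. Then Proposition~\ref{prop-transfo-graph} applies verbatim with $F=\wh f^n_x$, $T=dF(0)$, $\rho = Cre^{-9n\lambda/(2K)}$, and scaling $\rho$ down by the free factor $0<r\le1$ only shrinks every domain and preserves all hypotheses, giving the stated conclusion; the backward case $F=\wh f^{-n}_{f^n(x)}$ is identical with the roles of $\R^u$ and $\R^s$ interchanged.

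The main obstacle I expect is the bookkeeping that makes the exponent $9/2$ (rather than some other constant) work \emph{uniformly in $n$}: one must simultaneously keep the forward $n$-orbit of the ball $B_x(0,\rho)$ inside $\bigcap_{k=0}^{n}\phi_{f^k(x)}(B(0,\bar\rho(f^k(x))))$, keep the backward $n$-orbit of $B_{f^n(x)}(0,\rho)$ inside the analogous intersection, and control two Lipschitz-proximity quantities — and all four constraints compete for the same budget in the exponent. The clean way to handle this is to note that along the excursion every relevant derivative bound, forward or backward, is between $e^{-\lambda/K}$ and $e^{\lambda/K}$ except at the two boundary steps where it is between $e^{-\kappa}$ and $e^{\kappa}$ for a fixed $\kappa$; since the two boundary contributions are $n$-independent they go into the constant $C$, and the $n$-dependent part is a single geometric factor $e^{n\lambda/K}$ appearing (at worst) squared in the Lipschitz estimate and once in each containment, so any exponent strictly larger than, say, $4$ would do and $9/2$ is a safe round choice. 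I would therefore organize the proof as: (i) linear estimates giving $\lambda_1,\lambda_2$; (ii) a domain-containment lemma fixing the admissible $\rho$; (iii) the Lipschitz-proximity estimate on that $\rho$; (iv) invoke Proposition~\ref{prop-transfo-graph}; (v) remark that the backward statement is symmetric and that $K$ will be finalized later so that $\lambda_1,-\lambda_2$ stay uniformly positive.
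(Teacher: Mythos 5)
The paper does not reprove this proposition: its ``proof'' is a citation to Proposition~2.3 of \cite{leplaideur-aaa}, together with the remark that the Lipschitz control of $df$ used there (via $d^2f$) must be replaced by a H\"older modulus since $f$ is only $C^{1+}$. Your attempt to reconstruct the argument is therefore welcome, but it has one bookkeeping error and one genuine gap. The bookkeeping error: since $f^k(x)\in B(S,\eps_1)$ for all $1\le k\le n-1$, only the first factor $\wh f_x$ has base point in $\Omega_0$; the last factor $\wh f_{f^{n-1}(x)}$ does not ``contribute genuine expansion $e^{\lu}$''. More importantly, by Proposition~\ref{prop-lyapu-unif}(2)(b) the factors with base point in $B(S,\eps_1)$ are \emph{non-contracting} on $\R^u$ and \emph{non-expanding} on $\R^s$, so the correct conclusion is $\|T_1^{-1}\|\le e^{-\lu}$ and $\|T_2\|\le e^{-\ls}$, a spectral gap uniform in $n$. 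Your weaker bound $\lambda_1=\lu-(n-2)\lambda/K$ is not only unnecessary, it degenerates as $n\to\infty$ for fixed $K$ (and $K$ is fixed once and for all before $n$ is given, so it cannot be ``chosen large'' to compensate); taken at face value your linear estimate would fail to give a positive gap for long excursions, precisely the case of interest.

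The genuine gap is in the Lipschitz-proximity step, which is the whole content of the proposition and the reason for the factor $e^{-9n\lambda/(2K)}$ in $\rho$. You bound $\lip(\wh f^n_x-T)$ by (schematically) $n\kappa^2\Lambda_0^n\eps$, where $\eps$ is the \emph{fixed} constant of Proposition~\ref{prop-lyapu-unif}(4); this quantity grows with $n$ and is never $<\eps$, so the hypotheses of Proposition~\ref{prop-transfo-graph} are not verified as written. What makes the estimate close is that on a ball of radius $r$ the H\"older continuity of $df$ gives $\lip(\wh f_y-d\wh f_y(0))\le H r^\alpha$, so that shrinking $\rho$ geometrically in $n$ makes each per-step deviation of order $e^{-c\,n\alpha\lambda/K}$, small enough to absorb the accumulation factor $e^{n\lambda/K}$ coming from composing $n$ derivative bounds (and the analogous factor in the domain-containment requirement, where one also needs a quantitative lower bound on $\bar\rho(f^k(x))$ along the excursion, not just positivity). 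Your proposal never invokes the H\"older modulus of $df$, so this step cannot be completed from the ingredients you list; moreover, once it is invoked, checking that the exponent $9/2$ dominates the accumulated growth uniformly in $n$ (and for the given H\"older exponent) is exactly the non-routine verification alluded to in Remark~\ref{rem-9K}, not a ``safe round choice''.
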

\begin{proof}
We refer the reader to Proposition 2.3 in \cite{leplaideur-aaa}. We emphasize that that proof used $d^{2}f$ to actually get Lipschitz continuity for $df$. Now, this is a standard computation to exchange Lipschitz estimates with H\"older estimates. 
\end{proof}

\begin{remark}
\label{rem-9K}
We checked that the constant 9 that appeared in Proposition 2.3 in \cite{leplaideur-aaa} also works if we only use a H\"older continuity condition on $df$. We point out that these constants are very common in the Pesin theory and have to be understood ``in spirit'': we take very small fraction of the global minimal expansion ratio. 
\end{remark}

Let $x\in\Omega_0$ and $C>0$ be given by Proposition~\ref{prop-prA-kergo}. We remind that $n^{\pm}(x)$ were defined in Definition \ref{def-n+}. 
 If $f(x)\in B(S,\eps_1)$ and $n^+(x)<+\8$, then we
set
\begin{equation}\label{eq.elf}
l^f(x)=Ce^{-\frac{9 \lambda}{{2K}}n^+(x) }.
\end{equation}
Analogously, if $f^{-1}(x)\in B(S,\eps_1)$ and $n^-(x)<+\8$, then
we set 
\begin{equation}\label{eq.elb}
l^b(x)= Ce^{-\frac{9\lambda}{{2K}}n^-(x)}.
\end{equation}
If $x\in \Omega_{0}\cap
f^{-1}(\Omega_{0})$,
the graph transform due to Proposition~\ref{prop-transfo-graph} with
$F=\wh f_{x}$, $T=d\wh f_{x}(0)$ and $\rho=\rho_{1}$ will be called the \emph{one-step graph transform}. If $x\in\Omega_{0}\cap f^{-1}(B(S,\eps_{1}))$, the graph transform
due to Proposition~\ref{prop-prA-kergo} with  $F=\wh f^{n^+(x)}_{x}$,
$T=dF(0)$
and $\rho= Cre^{-\frac{9\lambda}{{2K}}n^+(x)}$will be called
the \textit{$n^{+}(x)$-steps graph transform}. Both will be denoted
by~$\Gamma_{x}$.

\subsection{Truncations}

We are
going to show that if $x\in\Omega_0$ is a $\lambda$-hyperbolic point of bounded type, then we are able to construct a piece of unstable leaf as some set
$\phi_x(\graph(g_x))$, where $g_x$ will be some special map from
$B^u_{x}(0,l(x))$ to $B^s_{x}(0,l(x))$ satisfying $g_x(0)=0$ for
some $l(x)\leq \rho_{1}$. We will use the graph transform along the
backward orbit of $x$.

The next result shows that every
$\lambda$-hyperbolic point must return infinitely often to~$\Omega_{0}$, both in the future and in the past.

\begin{lemma}\label{lem-prA-n+-fini}
Let $\xi\in \Omega_0$ be some $\lambda$-hyperbolic point such that
$f(\xi)\notin\Omega_0$ (resp. $f^{-1}(\xi)\notin\Omega_0$). Then
$n^+(\xi)<+\8$ (resp. $n^-(\xi)<+\8$).
\end{lemma}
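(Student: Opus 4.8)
The plan is to argue by contradiction. Suppose $\xi\in\Omega_0$ is $\lambda$-hyperbolic, $f(\xi)\notin\Omega_0$, and $n^+(\xi)=+\infty$; this means $f^k(\xi)\in B(S,\eps_1)$ for all $k\ge 1$. The key point is that $\xi$ being $\lambda$-hyperbolic forces, via Definition~\ref{def.reg}, that $\liminf_{n\to+\infty}\frac1n\log\|df^n(\xi)_{|E^s(\xi)}\|\le-\lambda$, so along a subsequence $n_j\to+\infty$ we have strong contraction $\|df^{n_j}(\xi)_{|E^s(\xi)}\|\le e^{-\frac\lambda2 n_j}$. On the other hand, once the whole forward orbit (past time $0$) lies in $B(S,\eps_1)$, the hypothesis \eqref{eq.epsun} gives $\|df^{-1}(f^k(\xi))_{|E^s}\|<e^{\lambda/K}$ for all $k\ge 1$, equivalently $\|df(f^{k}(\xi))_{|E^s}\|>e^{-\lambda/K}$; chaining these from $k=1$ to $n-1$ and accounting for the single possibly-non-small factor $\|df(\xi)_{|E^s}\|$ at the base point (which is bounded by $\sup_{x\in U}\|df(x)\|=:\kappa$, a fixed constant), we obtain a lower bound
$$\|df^{n}(\xi)_{|E^s(\xi)}\| \ge \kappa^{-1}\, e^{-\frac{\lambda}{K}(n-1)}.$$
Taking logarithms and dividing by $n$, $\frac1n\log\|df^n(\xi)_{|E^s(\xi)}\|\ge -\frac{\lambda}{K}-\frac{\log\kappa+\lambda/K}{n}$, and since $K\gg 2$ the right-hand side tends to a value $\ge-\lambda/K>-\lambda/2$, contradicting the existence of the contracting subsequence $n_j$.

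First I would write out the definition of $n^+(\xi)=+\infty$ explicitly, noting it exactly says $f^k(\xi)\in B(S,\eps_1)$ for every $k\ge 1$ (the condition $f(\xi)\notin\Omega_0$ is what makes $n^+(\xi)$ well-defined and $\ge 2$ in the first place). Then I would invoke \eqref{eq.epsun} to control $\|df(f^k(\xi))_{|E^s(f^k(\xi))}\|$ from below for each $k\ge 1$, multiply these estimates together along the orbit segment from $1$ to $n-1$, and insert the single crude bound at the base point to get the displayed lower bound on $\|df^n(\xi)_{|E^s(\xi)}\|$. Finally I would extract the contradiction with $\lambda$-hyperbolicity as above, using that $K$ was chosen large (so $\lambda/K<\lambda/2$). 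The argument for $n^-(\xi)<+\infty$ is the mirror image: replace $f$ by $f^{-1}$, $E^s$ by $E^u$, and use the first inequality in \eqref{eq.epsun} (bounding $\|df(x)_{|E^u(x)}\|$ from above by $e^{\lambda/K}$, hence $\|df^{-1}\|_{|E^u}$ from below) together with the $E^u$-part of $\lambda$-hyperbolicity.

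The step I expect to require the most care is the bookkeeping of the single ``bad'' derivative factor at the base point $\xi$: at $\xi$ itself we are in $\Omega_0$, not in $B(S,\eps_1)$, so the uniform near-isometry estimate does not apply there, and one must isolate that factor and bound it by a fixed constant $\kappa=\sup_{x\in U}\max(\|df(x)\|,\|df^{-1}(x)\|)$ independent of $n$. This is harmless because it contributes only an $O(1/n)$ term after normalization, but it needs to be stated cleanly; everything else is a direct comparison between a $\liminf$ along a subsequence and a uniform lower bound valid for all large $n$.
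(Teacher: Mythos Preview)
Your argument is correct and is essentially the paper's own proof: assume $n^+(\xi)=+\infty$, use \eqref{eq.epsun} to bound the stable contraction along the forward orbit from below by $e^{-\lambda/K}$ per step, conclude $\liminf_{n\to\infty}\tfrac1n\log\|df^n_{|E^s}(\xi)\|\ge -\lambda/K>-\lambda$, contradicting $\lambda$-hyperbolicity. Your isolation of the base-point factor via $\kappa$ is in fact unnecessary (since \eqref{eq.epsun} bounds $\|df^{-1}(x)_{|E^s}\|$ at $x=f(\xi)\in B(S,\eps_1)$, which already yields the lower bound on $\|df(\xi)_{|E^s}\|$), but it is harmless.
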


\begin{proof}
Assume that $f(\xi)\notin\Omega_0$ and $n^+(\xi)=+\8$. This means that
$ f^n(\xi)\in B(S,\eps_1)$ for all $n\ge1$.
This implies that for all $n\geq 1$ and all $v\in E^s(f^n(\xi))$
$$
\|df(f^n(\xi))v\|\geq e^{-\frac{n\lambda}{{K}}}\|v\|.
$$
Hence we get
$$\liminf_\ninf\frac1n\log\|df^n_{|E^s}(\xi)\|\geq
-\frac{\lambda}{{K}},$$
which contradicts the fact that $\xi$ is $\lambda$-hyperbolic.
The other case is proved similarly.
\end{proof}

Let $x\in\Omega_0$ be a $\lambda$-hyperbolic point. Given $n\ge0$, we define $x_{n}= f^{-n}(x)$.
By Lemma~\ref{lem-prA-n+-fini} there exist integers $0\leq q_{0}<p_{0}\leq
q_{1}<p_{1}\leq \ldots$ such that for all $i\ge 1$
\begin{itemize}
	\item  $x_{k}\in \Omega_{0}$ for all $0\leq k\leq q_{0}$;
\item   $x_{k}\in
B(S,\eps_{1})$ for all  $q_{i}<k<p_{i}$;

\item  $x_{k}\in
	\Omega_{0}$ for all $p_{i-1}\leq k\leq q_{i}$.
\end{itemize}
For $i\ge0$ we set
\begin{equation}\label{eq-mi}
y_{i}= x_{q_{i}}=f^{-q_{i}}(x),\quad z_{i}= x_{p_{i}}=f^{-p_{i}}(x)\quad\text{and}\quad m_{i}=p_{i}-q_{i}.
\end{equation}
  We define $\Gamma^n_{x}$ as the composition of the
graph transforms along the piece of orbit
$x_{n},\ldots,x_{1}$, where we take the one-step graph
transform $\Gamma_{x_{k}}$ if $x_{k}$ and $x_{k-1}$ lie in $\Omega_{0}$,
and the $m_{i}$-steps graph transform if $x_{k}$ is one of the
$z_{i}$'s.

Our goal  is to prove that the sequence of maps $\Gamma^n_{x}(\wh
0_{x_{n}})$
converges to some map, where~$\wh 0_{x}$ denotes the null-map from
$B^u_{x}(0,\rho_{1})$ to $B^s_{x}(0,\rho_{1})$.  This is well known for uniformly
hyperbolic dynamical systems, but here the critical set $S$
influences the graph transform:
by construction, $\Gamma^n_{x}$ is a contraction of ratio smaller
than $$\exp\left(-\left(n-\sum_{i, p_{i}\leq n}(m_{i}-1)\right)\left(\lu+\ls-4\eps\right)\right).$$
But it is not clear that the length of the graph associated to
$\disp \Gamma^n_{x}(\wh 0_{x_{n}})$ is uniformly (in $n$) bounded away from 0.  
 
 Let us now explain how $S$ influences the graph transform and specially the length of the graphs associated to the
$\Gamma^n_{x}(\wh 0_{x_{n}})$'s.
Given  $x_{n}\in \Omega_{0}$, there exists an integer $k$ such that $p_{k}\leq n\leq q_{k+1}$.
For $p_{k}+1\leq i\leq n$ we just apply the one-step graph transform,
and so we get in $B_{z_{k}}(0,\rho_{1})$ some graph. At this moment we
apply the $m_{k}$-steps graph transform to the graph restricted to the
ball $B_{z_{k}}(0,l^f(z_{k}))$, and we get some graph in $B_{y_{k}}(0,l^b(y_{k}))$.  We call this phenomenon a
\emph{truncation}.

\begin{figure}
\includegraphics[scale=0.6]{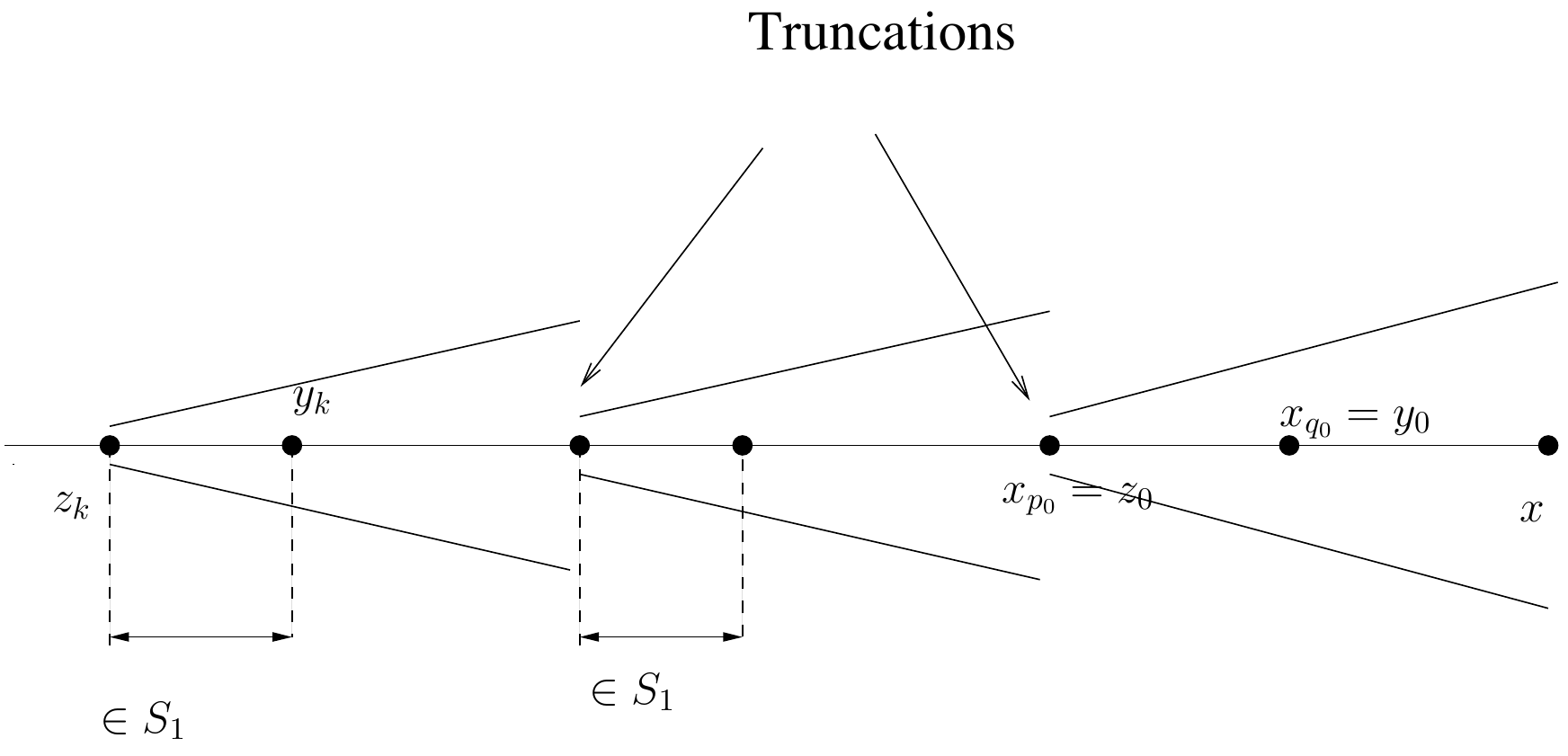}
\caption{Truncations due to excursions close to $S_{1}$. The vertical represents the size of the graphs.}
\label{fig-truncation}
\end{figure}

For $p_{k-1}\leq i\leq q_{k}$, we can again apply the one-step graph
transform, but to the small piece of graph with length $l^b(y_{k})$. However, the length increases along this piece of orbit
(as long as it is smaller than $2\rho_{1}$) because, at each (one-)step,
we
can take the whole part of the image-graph. For $i=p_{k-1}$ three cases may occur:
\begin{itemize}
	\item[(i)]  the length of the graph is $2\rho_{1}$ and so, there is
	a new truncation;

	\item [(ii)]  the length of the graph is strictly smaller than
	$2\rho_{1}$, but is bigger than $2l^f(z_{k-1})$: again, there is a new truncation;

	\item [(iii)]  the length of the graph, $2l$, is strictly smaller than $2l^f(z_{k-1})$: 
	we apply the $m_{k-1}$-steps graph transform with $\rho = l$, and we obtain some graph in $B_{y_{k-1}}(0,l)$.
	From~$x$, we can see the truncation due to $\Gamma_{z_{k}}$.
\end{itemize}
We proceed this way along the piece of orbit $y_{k-1},\ldots,x_{-1}$.
Hence, the length of the graph $\Gamma^n_{x}(\wh
0_{x_{n}})$ is (at least) equal to $2l^f(z_{j})$, where $j$ is such that $p_{j}$ is
the smallest integer where a truncation occurs. We set $i(n):=j$. 

The main problem we have to deal with, is to know if  the sequence $i(n)$ may go to $+\8$ or if it is bounded. In the first case, this means that the sequence of graphs $\Gamma^n_{x}(\wh
0_{x_{n}})$ converges to a graph with zero length  in any ball $B(x,\rho)$. On the contrary, the later case produces a limit graph with positive length. In Proposition~\ref{prop-key-paststab} below we show that, for $\lambda$-hyperbolic points of bounded type,  it is possible to have the sequence 
$(i(n))_n$ bounded. For its proof we need two preliminary lemmas.

Let $A=\max_{x\in M}{\|df(x)\|}$ and define
$$
\gamma=\left({\log A-\frac{\lambda}2}\right)\left({\frac{\lambda}2-\frac\lambda{K}}\right)^{-1}.
$$
We emphasize that $\gamma$ converges to some finite positive number when $K$  grows to infinity.

\begin{lemma}
\label{lem-nique-nuh-ze1}
Given a $\lambda$-hyperbolic $x$, let  $m$ be an integer such that $f^{j}(x)\in B(S,\eps_{1})$ for all $0\le j\le m-1$. 

\begin{enumerate}
\item If  $m>\gamma q$ for some integer $q\ge 1$, then
 $$\|df^{m+q}_{|E^u(x)}\|\le e^{(m+q){\lambda}/2}.$$
\item If $\|df^{q}_{|E^u(f^{m}(x))}\|\le e^{q\lambda/2}$, then for every $0\le j\le m-1$
$$||df^{q+j}(x)||\le e^{(q+j)\lambda/2}.$$
\end{enumerate}

\end{lemma}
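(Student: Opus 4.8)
\textbf{Proof plan for Lemma~\ref{lem-nique-nuh-ze1}.}

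The plan is to exploit the fact that the only place where hyperbolicity degenerates is inside $B(S,\eps_1)$, where by \eqref{eq.epsun} the unstable derivative satisfies $1\le\|df(y)_{|E^u(y)}\|<e^{\lambda/K}$, while outside $B(S,\eps_1)$ the norm is controlled crudely by $A=\max_{x\in M}\|df(x)\|$ (so in particular by $e^{\log A}$). Thus the strategy for part~(1) is to split the $(m+q)$-fold product of derivative norms into the $m$ iterates spent inside $B(S,\eps_1)$ and the remaining $q$ iterates, bound the first block by $e^{m\lambda/K}$ and the second by $e^{q\log A}$, and then verify that $m\lambda/K+q\log A\le(m+q)\lambda/2$ under the hypothesis $m>\gamma q$. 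This last inequality is exactly what the definition of $\gamma=(\log A-\tfrac{\lambda}{2})(\tfrac{\lambda}{2}-\tfrac{\lambda}{K})^{-1}$ is engineered to give: rearranging $m>\gamma q$ yields $m(\tfrac{\lambda}{2}-\tfrac{\lambda}{K})>q(\log A-\tfrac{\lambda}{2})$, i.e. $\tfrac{m\lambda}{2}+\tfrac{q\lambda}{2}>\tfrac{m\lambda}{K}+q\log A$, which is precisely the desired bound on the exponents. Note $\lambda$-hyperbolicity of $x$ is not really needed for part~(1) beyond ensuring $E^u(x)$ is defined; what matters is that $x,f(x),\dots,f^{m-1}(x)\in B(S,\eps_1)$.

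For part~(2), the point is that $\|df^{q}_{|E^u(f^m(x))}\|\le e^{q\lambda/2}$ is an estimate ``after'' the excursion, and we want to prepend any number $j\le m-1$ of the excursion iterates. Since those iterates $f^0(x),\dots,f^{j-1}(x)$ all lie in $B(S,\eps_1)$, each contributes a factor $<e^{\lambda/K}\le e^{\lambda/2}$ to the norm of $df^{q+j}(x)$ restricted to $E^u$, using submultiplicativity $\|df^{q+j}_{|E^u(x)}\|\le\|df^{q}_{|E^u(f^j(x))}\|\cdot\|df^{j}_{|E^u(x)}\|$. But wait — we are given control of $df^q$ at $f^m(x)$, not at $f^j(x)$; so I would instead write $df^{q+j}(x)$ restricted to $E^u(x)$ as the composition $df^{q}(f^{m}(x))\circ df^{m-j}(f^j(x))^{-1}\cdots$ — more cleanly: along the orbit segment from $x$ to $f^{q+j}(x)$, the iterates that are \emph{not} among the final $q$ are iterates $f^i(x)$ with $i<j\le m-1$, hence inside $B(S,\eps_1)$; actually the cleanest bookkeeping is $\|df^{q+j}_{|E^u(x)}\| \le \big(\prod_{i=0}^{j-1}\|df_{|E^u(f^i(x))}\|\big)\cdot\|df^{q}_{|E^u(f^j(x))}\|$ is still not quite the hypothesis. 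The correct reading: since $j\le m-1$, we have $f^j(x)\in B(S,\eps_1)$ and more importantly all of $f^j(x),\dots,f^{m-1}(x)$ are in $B(S,\eps_1)$, so $\|df^{m-j}_{|E^u(f^j(x))}\|<e^{(m-j)\lambda/K}$; combining, $\|df^{q+j}_{|E^u(x)}\|\le\|df^{q}_{|E^u(f^m(x))}\|\cdot\|df^{m-j}_{|E^u(f^j(x))}\|\cdot(\text{stuff before }f^j)$ — no. Let me restate the intended argument: we bound $\|df^{q+j}_{|E^u(x)}\|$ by inserting the excursion product for the first $j$ steps and then the given bound for a $q$-step block, but the $q$-step block must start at $f^j(x)$, and $\|df^{q}_{|E^u(f^j(x))}\|\le\|df^{q}_{|E^u(f^m(x))}\|\cdot\|df^{m-j}_{|E^u(f^j(x))}\|^{\pm1}$; since $f^j(x),\dots,f^{m-1}(x)\in B(S,\eps_1)$ the factor $\|df^{m-j}_{|E^u(f^j(x))}\|\in[1,e^{(m-j)\lambda/K})$, hence $\|df^{q}_{|E^u(f^j(x))}\|\le e^{q\lambda/2}\cdot e^{(m-j)\lambda/K}$, and then $\|df^{q+j}_{|E^u(x)}\|\le e^{j\lambda/K}\cdot e^{q\lambda/2}\cdot e^{(m-j)\lambda/K}=e^{q\lambda/2}e^{m\lambda/K}$, which is $\le e^{(q+j)\lambda/2}$ as soon as $m\lambda/K\le j\lambda/2$; this may fail for small $j$. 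So the honest route is: forget trying to isolate exactly the stated $q$-block at $f^m$; instead observe directly that along $x,\dots,f^{q+j-1}(x)$, the iterates in $B(S,\eps_1)$ are exactly $f^0(x),\dots,f^{m-1}(x)$ (that is $m$ of them, assuming $q+j\ge m$, which holds when $q\ge1$ is not automatic... ) —

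\textbf{The main obstacle} is precisely this index bookkeeping in part~(2): making sure the hypothesis ``$\|df^{q}_{|E^u(f^{m}(x))}\|\le e^{q\lambda/2}$'' is deployed on the right orbit segment and that the extra factors coming from the $B(S,\eps_1)$-iterates are absorbed into the $e^{(q+j)\lambda/2}$ budget. I expect the clean resolution is to apply part~(1)'s bound or a direct submultiplicative split: write $q+j$ as (the $j$ excursion steps at the front) $+$ (the remaining $q$ steps, whose derivative norm along $f^j(x),\dots,f^{q+j-1}(x)$ is controlled because the hypothesis on the $q$-block at $f^m(x)$ combined with the $B(S,\eps_1)$-bound on the intermediate $m-j$ excursion steps gives the needed estimate), and then check $j\tfrac{\lambda}{K}+q\tfrac{\lambda}{2}+(m-j)\tfrac\lambda K\le(q+j)\tfrac\lambda2$ which, since $j\ge1$ and using $m-j+j=m\le\gamma q$ is \emph{not} assumed in (2) — so one must instead use that $j\le m-1$ together with whatever relation between $m$ and $q$ is implicitly available. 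Rereading the statement of (2), no relation between $m$ and $q$ is assumed, so the extra $e^{m\lambda/K}$ cannot in general be bounded by $e^{j\lambda/2}$. Hence I suspect the intended proof of (2) uses the estimate the \emph{other way around}: the hypothesis controls the block \emph{ending} at $f^{m+q}(x)$, i.e. we go from $x$ forward, spend $m$ steps in $B(S,\eps_1)$ contributing $<e^{m\lambda/K}$, but then actually the stated bound $\|df^q_{|E^u(f^m(x))}\|\le e^{q\lambda/2}$ should be combined so that $\|df^{m+q}_{|E^u(x)}\|\le e^{m\lambda/K+q\lambda/2}$, and for $j\le m-1$ we have $\|df^{q+j}_{|E^u(x)}\|\le e^{j\lambda/K}\|df^{q}_{|E^u(f^j(x))}\|$ where $\|df^{q}_{|E^u(f^j(x))}\|\le e^{(m-j)\lambda/K}\|df^q_{|E^u(f^m(x))}\|$ wait no—$f^j(x)$ is earlier than $f^m(x)$, so $df^q$ at $f^j$ and $df^q$ at $f^m$ are genuinely different and related only through $m-j$ extra excursion steps, giving $\|df^q_{|E^u(f^j(x))}\|\le \|df^{m-j}_{|E^u(f^j(x))}\|\cdot\|df^q_{|E^u(f^m(x))}\|\le e^{(m-j)\lambda/K}e^{q\lambda/2}$, whence $\|df^{q+j}_{|E^u(x)}\| \le e^{j\lambda/K}e^{(m-j)\lambda/K}e^{q\lambda/2}=e^{m\lambda/K}e^{q\lambda/2}$. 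This is $\le e^{(q+j)\lambda/2}$ iff $m\lambda/K \le j\lambda/2$, i.e. $j\ge 2m/K$; since $K\gg2$ and $j\ge1$... still not for all $j\ge1$ unless $K\ge 2m$. I therefore expect the actual proof requires $q$ large enough relative to $m$ (or silently uses $K$ large and $m$ bounded by context); I would flag this and, following Remark~\ref{rem-9K}, treat the constants ``in spirit'', choosing $K$ large enough at the end of the proof of Theorem~\ref{Theorem B} so that all such inequalities hold. The safest writeup: prove $\|df^{q+j}_{|E^u(x)}\|\le e^{m\lambda/K}e^{q\lambda/2}$ exactly as above, and conclude $\le e^{(q+j)\lambda/2}$ by noting that in the application $m/q$ stays bounded (by $\gamma$, via part (1) being applied with the same $m,q$) so $m\lambda/K$ is a small fraction of $q\lambda/2\le(q+j)\lambda/2$ once $K$ is taken large.
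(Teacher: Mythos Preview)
Your treatment of part~(1) is correct and matches the paper's proof exactly: split $df^{m+q}(x)$ into the $m$ excursion steps (bounded by $e^{m\lambda/K}$ via \eqref{eq.epsun}) and the remaining $q$ steps (bounded crudely by $A^q=e^{q\log A}$), then use the definition of $\gamma$ to turn $m>\gamma q$ into the exponent inequality $m\lambda/K+q\log A<(m+q)\lambda/2$.

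For part~(2), your difficulty is real but its source is a misreading of the intended statement, not a missing large-$K$ argument. The paper's proof of~(2) is a single line: it bounds the expansion of $df^{q+j}$ by $e^{q\lambda/2+j\lambda/K}<e^{(q+j)\lambda/2}$, using only submultiplicativity, the hypothesis on the $q$-block at $f^m(x)$, and the $e^{\lambda/K}$ bound on $j$ excursion steps. This works cleanly once you realize the $j$ steps being prepended are the excursion steps \emph{ending} at $f^m(x)$, i.e.\ the derivative is taken at the base point $f^{m-j}(x)$ rather than at $x$: one writes
\[
\|df^{q+j}_{|E^u(f^{m-j}(x))}\|\le \|df^{q}_{|E^u(f^{m}(x))}\|\cdot\|df^{j}_{|E^u(f^{m-j}(x))}\|\le e^{q\lambda/2}\cdot e^{j\lambda/K}<e^{(q+j)\lambda/2},
\]
since $f^{m-j}(x),\dots,f^{m-1}(x)\in B(S,\eps_1)$. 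No relation between $m$ and $q$ is used, and no appeal to $K$ being large relative to $m$ is needed. Your attempts all founder because you insist on the base point $x$, which forces the full $m$ excursion steps into the estimate and produces the spurious factor $e^{m\lambda/K}$ you could not absorb; the paper never does this. The workaround you propose (importing $m\le\gamma q$ from part~(1) and enlarging $K$) is not what the paper does and is not needed once the base point is read correctly.
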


\begin{proof}
Let $\w$ be a vector in $E^{u}(x)$. Using that $f^{j}(x)\in B(S,\eps_{1})$ for $0\le j\le m-1$ and also 
 \eqref{eq.epsun}, we deduce
 \begin{eqnarray*}
\|df^{m+q}(x).\w \|&=& \|df^{q}(f^{m}(x))df^{m}(x).\w\|\\
&\le & A^{q}e^{m\frac\lambda{K}}\|\w\|\\
&\le & e^{q\log A+m\frac\lambda{K}}\|\w\|.
\end{eqnarray*}
Now, using that $m>\gamma q$, we obtain
$$
m\left(\frac{\lambda}2-\frac\lambda{K}\right)        > q\left(\log A-\frac{\lambda}2\right)
$$
and so
$$
q\log A+m\frac\lambda{K} < (m+q)\frac{\lambda}2.
$$
This concludes the proof of the first item.

Let us now prove the second item. 
We set $x_{l}:=f^{l}(x)$. 
 $df(x_{j})$ expands every vector in the unstable direction for $0<j\le m$ by a factor lower than $e^{\frac\lambda{K}}<e^{\frac\lambda2}$.  Hence, the expansion in the unstable direction for $df^{q+j}(x)$ is strictly smaller than 
$$e^{q\frac\lambda2+j\frac\lambda{K}}<e^{+(q+j)\frac\lambda2}.$$

\end{proof}

\begin{lemma}
\label{lem-nique-nuh-ze2}
Given a $\lambda$-hyperbolic point $x$, let $y_k$, $q_{k}$, $p_{k}$ and $m_{k}$ be as in \eqref{eq-mi}. Given $0<\eps<1$,  let $0\le s\le q_k$ be the largest  integer such that $\log\|df^{-s}(x)|_{E^{u}(x)}\|\le -s\lambda(1-\eps)$. If 
$$\log\|df^{q_{k}}(y_{k})|_{E^{u}(y_{k})}\|<{\frac{9m_{k}\lambda}{2K}},$$
then 
$$
m_{k}>\frac{2K}{9} s(1-\eps).
$$
\end{lemma}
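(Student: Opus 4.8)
The plan is to exploit the two elementary estimates packaged in Lemma~\ref{lem-nique-nuh-ze1} to convert the information about the excursion $y_k,\dots,z_k$ near $S$ into a lower bound on $m_k$. The setup is that $s$ is the largest integer $\le q_k$ with $\log\|df^{-s}(x)|_{E^u(x)}\|\le -s\lambda(1-\eps)$, so that, reading the derivative cocycle forward along $x_s=f^{-s}(x),\dots,x_1=f^{-1}(x),x$, we have a \emph{lower} bound $\|df^{s}(x_s)|_{E^u(x_s)}\|\ge e^{s\lambda(1-\eps)}$ on the expansion over those $s$ steps. The first thing I would do is clarify the geometry: $x_s$ sits at distance $s$ (backward) from $x$, and the excursion block near $S$ that abuts $y_k=f^{-q_k}(x)$ on its $S$-side is $x_{q_k+1},\dots,x_{p_k-1}\in B(S,\eps_1)$, of length $m_k=p_k-q_k$; and by definition of $s$ we have $s\le q_k$, so $x_s$ lies on the $\Omega_0$-side, at or beyond $y_k$ as seen from $x$.

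Next I would argue by contradiction. Suppose $m_k\le \tfrac{2K}{9}s(1-\eps)$, i.e. $s(1-\eps)\ge \tfrac{9m_k}{2K}$. Combining the hypothesis $\log\|df^{q_k}(y_k)|_{E^u(y_k)}\|<\tfrac{9m_k\lambda}{2K}$ with the lower bound from the definition of $s$ — namely that pushing $E^u$ forward from $x_s$ to $x$ expands by at least $e^{s\lambda(1-\eps)}$, and $y_k$ lies between $x_s$ and $x$ on that orbit segment (with no excursion near $S$ between $y_k$ and $x$ by the definition of $q_k,p_k$, those intermediate points being in $\Omega_0$) — one extracts a contradiction, because the expansion from $x_s$ to $x$ factors through $y_k$, the piece from $y_k$ to $x$ expands by at least $1$ (unstable vectors are never contracted under $df$ in our setting, and in $\Omega_0$ they expand), hence $\|df^{s'}(x_s)|_{E^u}\|\le \|df^{q_k}(y_k)|_{E^u}\|\cdot(\text{stuff}\ge 1)$ forces $\log\|df^{q_k}(y_k)|_{E^u}\|\ge s\lambda(1-\eps)$ where $s'$ is the index of $y_k$ relative to $x_s$; together with $s\lambda(1-\eps)\ge \tfrac{9m_k\lambda}{2K}$ this contradicts the standing hypothesis. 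One must be careful with the bookkeeping of indices here: $s$ is measured from $x$ backward, $y_k$ is $f^{-q_k}(x)$, and the expansion $\|df^{q_k}(y_k)|_{E^u(y_k)}\|$ is exactly the forward expansion over the $q_k$ steps from $y_k$ to $x$, so if $s\le q_k$ the segment $x_s\to y_k$ has length $q_k-s\ge 0$ and $\|df^{s}(x_s)|_{E^u}\| = \|df^{q_k-s}(x_s)|_{E^u}\|^{-1}\cdot(\ldots)$… this is where a clean invocation of item (2) of Lemma~\ref{lem-nique-nuh-ze1} — which says the excursion near $S$ cannot kill more than a controlled fraction of the accumulated expansion — should replace the ad hoc manipulation.

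The cleaner route, which I would ultimately prefer, is: apply Lemma~\ref{lem-nique-nuh-ze1}(2) with the roles $m\rightsquigarrow m_k$ and $q\rightsquigarrow s$ (or a cognate of it adapted to the unstable cocycle read in the appropriate direction), using the defining property of $s$ as the hypothesis ``$\|df^{q}_{|E^u}\|$ controlled'' and concluding a bound on $\|df^{q+j}\|$ that, when pitted against the hypothesis $\log\|df^{q_k}(y_k)|_{E^u(y_k)}\|<\tfrac{9m_k\lambda}{2K}$, yields $m_k>\tfrac{2K}{9}s(1-\eps)$. The main obstacle, as usual in Pesin-type arguments, is purely notational: keeping straight which points are indexed from $x$ versus from $y_k$, which direction the derivative cocycle is being evaluated in (note $\|df^{-s}(x)|_{E^u(x)}\|$ versus $\|df^{q_k}(y_k)|_{E^u(y_k)}\|$ are inverse-type quantities of the same orbit segment), and ensuring the inequality $s\le q_k$ is actually used so that $y_k$ does lie on the segment from $x_s$ to $x$. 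No new analytic input is needed beyond Lemma~\ref{lem-nique-nuh-ze1} and \eqref{eq.epsun}; the factor $\tfrac{9}{2K}$ propagates verbatim from the definitions \eqref{eq.elf}–\eqref{eq.elb} of the truncation lengths $l^f,l^b$ and the constant $C$ of Proposition~\ref{prop-prA-kergo}.
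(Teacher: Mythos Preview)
Your instinct in the second paragraph --- the ``ad hoc manipulation'' you abandon --- is exactly the paper's proof, and once the indices are straightened out it is two lines. You do not need Lemma~\ref{lem-nique-nuh-ze1} at all here; the paper does not invoke it in this proof, and your proposed ``cleaner route'' through that lemma is a detour (item~(2) of Lemma~\ref{lem-nique-nuh-ze1} concerns a block of iterates inside $B(S,\eps_1)$, which is not the configuration of the segment from $x_s$ to $x$).

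Here is the missing bookkeeping. With $x_s=f^{-s}(x)$ and $y_k=f^{-q_k}(x)$, the condition $s\le q_k$ means $x_s=f^{q_k-s}(y_k)$, so $df^{q_k}(y_k)|_{E^u}=df^{s}(x_s)|_{E^u}\circ df^{q_k-s}(y_k)|_{E^u}$. Since $df$ never contracts unstable vectors anywhere on $\Omega$ (this is built into Definition~\ref{def-aaa}, as $k^u\ge 0$), one gets $\|df^{s}(x_s)|_{E^u}\|\le \|df^{q_k}(y_k)|_{E^u}\|<e^{9m_k\lambda/(2K)}$ directly from the hypothesis. On the other hand, the defining inequality $\|df^{-s}(x)|_{E^u(x)}\|\le e^{-s\lambda(1-\eps)}$ gives $\|df^{s}(x_s)|_{E^u}\|\ge e^{s\lambda(1-\eps)}$. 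Comparing the two bounds yields $s\lambda(1-\eps)<9m_k\lambda/(2K)$, which is the conclusion. (The case $s=0$ is trivial.)

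Two side remarks. First, your assertion that there is ``no excursion near $S$ between $y_k$ and $x$'' is false: the segment $x_{q_k},\dots,x_0$ contains all earlier excursion blocks $(q_i,p_i)$ for $i<k$. Fortunately this is irrelevant, because the only fact used is that $df$ expands on $E^u$ everywhere, not just in $\Omega_0$. Second, no contradiction framing is needed; the inequality chain is direct.
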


\begin{proof} If $s=0$, then nothing has to be proved. Assume now that $s\ge 1$. As we are assuming  that  $\log\|df^{q_{k}}(y_{k})_{|E^{u}(y_{k})}\|<\frac{9m_{k}\lambda}{2K}$ and $s\le q_{k}$, then 
$$
\log\|df^{s}(x_s)_{|E^{u}(x_s)}\|<\frac{9m_{k}\lambda}{2K},
$$
because $df$ always expands in the unstable direction. 
Also, by the choice of $s$, we have $\log\|df^{-s}(x)|_{E^{u}(x)}\|\le -s\lambda(1-\eps)$, 
and so 
$$s\lambda(1-\eps)<\frac{9m_{k}\lambda}{2K}.$$
\end{proof}

%
%

\subsection{Proof of Theorem \ref{Theorem B}}
We can now state the key proposition to construct the local unstable manifolds.
\begin{proposition}
\label{prop-key-paststab}
There is a constant $K>0$ such that if $x$ is a $\lambda$-hyperbolic of bounded type, then the sequence of truncations $(i(n))_n$ is bounded. 
\end{proposition}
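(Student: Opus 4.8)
The plan is to show that if $x$ is $\lambda$-hyperbolic of bounded type, then one cannot have truncations occurring at arbitrarily late times $p_j$, i.e. the index $i(n)$ of the deepest effective truncation stays bounded. Recall the mechanism: truncations at $z_k$ cut the graph down to size $l^f(z_k)=Ce^{-\frac{9\lambda}{2K}n^+(z_k)}=Ce^{-\frac{9\lambda}{2K}m_k}$, and between consecutive excursions the one‑step graph transform expands the graph length by a definite factor $e^{\lu+\ls-4\eps}$ per good iterate, so the length recovers up to $2\rho_1$ unless a new truncation intervenes. Thus $i(n)$ fails to be bounded precisely when infinitely many excursions are ``deep'' relative to the amount of good expansion accumulated since the previous truncation — quantitatively, when $m_k$ is large compared to the number $q_k - p_{k-1}$ of good iterates between excursions $k$ and $k-1$ (roughly, when the graph at $y_k$ of length $2l^f(z_k)$ has not grown back past $2l^f(z_{k-1})$ before reaching $z_{k-1}$).

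First I would translate ``truncation at $p_k$ persists'' into an inequality of the form $m_k \gtrsim \frac{2K}{9}(q_k - p_{k-1}) + (\text{contribution of }m_{k-1})$, using that the one‑step transform expands by $e^{\lu+\ls-4\eps}$ per step and that $\lu,\ls$ are the uniform hyperbolicity rates outside $B(S,\eps_1)$; comparing with $\frac{\lambda}{K}$ inside $B(S,\eps_1)$ this is exactly the kind of bookkeeping already packaged in Lemmas~\ref{lem-nique-nuh-ze1} and \ref{lem-nique-nuh-ze2}. Then I would argue by contradiction: suppose $i(n)\to+\infty$, so there is an increasing sequence of indices $k$ at which truncations occur and survive. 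Along this sequence, Lemma~\ref{lem-nique-nuh-ze2} forces $m_k > \frac{2K}{9}s_k(1-\eps)$, where $s_k \le q_k$ is the largest integer with $\log\|df^{-s_k}(x)_{|E^u(x)}\| \le -s_k\lambda(1-\eps)$. The key point is that $s_k$ cannot be much smaller than $q_k$: because $x$ is $\lambda$-hyperbolic, $\frac1n\log\|df^{-n}(x)_{|E^u}\|$ has $\liminf \le -\lambda$, so along a subsequence of times (and in particular at times comparable to $q_k$, up to a bounded factor — here is where ``bounded type'' enters) the contraction rate is essentially $-\lambda$, giving $s_k \ge c\, q_k$ for a fixed $c>0$.

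Combining, the surviving truncations force $m_k \ge c'\frac{2K}{9} q_k$, i.e. the excursion just before scale $q_k$ has length proportional to $q_k$ with a constant that grows linearly in $K$. On the other hand, $m_k$ is the time spent in $B(S,\eps_1)$, during which $df$ expands the unstable direction by at most $e^{\frac\lambda K}$ per step, while the total unstable expansion of $df^{-q_k}(x)$ is controlled by $\lambda$-hyperbolicity; feeding $m_k \ge c'\frac{2K}{9}q_k$ into Lemma~\ref{lem-nique-nuh-ze1}(1) (with $q$ there $\sim q_k - m_k$) shows $\|df^{q_k}_{|E^u(x)}\| \le e^{q_k\lambda/2}$, hence by item~(2) the whole forward orbit up to $q_k$ has subexponential unstable expansion with rate $\le \lambda/2$. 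But this contradicts $\lambda$-hyperbolicity of $x$ once $K$ is chosen large enough (so that $c'\frac{2K}{9} > \gamma$ with $\gamma$ as defined before Lemma~\ref{lem-nique-nuh-ze1}, recalling $\gamma$ stays bounded as $K\to\infty$). Therefore $i(n)$ is bounded, and the final choice of $K$ is dictated by making all these comparisons strict.

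The main obstacle I expect is the role of the bounded‑type hypothesis: $\lambda$-hyperbolicity only gives good contraction/expansion rates along a $\liminf$, i.e. along \emph{some} sparse sequence of times, whereas the truncation indices $p_k$ are imposed by the excursion structure of the orbit and need not land on that good sequence. Bounded type is what lets me pass from a good estimate at time $s_k$ (on the subsequence) to a comparable estimate at the nearby time $q_k$, at the cost of a bounded multiplicative constant — and I then have to make sure this bounded constant can be absorbed by taking $K$ large. Getting the quantifiers in the right order (fix $\lambda$ and the bounded‑type ratio bounds first, then choose $K$, then $\eps,\eps_1,\eps_2$ accordingly, as flagged in Subsection~\ref{subset-rect3}) is the delicate part of the write‑up.
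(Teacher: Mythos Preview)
Your overall strategy and use of Lemmas~\ref{lem-nique-nuh-ze1} and~\ref{lem-nique-nuh-ze2} are in the right spirit and close to the paper's, but the final step contains a genuine gap. You claim that feeding $m_k \ge c'\tfrac{2K}{9}q_k$ into Lemma~\ref{lem-nique-nuh-ze1} yields ``the whole forward orbit up to $q_k$ has subexponential unstable expansion with rate $\le \lambda/2$'' and that this ``contradicts $\lambda$-hyperbolicity of $x$''. Neither assertion is right. With the correct parameters ($m\approx m_k$, $q\approx q_k$, base point near $z_k$; your parenthetical ``$q\sim q_k-m_k$'' can be negative since typically $m_k>\gamma q_k$ with $\gamma>1$), Lemma~\ref{lem-nique-nuh-ze1} only yields $\|df^{-l}(x)_{|E^u}\|\ge e^{-l\lambda/2}$ for $l$ in the range $[q_k,p_k]$, i.e.\ \emph{during} the excursion, not for $l\le q_k$. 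And even granting such bounds, having rate $\ge -\lambda/2$ along many times does not contradict $\liminf_l \tfrac{1}{l}\log\|df^{-l}(x)_{|E^u}\|\le -\lambda$: the $\liminf$ only asserts the existence of a good subsequence and is perfectly compatible with arbitrarily many bad times.

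The contradiction the paper derives is with \emph{bounded type}, not with $\lambda$-hyperbolicity. After obtaining $m_k>\tfrac{2K}{9L}(1-\eps)\,q_k$ (your step is correct here), one must argue: either some term $s_j$ of the good sequence lies in $(q_k,p_k]$, in which case Lemma~\ref{lem-nique-nuh-ze1} forces the rate at $s_j$ to be $\ge -\lambda/2$, incompatible with $s_j$ being a good time (rate $\le -\lambda(1-\eps)$ with $\eps<\tfrac12$); or no $s_j$ lies there, in which case consecutive terms satisfy $s_n\le q_k<p_k<s_{n+1}$ and hence
\[
\frac{s_{n+1}}{s_n}>\frac{p_k}{q_k}=1+\frac{m_k}{q_k}>1+\frac{2K(1-\eps)}{9L},
\]
which exceeds $L$ for $K$ large. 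The paper organizes this as a dichotomy on whether $s<q_k$ or $s=q_k$, the latter case (several $s_j$ possibly inside the excursion) being handled by iterating Lemma~\ref{lem-nique-nuh-ze1}(1) to push past them and reduce to the former.

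So bounded type enters \emph{twice}: once, as you say, to get $s\ge q_k/L$; and a second time to close the contradiction. This second use is essential --- if the argument really closed via $\lambda$-hyperbolicity alone, the bounded type hypothesis would be superfluous, contrary to the formulation of Theorem~\ref{Theorem B}.
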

\begin{proof}
We consider an increasing sequence of integers $(s_{n})_n$ such that $$\disp\lim_{\ninf}\frac1{s_{n}}\log\|df^{-s_{n}}(x)|_{E^{u}(x)}\|\le-\lambda.$$ 
We also consider $0<\varepsilon<\frac13$ and $N$ sufficiently big such that for every $n\ge N$, 
$$\frac1{s_{n}}\log\|df^{-s_{n}}(x)|_{E^{u}(x)}\|<-\lambda(1-\eps)\quad
\text{and}\quad \frac{s_{n+1}}{s_{n}}<L,$$
for some real number $L$ (which exits by definition of $\lambda$-hyperbolic  of bounded type).

\bigskip
\noindent
Let us pick $K$. 
Several conditions on $K$ are stated along the way. First we assume that $K$ is sufficiently big such that $\disp\frac1K<1-\eps$ holds.

\bigskip\noindent
Consider $k$ such that $q_{k}\ge s_{N}$ and let
$0\le s\le q_k$ be as in Lemma~\ref{lem-nique-nuh-ze2}. Note that we necessarily have $s\ge s_{N}$. 
Then, consider the biggest $n$ such that $s_{n}\le s$. Note that  $n\ge N$, as $(s_n)_n$ is an increasing sequence.

Assume now, by contradiction, that 
\begin{equation}
\label{equ-hypo-absu}
\log\|df^{q_{k}}(y_{k})|_{E^{u}(y_k)}\|<\frac{9m_{k}\lambda}{2K}.
\end{equation}
In such case, by Lemma \ref{lem-nique-nuh-ze2} we have 
\begin{equation}
\label{equ-mkK}
\disp m_{k}>\frac{2K}{9} s(1-\eps).
\end{equation}
Now we distinguish the two possible cases:

\begin{enumerate}
\item $s<q_{k}$.
\smallskip

\noindent 
We claim that, in this case, $s_{n+1}$ is bigger than $p_{k}$. Indeed, by definition of $s$ and $n$, we cannot have $s_n<s_{n+1}\le q_k$ . 
 Therefore, $s_{n+1}$ is bigger than $p_{k}$ and we get

$$\frac{s_{n+1}}{s_{n}}>\frac{q_{k}+m_{k}}{s}>1+\frac{2K}{9}(1-\eps)$$
which can be made bigger than $L$ if $K$ is chosen sufficiently big. 

\item $s=q_{k}$.
\smallskip

\noindent It may happen that the contraction of $df^{-q_{k}}(x)$ in the unstable direction is so strong that $s_{n+1}=q_{k}+1$. Actually, this property may endure for several iterates. In other words, there may be several $s_{j}$ between $q_{k}$ and $p_{k}$. Nevertheless, we can show that there will always be a (too) big gap between some of them.

Consider $K$ sufficiently big so that 
$$\disp \frac{2K}9(1-\eps)>\gamma^{2}+2\gamma>\gamma.$$ 
Using  \eqref{equ-mkK} and also that we are considering the case $s=q_k$, we obtain $m_{k}>\gamma q_{k}$. 
 Then, we apply Lemma~\ref{lem-nique-nuh-ze1} with $m=m_{k}$ and $q=q_{k}$, and this shows that $(\gamma+1)q_{k}$ is not one of the $s_{j}$. 
Note that
$$m_{k}>\frac{2K}9(1-\eps)q_{k}>(\gamma^{2}+2\gamma)q_{k},$$
holds if $K$ is sufficiently big (remember that $\gamma$ is bounded). 
This yields $m_{k}-\gamma.q_{k}>\gamma(1+\gamma)q_{k}$. Now, we are back to the case (1) with $(\gamma+1)q_{k}$ instead of $q_{k}$ and $m_{k}-\gamma.q_{k}$ instead of $m_{k}$. 
 Thus,  there are no $s_{j}$'s between $(\gamma+1)q_{k}$ and $q_{k}+m_{k}$.  Observe that this last interval has length bigger than 
$$
\left(\frac{2K}{9}(1-\eps)-1-\gamma\right)q_{k}.
$$
If $s_{j}$ is the biggest term of the sequence $(s_{l})$ smaller than $(\gamma+1)q_{k}$, then $s_{j+1}>p_{k}=m_{k}+q_{k}$ and  we get
$$
\frac{s_{j+1}}{s_{j}}\ge \frac{p_{k}}{(1+\gamma) q_{k}}=\frac{m_{k}+q_{k}}{(1+\gamma)q_{k}}
\ge 
\left(\frac{2K(1-\eps)}{9}+1\right)\left({1+\gamma}\right)^{-1},
$$
which can again be made bigger than $L$ by a convenient choice of $K>0$. 

\end{enumerate}

Both cases yield a contradiction, which means that the assumption \eqref{equ-hypo-absu} does not hold. 
This shows that for every $k$ such that $q_{k}>s_{N}$, 
$$i(p_{k})\le \max\{i(n), n\le s_{N}\}.$$
As truncation only occurs for integers of the form $p_{k}$, the proposition is proved. 
\end{proof}

With the previous notations, the length $l_{n}$ of the graph associated
to
$\Gamma^n_{x}(\wh 0_{x_{n}})$ is smaller than $2\rho_{1}$  and is actually fixed by the finite number of truncations $\disp \max\{i(n), n\le s_{N}\}$, where $N$ appears in the proof of Proposition \ref{prop-key-paststab}. 
This proves that the
sequences of lengths $(l_{n})$ is bounded away from 0. The family of
maps $\Gamma^n_{x}(\wh 0_{x_{n}})$ converges to some map
$$g_{x}:B^u_{x}(0,l(x))\rightarrow B^s_{x}(0,l(x)),$$ with
$g_{x}(0)=0$, where $l(x)$ is such that $0<l(x)\leq l_{n}$ for every $n$.
We define
 $$\CF^u_{\loc}(x)=
\phi_x(\graph(g_x)).$$
Furthermore, Lemma \ref{lem-prA-n+-fini} proves that the backward orbit of
$x$ returns infinitely often to~$\Omega_0$. Therefore, if
$f^{-k}(x)$ belongs to $B(S,\eps_{1})$ we set
$$\CF^u_{\loc}(f^{-k}(x))= f^n(\CF^u_{\loc}(f^{-(n+k)}(x))),$$
where $n$ is the smallest positive integer such that $f^{-(n+k)}(x)$
belongs to $\Omega_{0}$.
Then, we set
$$\CF^u(x)=\bigcup_{n,\ f^{-n}(x)\in
\Omega_0}f^n(\CF^u_{\loc}(f^{-n}(x))).$$
The uniqueness of the map $g_x$ and its construction prove that
$\CF^u(x)$ is an
immersed manifold.

To complete the proof of Theorem~\ref{Theorem B}, we must check that
the $\CC^{1}$-disks that have been constructed are tangent to the
correct spaces.
Let $y$ be in $\CF^u_{\loc}(x)$. By construction of $\CF^u_{\loc}(x)$ we
have for every $n$ such that $f^{-n}(x)\in
\Omega_0$, $f^{-n}(y)\in \CF^u_{\loc}(f^{-n}(x))$. For such an
integer $n$, we pick
some map
 $g_{n,y}:
B^u(0,\rho_1)\rightarrow B^s(0,\rho_1)$ such that $f^{-n}(y)\in
\phi_{f^{-n}(x)}(\graph(g_{n,y}))$ and
$T_{f^{-n}(y)}\phi_{f^{-n}(x)}(\graph(g_{n,y}))=E^u(f^{-n}(y))$.
As the
map $g_x$ is obtained as some unique fixed-point for the graph
transform, the sequence $\Gamma^n_x(g_{n,y})$ converges to
$g_x$. By
$df$-invariance of $E^u$ (until the orbit of $y$ leaves $U$) we must
have
$T_y\CF^u(x)=E^u(y)$.

We also can do the
same construction with $f^{-1}$ to obtain some immersed manifolds
$\CF^s(x)$. Then, $x\in \CF^u(x)\cap \CF^s(x)$. It might be important to have an estimate for
the length of $\CF^u_{\loc}(x)$ or $\CF^u(x)$.  However, we are not able
to
give a lower bound for such estimates.



\begin{thebibliography}{10}

\bibitem{alves-bonatti-viana}
J.~{Alves}, C.~{Bonatti} and M.~{Viana}.
\newblock SRB measures for partially hyperbolic systems whose central direction
  is mostly expanding.
\newblock {\em Inventiones Math.}, 140:351--298, 2000.

\bibitem{alves}
J.~F. Alves.
\newblock S{RB} measures for non-hyperbolic systems with multidimensional
  expansion.
\newblock {\em Ann. Sci. \'Ecole Norm. Sup. (4)}, 33(1):1--32, 2000.

\bibitem{young-benedicks}
M.~{Benedicks} and L.-S. {Young}.
\newblock Sinai-Bowen-Ruelle measures for certain H\'enon maps.
\newblock {\em Invent. Math.}, 112(3):541--576, 1993.

\bibitem{Viana-Bonatti}
C.~{Bonatti} and M.~{Viana}.
\newblock {SRB} measures for partially hyperbolic systems whose central
  direction is mostly contracting.
\newblock {\em Israel Journal of Math.}, 1999.

\bibitem{bowen}
R.~{B}owen.
\newblock {\em {E}quilibrium {S}tates and the {E}rgodic {T}heory of {A}nosov
  {D}iffeomorphisms}, volume 470 of {\em {L}ecture notes in {M}ath.}
\newblock {S}pringer-{V}erlag, 1975.

\bibitem{Herman}
M.~{H}erman.
\newblock Construction de diff\'eomorphismes ergodiques.
\newblock notes non publi\'ees.

\bibitem{Hu}
H.~{Hu}.
\newblock Conditions for the existence of sbr measures for {`Almost Anosov
  Diffeomorphisms'}.
\newblock {\em Transaction of AMS}, 1999.

\bibitem{Young-Hu}
H.~{Hu} and L.-S. {Young}.
\newblock Nonexistence of {{\it SBR}} measures for some diffeomorphisms that
  are {"Almost Anosov"}.
\newblock {\em Ergod. Th. \& Dynam. Sys.}, 15, 1995.

\bibitem{ledrap1}
F.~{L}edrappier and L.-S. {Y}oung.
\newblock {T}he metric entropy of diffeomorphisms {P}art {I}:
  {C}haracterization of measures satisfying {P}esin's entropy formula.
\newblock {\em {A}nnals of {M}athematics}, 122:509--539, 1985.

\bibitem{leplaideur-aaa}
R.~Leplaideur.
\newblock Existence of {SRB} measures for some topologically hyperbolic
  diffeomorphisms.
\newblock {\em Ergodic Theory Dynam. Systems}, 24(4):1199--1225, 2004.

\bibitem{rohlin1}
V.~{R}ohlin.
\newblock {O}n the fundamental ideas of measure theory.
\newblock {\em {A.M.S.T}ranslation}, 10:1--52, 1962.

\bibitem{zweimuller}
R. Zweim\"uller:   
\newblock {I}nvariant measures for general(ized) induced transformations.
\newblock {\em Proc. Amer. Math. Soc.} 133 (2005), 2283--2295. 



\end{thebibliography}

\end{document}